\newtheorem{theorem}{Theorem}[section]
\newtheorem{lemma}[theorem]{Lemma}
\newtheorem{corollary}[theorem]{Corollary}
\theoremstyle{definition}
\newtheorem{remark}[theorem]{Remark}
\numberwithin{equation}{section}
\numberwithin{figure}{section}
\newcommand\Cscr{\mathscr{C}}
\newcommand\Oscr{\mathscr{O}}
\newcommand\B{\mathbb{B}}
\newcommand\C{\mathbb{C}}
\newcommand\D{\overline{\mathbb D}}
\newcommand\CP{\mathbb{CP}}
\renewcommand\D{\mathbb D}
\newcommand\N{\mathbb{N}}
\newcommand\R{\mathbb{R}}
\newcommand\Z{\mathbb{Z}}
\newcommand\igot{\mathfrak{i}}
\renewcommand\igot{\mathfrak{i}}
\newcommand\E{\mathrm{e}}
\renewcommand\imath{\igot}
\newcommand\hra{\hookrightarrow}
\newcommand\wt{\widetilde}
\newcommand\wh{\widehat}
\newcommand\dist{\mathrm{dist}}
\newcommand\length{\mathrm{length}}
\newcommand\Aut{\mathrm{Aut}}
\newcommand\Id{\mathrm{Id}}
\def\dist{\mathrm{dist}}
\def\length{\mathrm{length}}
\numberwithin{equation}{section}
\begin{document}

\title{Embedded complex curves in the affine plane}

\author{Antonio Alarc\'on \; and \; Franc Forstneri{\v c}}

\address{Antonio Alarc\'on, Departamento de Geometr\'{\i}a y Topolog\'{\i}a e Instituto de Matem\'aticas (IMAG), Universidad de Granada, Campus de Fuentenueva s/n, E--18071 Granada, Spain}
\email{alarcon@ugr.es}

\address{Franc Forstneri\v c, Faculty of Mathematics and Physics, University of Ljubljana, and Institute of Mathematics, Physics, and Mechanics, Jadranska 19, 1000 Ljubljana, Slovenia}
\email{franc.forstneric@fmf.uni-lj.si}

\subjclass[2010]{Primary 32E10, 32E30, 32H02; secondary 14H55.}

\keywords{Riemann surface; complex curve; complete holomorphic embedding}

\begin{abstract}
This paper brings several contributions to the classical 
Forster--Bell--Narasimhan conjecture and the 
%
%
Yang problem concerning
the existence of proper, almost proper, and complete injective 
holomorphic immersions of open Riemann surfaces in the affine plane $\C^2$
satisfying interpolation and hitting conditions.
We also show that every compact Riemann surface contains a Cantor set 
whose complement admits a proper holomorphic embedding in $\C^2$,
and every connected domain in $\C^2$ admits complete, 
everywhere dense, injectively immersed complex discs.
The focal point of the paper is a lemma saying for every compact bordered 
Riemann surface, $M$, closed discrete subset $E$
of $\mathring M=M\setminus bM$, and compact subset 
$K\subset \mathring M\setminus E$ without holes in $\mathring M$,
any $\Cscr^1$ embedding $f:M\hra\C^2$ which is holomorphic in $\mathring M$
can be approximated uniformly on $K$ by holomorphic embeddings
$F:M\hra\C^2$ which map $E\cup bM$ out of a given ball and satisfy
some interpolation conditions. 
\end{abstract}

\maketitle

%
%
%
%
\vspace*{-5mm}
\section{Introduction and Main Results}\label{sec:intro} 

%
%
This paper contributes to the following three interesting topics in global complex geometry, having the main focus on the interrelationship between them:
\begin{enumerate}[A]
\item[$\bullet$] The classical Forster--Bell--Narasimhan conjecture
(see  \cite{Forster1970,BellNarasimhan1990}) asking whether every open Riemann surface 
admits a {\em proper} holomorphic embedding in $\C^2$.  
The general case is still open; for positive results, 
see the surveys in \cite[Sections 9.10--9.11]{Forstneric2017E} 
and \cite{DiSalvoWold2022}. 

\item[$\bullet$] Yang's problem \cite{Yang1977AMS,Yang1977JDG} concerning the existence of
{\em complete} bounded complex submanifolds of $\C^n$; see the up-to-date comprehensive survey \cite{Alarcon2022Yang}.

\item[$\bullet$] The existence of {\em dense} holomorphic curves in complex manifolds; see Forstneri\v c and Winkelmann 
\cite{Winkelmann2005,ForstnericWinkelmann2005MRL} and \cite[Section 10]{AlarconForstneric2023Oka1}.
\end{enumerate}

The focal point of the paper is the following lemma, which is proved in Section \ref{sec:mainlemma}. 

%
%
\begin{lemma}\label{lem:main}
Let $M$ be a compact bordered Riemann surface with boundary of class $\Cscr^s$ for 
some $s>1$. Given a $\Cscr^1$ embedding $f:M\hra\C^2$ which is holomorphic on 
$\mathring M=M\setminus bM$, a compact set $K\subset \mathring M$ without holes, 
a compact polynomially convex set $L\subset \C^2$ such that 
$f(M\setminus \mathring K) \cap L=\varnothing$, 
finite sets $A=\{\alpha_1,\ldots,\alpha_l\}\subset \mathring M\setminus K$ 
and $B=\{\beta_1,\ldots,\beta_l\}\subset\C^2\setminus L$, 
a closed discrete set $E\subset \mathring M$ such that $E\cap (A\cup K)=\varnothing$,
and numbers $\epsilon>0$ and $r>0$, there is a holomorphic embedding $F:M\hra\C^2$ 
satisfying the following conditions:
\begin{enumerate}[\rm (a)]
\item $F(bM\cup E)\cap r\overline \B=\varnothing$. (Here, $\B$ denotes the unit ball of $\C^2$.)
\smallskip
\item $F(M\setminus \mathring K) \cap L=\varnothing$.
\smallskip
\item $\sup_{x\in K}|F(x)-f(x)| <\epsilon$.
\smallskip 
\item $F$ agrees with $f$ to a given finite order at a given 
finite set of points in $K\setminus f^{-1}(B)$.
\smallskip
\item $F(\alpha_j)=\beta_j$ for $j=1,\ldots,l$.
\end{enumerate}
\end{lemma}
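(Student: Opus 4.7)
The plan is to build $F$ as the limit of a sequence of holomorphic embeddings $F_n\colon M\hookrightarrow\C^2$ constructed inductively, where $F_0$ solves the problem in the absence of the discrete set $E$ (the bordered-surface case), and each $F_{n+1}$ is obtained from $F_n$ by post-composing with a holomorphic automorphism of $\C^2$ that pushes the image of the next point of $E$ outside $r\overline\B$ while only slightly disturbing the previously arranged data.

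First I would enumerate $E=\{e_1,e_2,\ldots\}$ and fix a normal exhaustion $K=K_0\Subset K_1\Subset K_2\Subset\cdots$ of $\mathring M\setminus E$ by compact sets without holes in $\mathring M$, with $A\subset K_0$, the jet-interpolation points in $K_0$, and $e_n\notin K_n$. I would also fix positive constants $\epsilon_n\downarrow 0$ with $\sum_n\epsilon_n<\epsilon/2$. Using the already established bordered-surface version of the lemma (the case $E=\varnothing$), which combines a Mergelyan-type approximation theorem for holomorphic embeddings of bordered Riemann surfaces into $\C^2$ with a standard pushing-out argument for $bM$ in the style of Forstneri\v c--Wold, I would construct $F_0$ satisfying (b)--(e) of the lemma, with $\|F_0-f\|_K<\epsilon/2$ and $F_0(bM)\cap r\overline\B=\varnothing$, but with no condition yet imposed on $E$.

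For the inductive step, suppose $F_n$ has been built with the prior properties plus $F_n(e_j)\notin r\overline\B$ for $j\leq n$ and $\|F_n-F_{n-1}\|_{K_{n-1}}<\epsilon_n$. If $F_n(e_{n+1})\notin r\overline\B$, set $F_{n+1}=F_n$; otherwise form the compact set
\[
X_n \;=\; L \,\cup\, F_n(K_n\cup bM) \,\cup\, \{\beta_1,\ldots,\beta_l\} \,\cup\, \{F_n(e_j):j\leq n\}.
\]
After a small preliminary perturbation of $F_n$ (whose size is absorbed into $\epsilon_{n+1}$), the set $X_n$ is polynomially convex, using that $K_n\cup bM$ is $\Oscr(\mathring M)$-convex in $\mathring M$ and $L$ is polynomially convex. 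I would then pick a smooth arc $\gamma$ in $\C^2\setminus X_n$ from $F_n(e_{n+1})$ to a point outside $r\overline\B$ such that $X_n\cup\gamma$ stays polynomially convex along the way. The Andersén--Lempert theorem with interpolation in $\C^2$ then yields a holomorphic automorphism $\Phi_{n+1}\in\Aut\C^2$ that is $\epsilon_{n+1}$-close to $\Id$ on $X_n$, fixes each $\beta_j$, agrees with $\Id$ to the prescribed jet orders at the image points of the interpolation nodes, and satisfies $\Phi_{n+1}(F_n(e_{n+1}))\notin r\overline\B$. Setting $F_{n+1}=\Phi_{n+1}\circ F_n$ preserves all previous properties and adds the new one. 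With $\epsilon_n$ decreasing fast enough, the sequence $F_n$ converges uniformly on compacta of $M$ to a holomorphic map $F\colon M\to\C^2$ which is injective and immersive (by stability of these properties on each $K_n$ under small perturbations, a Wold-type argument), hence an embedding; conditions (a)--(e) pass to the limit.

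The principal obstacle will be the polynomial-convexity bookkeeping inside the inductive step: arranging the small perturbation of $F_n$ that makes $X_n$ polynomially convex, and, more delicately, constructing the arc $\gamma$ so that $X_n\cup\gamma$ remains polynomially convex all the way to the exterior of $r\overline\B$. This is where the hypothesis that $K$ (and each $K_n$) is without holes in $\mathring M$, together with the polynomial convexity of $L$, must be used nontrivially, probably via an exposing-chord or tentacle technique to route $\gamma$ around the piece $F_n(M)\cap r\overline\B$ without breaking polynomial convexity.
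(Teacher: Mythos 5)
Your approach diverges from the paper's in a fundamental way and contains a gap that makes the inductive scheme collapse. The first concrete problem is that the set $X_n = L \cup F_n(K_n\cup bM) \cup \{\beta_1,\ldots,\beta_l\} \cup \{F_n(e_j): j\leq n\}$ can never be polynomially convex, no matter how $F_n$ is perturbed: it contains $F_n(bM)$, which is the boundary of the compact embedded complex curve $F_n(M)$, so by the maximum principle $F_n(M)\subset\widehat{F_n(bM)}\subset\widehat{X_n}$, while $F_n(M)\not\subset X_n$ since $K_n\cup bM\subsetneq M$. Hence the Anders\'en--Lempert step cannot be run on $X_n$ as you have defined it. But the deeper obstruction appears if you try to repair this by enlarging $X_n$ to contain all of $F_n(M)$: the point $F_n(e_{n+1})$ you wish to move out of $r\overline\B$ lies \emph{on} the compact set $F_n(M)$, so an automorphism that is uniformly $\epsilon_{n+1}$-close to the identity on $F_n(M)$ cannot push that point a definite distance. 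Conversely, if you excise a neighbourhood of $e_{n+1}$ from the set on which the approximation holds, the automorphism acts with no control on the excised piece of the curve, and when $E$ is infinite (the relevant case, since $E$ may cluster on $bM$) the sequence $F_n$ has no reason to converge on $M$ -- every compact subset of $M$ touching a neighbourhood of $bM$ contains infinitely many of the $e_j$. In short: you cannot simultaneously keep the whole compact curve under control and move a point lying on it, and you cannot afford to lose control of infinitely many pieces of it.

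The paper's proof dissolves this tension before any automorphism is applied. It first exposes a boundary point in each component of $bM$ (\cite[Theorem 2.3]{ForstnericWold2009}) and sends these exposed points to infinity by the rational shear $g$ in \eqref{eq:shear-g}, which turns the image of the surface into a noncompact embedded curve whose boundary consists of curves $\sigma_j$ asymptotic to lines. The set $E'=(g\circ h)(E)$ then becomes closed and discrete in $\C^2$ clustering only on $\Gamma=\bigcup_j\sigma_j$. Now a \emph{single} automorphism $\Phi$ (constructed in Lemma \ref{lem:Wlem1} as a composition $\phi\circ\psi$ of a compactly controlled automorphism and a shear acting near infinity) pushes all of $E'\cup\Gamma$ out of $r\overline\B$ at once while approximating the identity on $L\cup(g\circ h)(K)$, so there is no convergence problem. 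Step (4) of the paper, bringing the points at infinity back to $\C^2$ by composing with the short-time flow $\psi_t$ of a vector field pointing into $\tau(M)$ along arcs around the exposed points, is what finally produces an embedding defined on all of $M$. Your proposal is missing the exposing/send-to-infinity/bring-back mechanism, which is precisely the new idea of the paper, and without it the one-point-at-a-time scheme cannot close.
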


We may assume that the surface $M$ in the lemma is a closed domain 
in a compact Riemann surface, $R$, whose boundary $bM$ 
consists of real analytic Jordan curves, and 
$M$ has no connected component without boundary. 
(See Stout \cite[Theorem 8.1]{Stout1965TAMS} and 
note that any conformal diffeomorphism of $M$ onto such a domain 
is of class $\Cscr^1(M)$; see \cite[Theorem 1.10.10]{AlarconForstnericLopez2021}.) 
A map $F:M\to\C^2$ is said to be holomorphic if it extends to a holomorphic 
map from an open neighbourhood of $M$ in the ambient Riemann surface.
A {\em hole} of a compact set in an open surface 
is a relatively compact connected component of its complement.
See Remark \ref{rem:openproblem} concerning the validity of the hypotheses of the lemma.

Lemma \ref{lem:main} is based on techniques developed by 
Wold \cite{Wold2006IJM,Wold2006MZ} and 
Forstneri\v c and Wold \cite{ForstnericWold2009} for constructing proper 
holomorphic embeddings of bordered Riemann surfaces in $\C^2$.
In their constructions, some boundary points of the given surface $M$
are sent to infinity where they remain at all subsequent steps. 
Our proof of Lemma \ref{lem:main} uses a similar construction 
with additional precision, but the points at infinity are finally brought back to $\C^2$. 
Thus, the main novelty of Lemma \ref{lem:main}, and of the related Lemma \ref{lem:mainbis},
is that we keep the entire Riemann surface $M$ as an embedded 
complex curve in $\C^2$ while pushing its boundary and the discrete set 
$E\subset \mathring M$ (or a countable family of discs in Lemma \ref{lem:mainbis})
arbitrarily far towards infinity. 

%
%
%
These two lemmas lead to new existence, approximation, interpolation, and hitting theorems for complete injectively immersed complex curves 
in $\C^2$, or in domains of $\C^2$, satisfying additional global conditions such as being proper, almost proper, or dense, which are presented in the sequel. 
At the same time, they give a simpler proof of a number of known results.
Our lemmas reduce proofs of these applications 
to formal induction schemes without the need of dealing with the technical issues 
at every step. A similar role is played by the Riemann--Hilbert method (see 
\cite{ForstnericGlobevnik2001MRL,DrinovecForstneric2007DMJ}
and \cite[Chapter 6]{AlarconForstnericLopez2021}), but when the target is a complex surface that technique, unlike ours,  tends to introduce self-intersections which cannot be removed since a generic immersion from a Riemann surface has transverse double points. 

As a first illustration of the constructions that can be carried out using Lemma \ref{lem:main},
we establish the following interpolation result for almost proper injective holomorphic 
immersions of bordered Riemann surfaces in $\C^2$. It is proved in Section \ref{sec:dense}.

%
%
\begin{theorem}\label{th:dense}
Let $M$ be a compact bordered Riemann surface with $\Cscr^s$ boundary for some $s >1$, 
and let $f: M\hra \C^2$ be a $\Cscr^1$ embedding which is holomorphic on $\mathring M$. 
Given a compact set $K\subset\mathring M$ without holes,  a discrete sequence $\alpha_j\in \mathring M\setminus K$ without repetition which clusters only on $bM$, a sequence
$\beta_j\in\C^2$ without repetition, and a number $\epsilon>0$, there is an almost proper injective holomorphic immersion 
$F: \mathring M\hra\C^2$ satisfying the following conditions:
\begin{enumerate}[\rm (i)]
\item $\sup_{x\in K}|F(x)-f(x)|<\epsilon$.
\smallskip 
\item $F$ agrees with $f$ to a given finite order at a given finite set in 
$K\setminus f^{-1}(\{\beta_j: j=1,2,\ldots\})$.\hspace*{-1mm}
\smallskip
\item $F(\alpha_j)=\beta_j$ for all $j=1,2,\ldots$.
\end{enumerate}
In particular, $F$ can be chosen such that $F(\mathring M)$ is every dense in $\C^2$.
 
If the sequence $\beta_j\in\C^2$ is closed and discrete, then there is 
a proper holomorphic embedding $F: \mathring M\hra\C^2$ satisfying 
conditions \rm{(i)-(iii)}.
\end{theorem}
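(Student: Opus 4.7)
The plan is to realize $F$ as the uniform-on-compacts limit on $\mathring M$ of a sequence of holomorphic embeddings $f_n:M\hra\C^2$ produced by iterated application of Lemma \ref{lem:main}, starting from $f_0=f$. At stage $n$ the new map $f_n$ will satisfy $f_n(\alpha_j)=\beta_j$ for $j\leq n$, will push $bM$ together with the tail $\{\alpha_j:j>n\}$ out of a ball of radius $r_n\nearrow\infty$, and will confine the image of a small closed disc $\overline{D_n}\ni\alpha_n$ to a small ball around $\beta_n$. This ``tendril localization'' is the key new ingredient relative to the classical proper-embedding iterations: it is what keeps the limit almost proper when the $\{\beta_j\}$ do not escape to infinity, since otherwise the preimage of a compact set could fail to split into compact connected components.

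\textbf{Inductive construction.} Exhaust $\mathring M$ by compact sets $K\subset K_1\subset K_2\subset\cdots$ without holes, with $\bigcup_n K_n=\mathring M$; since $\{\alpha_j\}$ clusters only on $bM$, we may also arrange $\alpha_n\notin K_n$ for every $n$. Fix summable positive sequences $\epsilon_n,\delta_n$ and radii $r_n\nearrow\infty$. Along with $f_n$, pick inductively pairwise disjoint closed discs $\overline{D_n}\ni\alpha_n$ in $\mathring M\setminus K_n$, and set $K^{(n)}:=K_n\cup\overline{D_1}\cup\cdots\cup\overline{D_{n-1}}$, which is compact and hole-free (adding pairwise disjoint topological discs to a hole-free compactum creates no new relatively compact complementary components). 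At stage $n$, the condition $f_{n-1}(bM)\cap r_{n-1}\overline{\B}=\varnothing$ inherited from stage $n-1$ makes $f_{n-1}^{-1}(r_{n-1}\overline{\B})$ compact in $\mathring M$; by enlarging $K^{(n)}$ and filling in any resulting holes (without touching $\alpha_n$ or $E=\{\alpha_j:j>n\}$, all of which lie in the unbounded complementary component since $f_{n-1}(\alpha_j)\notin r_{n-1}\overline{\B}$ for $j\geq n$) we arrange $f_{n-1}^{-1}(r_{n-1}\overline{\B})\subset\mathring K^{(n)}$. The hypothesis of Lemma \ref{lem:main} then holds with $L:=r_{n-1}\overline{\B}$ (polynomially convex). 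Apply the lemma with this $K^{(n)}$ and $L$, with $A=\{\alpha_n\}$, $B=\{\beta_n\}$, $E$ as above (closed discrete in $\mathring M$ by the clustering hypothesis), target radius $r_n$, approximation parameter $\epsilon_n$ chosen small enough to preserve the previous tendril confinements by continuity, and interpolation of order zero at $\alpha_1,\ldots,\alpha_{n-1}$ together with the prescribed data of (ii). Finally choose $\overline{D_n}$ small enough that $f_n(\overline{D_n})\subset\overline{B(\beta_n,\delta_n)}$.

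\textbf{Limit, almost properness, and variants.} Summability of $\{\epsilon_n\}$ implies $\{f_n\}$ is Cauchy on each $K^{(m)}$ and hence converges uniformly on compact subsets of $\mathring M=\bigcup_m\mathring K^{(m)}$ to a holomorphic map $F:\mathring M\to\C^2$. A standard Hurwitz-type argument, using the global embeddability of each $f_n$ and the separation of the localized tendrils, shows $F$ is an injective holomorphic immersion; conditions (i)--(iii) of the theorem follow directly. For almost properness, fix a compact $L'\subset\C^2$ and choose $N$ with $r_N>\sup_{y\in L'}|y|+2$. For $x\in\mathring M\setminus\bigl(K^{(N+2)}\cup\bigcup_{j\geq N+2}\overline{D_j}\bigr)$, condition (b) of the lemma at step $N+2$ combined with the approximation at later stages forces $|F(x)|>\sup_{y\in L'}|y|$; hence $F^{-1}(L')$ is contained in the disjoint union of the compact sets $K^{(N+2)}$ and $\overline{D_j}$, $j\geq N+2$, and each connected component of $F^{-1}(L')$ lies in a single compact piece and is therefore compact. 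Taking $\{\beta_j\}$ dense in $\C^2$ yields a dense image. If $\{\beta_j\}$ is already closed discrete in $\C^2$, then any $r\overline{\B}$ meets only finitely many tendrils, so $F^{-1}(r\overline{\B})$ is itself compact and $F$ is proper; combined with injectivity and immersivity this makes $F$ a proper embedding.

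\textbf{Principal difficulty.} The technical core is the feedback between the two main induction constraints: $K^{(n)}$ must contain $f_{n-1}^{-1}(r_{n-1}\overline{\B})$ (with no holes and disjoint from $\alpha_n$ and $E$) so that the hypothesis of Lemma \ref{lem:main} is satisfied, and this requires that $bM$ and $E$ were pushed past $r_{n-1}$ at the previous stage, which is itself an output of the lemma. Verifying that the hole-filling step does not engulf $\alpha_n$ or $E$ is where the boundary-pushing condition (a) at the previous stage is used crucially. The tendril-localization step $f_n(\overline{D_n})\subset\overline{B(\beta_n,\delta_n)}$ is what distinguishes the almost-proper construction here from the proper constructions of Wold and Forstneri\v c--Wold and allows $\{\beta_j\}$ to be an arbitrary sequence rather than a closed discrete one.
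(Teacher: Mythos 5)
Your overall strategy (iterating Lemma \ref{lem:main} with an exhaustion, pushing $bM$ and the tail $\{\alpha_j:j>n\}$ past $r_n\nearrow\infty$ while interpolating at $\alpha_1,\ldots,\alpha_n$) is the same as the paper's, but there is a fatal error in the application of the lemma and a consequent gap in the almost-properness argument. You apply Lemma \ref{lem:main} at stage $n$ with $L=r_{n-1}\overline\B$ and $B=\{\beta_n\}$, but the lemma requires $B\subset\C^2\setminus L$, i.e.\ $|\beta_n|>r_{n-1}$. Since $r_n\nearrow\infty$ and the $\beta_j$ are an arbitrary (possibly bounded) sequence, this hypothesis fails for all large $n$; indeed it must, because conclusion (b) ($F(M\setminus\mathring K)\cap L=\varnothing$) would contradict conclusion (e) ($F(\alpha_n)=\beta_n$ with $\alpha_n\notin\mathring K$) whenever $\beta_n\in L$. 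The paper takes $L=\min\{r_{n-1},|\beta_n|/2\}\overline\B$, which always misses $\beta_n$. Once $L$ is shrunk this way, however, conclusion (b) no longer says $|f_n|>r_{n-1}$ on $M\setminus\mathring K^{(n)}$ — it only says $|f_n|>\min\{r_{n-1},|\beta_n|/2\}$, which is useless if $|\beta_n|$ is small — so your almost-properness argument via ``condition (b) at step $N+2$ plus approximation'' collapses. The tendril discs $\overline{D_n}$ do not repair this: $f_n^{-1}(r_n\overline\B)$ contains a neighbourhood of $\alpha_n$ whose size is governed by $f_n$, not by your free choice of $\overline{D_n}$, so excising $\bigcup_{j\ge N+2}\overline{D_j}$ does not recover the needed lower bound on $|F|$. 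What the paper actually does is the opposite order of operations: it applies the lemma with the \emph{previous} compact $K_{n-1}$ (which already contains $f_{n-1}^{-1}(L)$ by the inductive hypothesis (6$_{n-1}$)), and then constructs the \emph{new} $K_n$ large enough that $f_n^{-1}(r_n\overline\B)\subset\mathring K_n$ — using conclusion (a) to guarantee this is possible — while excising the finitely many swallowed $\alpha_i$ ($i>n$) by a maximum-principle argument on the annular components of $K_n'\setminus\mathring K_{n-1}$ and a cut along Jordan arcs. This yields the key invariant $|f_n|>r_n$ on $M\setminus\mathring K_n$ with $r_n\nearrow\infty$, from which almost-properness is immediate.

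There is also a secondary gap in your hole-filling step. The assertion that $\alpha_n$ and $E$ ``lie in the unbounded complementary component since $f_{n-1}(\alpha_j)\notin r_{n-1}\overline\B$'' is a non-sequitur as written: lying outside a set is not the same as lying in its unbounded complementary component. It happens to be true that $f_{n-1}^{-1}(r_{n-1}\overline\B)$ has no holes (because $|f_{n-1}|$ is subharmonic and the maximum principle kills any relatively compact component of the complement on whose boundary $|f_{n-1}|\le r_{n-1}$), but you do not invoke this, and in any case the real difficulty is that after taking the union with $K_n\cup\overline{D_1}\cup\cdots\cup\overline{D_{n-1}}$ and filling in the holes of that \emph{union}, the filled-in set may well engulf some $\alpha_j$ with $j\ge n$. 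You flag this as the ``principal difficulty'' but do not resolve it. The paper's resolution — working in the annuli between $K_{j-1}$ and $K_j'$, using the maximum principle to show $\Omega_i'=\Omega_i\cap f_j^{-1}(\C^2\setminus r\overline\B)$ is path-connected because it contains a neighbourhood of the outer boundary circle, and then slitting along arcs $\gamma_i$ to remove the trapped $\alpha_i$'s — is exactly the missing ingredient.
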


Recall that a continuous map $f: X\to Y$ of topological spaces is said to be  
{\em almost proper} if for every compact set $K\subset Y$ the connected components 
of $f^{-1}(K)$ are all compact. Given a compact bordered Riemann surface $M$ as in 
Theorem \ref{th:dense}, one cannot hit an arbitrary countable subset of $\C^2$ by 
proper holomorphic maps $\mathring M\to\C^2$, but this can be done by almost 
proper maps. In fact, almost proper maps are in some sense the best class of 
holomorphic maps $\mathring M\to\C^2$ that can hit any given countable subset 
of $\C^2$.

The last part of Theorem \ref{th:dense} implies the
following known result concerning the Forster--Bell--Narasimhan Conjecture;
see Globevnik \cite{Globevnik2002} in the case of the disc, and
\cite[Corollaries 1.2 and 1.3]{ForstnericWold2009} by Forstneri\v c and Wold
and \cite[Theorem 1]{KutzschebauchLowWold2009} by Kutzschebauch et al.\ 
for an arbitrary $M$. We state it with additional precision concerning 
approximation and interpolation. 

%
%
\begin{corollary}\label{cor:main}
Given a holomorphic embedding $f:M\hra\C^2$ of a compact bordered 
Riemann surface $M$, a compact set $K\subset \mathring M$ without holes, and 
closed discrete sequences $\alpha_j\in \mathring M$
and $\beta_j\in\C^2$ without repetitions such that $K\cap\{\alpha_j:j\in\N\}=\varnothing$, 
we can approximate $f$ uniformly on $K$ by proper holomorphic embeddings 
$F:\mathring M\hra \C^2$ satisfying $F(\alpha_j)=\beta_j$ for all $j=1,2,\ldots$.
\end{corollary}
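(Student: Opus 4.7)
The plan is to derive Corollary \ref{cor:main} as a direct specialization of the last paragraph of Theorem \ref{th:dense}. No substantive new argument is required; the work consists in matching the hypotheses and observing that the corollary asks for no finite-order interpolation beyond the point-value conditions $F(\alpha_j)=\beta_j$, so condition (ii) of the theorem may be taken with an empty finite set.

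The first step is to verify the setup. The holomorphic embedding $f:M\hra\C^2$ extends holomorphically across $bM$ and is therefore, in particular, a $\Cscr^1$ embedding that is holomorphic on $\mathring M$; the compact set $K\subset\mathring M$ has no holes. Since $\{\alpha_j\}$ is closed and discrete in the non-compact surface $\mathring M$ while $M$ itself is compact, the only accumulation points of the sequence lie on $bM$; together with the hypothesis $K\cap\{\alpha_j:j\in\N\}=\varnothing$, this places the $\alpha_j$'s in $\mathring M\setminus K$ and guarantees that they cluster only on $bM$, exactly as Theorem \ref{th:dense} requires. The sequence $\{\beta_j\}\subset\C^2$ is closed and discrete by assumption.

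With these verifications in hand, I would invoke the last paragraph of Theorem \ref{th:dense} applied to the data $(f,K,\{\alpha_j\},\{\beta_j\},\epsilon)$, taking the empty set as the finite interpolation set appearing in condition (ii). The theorem then delivers a proper holomorphic embedding $F:\mathring M\hra\C^2$ satisfying $\sup_{x\in K}|F(x)-f(x)|<\epsilon$ by (i) and $F(\alpha_j)=\beta_j$ for every $j$ by (iii), which is exactly the content of the corollary.

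Since the argument is a straightforward reduction to Theorem \ref{th:dense}, there is no real obstacle at this level; all the technical difficulty, in particular the simultaneous pushing of $bM$ to infinity while maintaining an embedding and realizing the prescribed hitting values, is absorbed into the proof of that theorem and the underlying Lemma \ref{lem:main}.
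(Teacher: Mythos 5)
Your reduction is correct and matches the paper's intent exactly: the paper states that the last part of Theorem \ref{th:dense} implies Corollary \ref{cor:main}, and the proof given in Section \ref{sec:dense} establishes both in a unified way. Your verification that a closed discrete sequence in $\mathring M$ disjoint from $K$ clusters only on $bM$ (by compactness of $M$), and your observation that condition (ii) may be invoked with an empty finite set, are precisely the points needed to make the specialization go through.
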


We give a unified proof of Theorem \ref{th:dense} and Corollary \ref{cor:main},
based on Lemma \ref{lem:main}, and we supply some details related to  
\cite[Lemma 2.2]{KutzschebauchLowWold2009}; see Remark \ref{rem:gap}. 
The analogous result for algebraic curves in $\C^2$ is a special case of 
\cite[Theorem 1.3]{ForstnericIvarssonKutzschebauchPrezelj2007}; 
see also \cite[Theorem 4.17.1]{Forstneric2017E}.

%
%

%
%
Recall that an immersed submanifold $\varphi: Z\to\R^n$ 
is said to be {\em complete} if the Riemannian metric on $Z$, obtained by 
pulling back the Euclidean metric on $\R^n$ via $\varphi$, is a complete metric; 
equivalently, for every proper path $\gamma:[0,1)\to Z$
the path $\varphi\circ\gamma:[0,1)\to\R^n$ has infinite Euclidean length.
It is obvious that every almost proper immersion $\varphi: Z\to\R^n$ is 
complete, any hence the immersions $F$ provided 
by Theorem \ref{th:dense} are complete. It seems that this gives the 
first examples of a specific bordered Riemann surface, other than the disc, 
admitting a complete nonproper injective holomorphic immersion into $\C^2$.
The construction of complete injectively immersed complex lines $\C\hra\C^2$
was given by the authors in \cite{AlarconForstneric2018PAMS}.

The technique of bringing back the points at infinity also applies in conjunction 
with \cite[Lemma 3.1]{ForstnericWold2013}, 
thereby yielding an analogue of Lemma \ref{lem:main} for circle domains 
with countably many boundary components in the Riemann sphere $\CP^1$; 
see Lemma \ref{lem:mainbis}. This gives a simpler proof of the theorem of 
Forstneri\v c and Wold \cite[Theorem 1.1]{ForstnericWold2013} saying that every 
circle domain in $\CP^1$ embeds properly holomorphically in $\C^2$; 
see Theorem \ref{th:FW}. (For domains with finitely many boundary components, 
this was proved by Globevnik and Stens\o ness \cite{GlobevnikStensones1995}.) 
As indicated in \cite[p.\ 500]{ForstnericWold2013}, 
the analogous result likely holds for circle domains in tori. 
Nothing seems to be known about this problem 
for domains in compact Riemann surfaces of genus $>1$,
where the main problem is to find a suitable initial embedding of the uniformized 
surface into $\C^2$.
On the other hand, in Section \ref{sec:countably-RS} we use  Lemma \ref{lem:main} 
to prove  the following hitting theorem for almost proper injective 
holomorphic immersions in $\C^2$ from domains obtained by removing
countably many pairwise disjoint closed discs from any compact Riemann surface. 

%
%
%
%
\begin{theorem}\label{th:countably-RS}
Let $R$ be a compact Riemann surface and $\Omega=R\setminus\bigcup_{i=0}^\infty D_i$ 
be an open domain in $R$ whose complement is the union of countably many pairwise
disjoint closed discs $D_i$ with $\Cscr^s$ boundaries for some $s>1$.  
Given a $\Cscr^1$ embedding $f:M_k=R\setminus\bigcup_{i=0}^k \mathring D_i\hra\C^2$ 
for some $k\ge 0$ that is holomorphic on the open bordered surface 
$\mathring M_k=R\setminus\bigcup_{i=0}^k D_i$, 
a compact set $K\subset \Omega$, a number $\epsilon>0$, and a countable set $B\subset\C^2$, there is an almost proper (hence complete) injective holomorphic 
immersion $F:\Omega \hra\C^2$ such that
\begin{enumerate}[\rm (i)]
\item $\sup_{x\in K}|F(x)-f(x)|<\epsilon$,
\item $F$ agrees with $f$ to a given finite order at a given finite set of points in 
$\Omega$, and
\item $B\subset F(\Omega)$.
\end{enumerate}
In particular, there exists an almost proper (hence complete) 
injective holomorphic immersion $\Omega\hra\C^2$ with everywhere dense image.
\end{theorem}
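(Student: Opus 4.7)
The plan is to obtain $F$ as the uniform-on-compacts limit over $\Omega$ of a sequence of holomorphic embeddings $F_n : M_{k_n} \hookrightarrow \C^2$ produced by iterating Lemma \ref{lem:main}. First I enumerate $B = \{\beta_1, \beta_2, \ldots\}$ (repeating entries if $B$ is finite), choose a pairwise distinct discrete sequence $\{\alpha_j\}_{j \ge 1} \subset \Omega$ that clusters only on $b\Omega = \bigcup_i bD_i$ and is disjoint from $K$ and from the finite interpolation set prescribed in (ii), and fix summable $\epsilon_n > 0$ with $\sum_n \epsilon_n < \epsilon/2$. Simultaneously I construct a compact exhaustion $K \subset K_0 \subset K_1 \subset \cdots$ of $\Omega$ together with a strictly increasing sequence of integers $k_0 = k < k_1 < k_2 < \cdots$, arranging that each $K_n$ is a finite-topology subsurface containing $\{\alpha_1, \ldots, \alpha_n\}$, has no holes in $\Omega$, and has no holes in $\mathring M_{k_{n+1}}$; the last condition is achieved by choosing $k_{n+1}$ large enough that every complementary component of $K_n$ in $\Omega$ accumulates on some $bD_i$ with $i \le k_{n+1}$. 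I set $F_0 := f$.

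The inductive step $F_n \to F_{n+1}$ is a single application of Lemma \ref{lem:main} to $F_n|_{M_{k_{n+1}}}$ with compact set $K_n$, interpolation $A = \{\alpha_{n+1}\} \mapsto B = \{\beta_{n+1}\}$, tolerance $\epsilon_n$, radius $r_n \nearrow \infty$ so that (a) pushes $bM_{k_{n+1}}$ out of $r_n\overline{\B}$, and finite-order agreement via (d) at $\{\alpha_1, \ldots, \alpha_n\}$ together with the finite set of (ii), preserving all previous interpolations. The polynomially convex compact $L_n$ required by (b) is chosen to maintain the annular avoidance established at earlier steps while missing $\beta_{n+1}$. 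After each step I enlarge the exhaustion so that $K_{n+1}$ contains $\alpha_{n+1}$ and a neighborhood of each $bD_i$ with $i \le k_{n+1}$ on which $|F_{n+1}|$ already exceeds the relevant threshold. By the telescoping estimate $\sum \epsilon_n < \infty$ the sequence $(F_n)$ converges uniformly on compact subsets of $\Omega$ to a holomorphic map $F : \Omega \to \C^2$ satisfying (i), (ii), and (iii). Choosing $\epsilon_n$ small relative to the minimum Euclidean separation of $F_n(K_n)$ and to $\|(dF_n)^{-1}\|$ makes $F$ an injective holomorphic immersion; selecting $B$ countable dense in $\C^2$ yields the density assertion.

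The main obstacle is almost properness of $F$. The tension is that while (a) pushes each new boundary out past $r_n \to \infty$, the interpolation targets $\{\beta_j\}$ may cluster in bounded regions of $\C^2$, so the polynomially convex sets $L_n$ in (b) cannot simply be an exhausting family of balls. My remedy is to propagate across the induction a combination of (a), (b), and (d) guaranteeing the following stabilization: for every $r > 0$ and all sufficiently large $n$, the image $F(\Omega \setminus K_n)$ meets $r\overline{\B}$ only in arbitrarily small neighborhoods of the finitely many $\alpha_j$ whose target $\beta_j$ lies in $r\overline{\B}$ and that have not yet been absorbed into $K_n$. Consequently each connected component of $F^{-1}(r\overline{\B})$ is contained either in the compact $K_n$ or in one of these small neighborhoods, hence is compact in $\Omega$; this delivers almost properness and therefore completeness.
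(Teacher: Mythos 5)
Your iteration scheme — enumerate $B$, push the boundary of $M_{k_n}$ out of expanding balls with Lemma~\ref{lem:main}, interpolate $\alpha_{n+1}\mapsto\beta_{n+1}$, and pass to a limit — is the right framework and matches the paper's strategy. The convergence, injectivity, interpolation, and hitting parts are fine. The genuine gap is the almost properness, which you reduce to an asserted ``stabilization'' property that the stated outputs of Lemma~\ref{lem:main} do not actually deliver.

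Here is the concrete difficulty. Lemma~\ref{lem:main}(a) tells you that $F_{n+1}(bM_{k_{n+1}})$ lies outside $r_n\overline\B$, and (b) that $F_{n+1}(M_{k_{n+1}}\setminus\mathring K_n)$ avoids a chosen polynomially convex $L_n$. To conclude almost properness of the limit $F$ on $\Omega$, you ultimately need an exhaustion of $\Omega$ by compact sets whose boundaries map far out, and this boundary control must persist under the subsequent small perturbations. The natural candidate — the sublevel set $\{x\in M_{k_{n+1}}:|F_{n+1}(x)|\le r_n\}$ — is compact in $\mathring M_{k_{n+1}}$ but may protrude into the discs $D_i$ with $i>k_{n+1}$; it is not a subset of $\Omega$, and you cannot use it as your $K_{n+1}$. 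Conversely, the region in $\Omega$ near $bM_{k_{n+1}}$ where $|F_{n+1}|>r_n$ need not contain any compact boundary curves inside $\Omega$: the discs $D_i$ with $i>k_{n+1}$ are allowed to accumulate on $bD_j$ for $j\le k_{n+1}$, so you cannot simply draw annuli around each $D_j$ inside $\Omega$ arbitrarily close to $bD_j$. (Your phrase ``$K_{n+1}$ contains $\alpha_{n+1}$ and a neighborhood of each $bD_i$'' cannot be taken literally, since $K_{n+1}$ is compact in $\Omega$ while $bD_i\subset b\Omega$, and the intended annular substitute is precisely what is obstructed.) This is the point where the paper invokes a nontrivial topological fact — Lemma~\ref{lem:surrounding} on surrounding systems — which produces finitely many pairwise disjoint smoothly bounded discs $\Delta_i$ with $\bigcup_{i\le k_{n+1}}D_i\subset\bigcup\Delta_i$, $\overline\Delta_i$ inside the open set where $|F_{n+1}|>r_n$, and $\bigcup b\Delta_i\subset\Omega$. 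The resulting cores $c(\Gamma_j)$ exhaust $\Omega$ by relatively compact sets with $\inf_{\Gamma_j}|F|>j-\epsilon$, which is exactly the estimate your proposal declares but does not produce. Without this (or an equivalent geometric input), the ``propagate across the induction a combination of (a), (b), and (d)'' step is a placeholder, not an argument.

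Two smaller points. First, using $E=\{\alpha_j:j>n+1\}$ in Lemma~\ref{lem:main} is problematic: this set is closed discrete in $\Omega$ but clusters on $\bigcup_i bD_i$, part of which lies in $\mathring M_{k_{n+1}}$, so it need not be closed discrete in $\mathring M_{k_{n+1}}$ as the lemma requires. The paper sidesteps this by taking $E=\varnothing$ in the application and handling the future $\alpha_j$'s at later stages. Second, the paper applies Lemma~\ref{lem:main} with an enlarged compact $K'_{j-1}\subset\mathring M_j$ (not required to lie in $\Omega$) that has no holes in $\mathring M_j$, rather than trying to arrange $K_n$ itself to be hole-free in $\mathring M_{k_{n+1}}$; your route via choosing $k_{n+1}$ large can be made to work but adds friction for no gain.
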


%
%

Lemma \ref{lem:main} can also be combined with the method developed by Forstneri\v c 
\cite{Forstneric2022RMI} for constructing complete bounded embedded holomorphic null 
curves in $\C^3$ with Cantor ends, as well as complete bounded minimal surfaces in 
$\R^3$ and some other related types of surfaces with Cantor ends. 
In this way we obtain the following result proved in Section \ref{sec:Cantor}.

%
%
\begin{theorem}\label{th:Cantor}
If $R$ is a compact Riemann surface and $B\subset\C^2$ is a countable subset, 
there exist a Cantor set $C\subset R$ and an almost proper injective 
holomorphic immersion $F: R\setminus C\hra\C^2$ whose image contains $B$. 
If $B$ is closed and discrete in $\C^2$ then $F$ can be chosen to be 
a proper holomorphic embedding.
Hence, every compact Riemann surface contains a Cantor set whose 
complement admits a proper holomorphic embedding in $\C^2$.
\end{theorem}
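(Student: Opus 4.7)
My plan is a dyadic Cantor-type iteration producing nested closed discs $\{D_\sigma\}_\sigma$ in $R$ indexed by finite binary strings, together with holomorphic embeddings of their exteriors, adjusted at each stage by Lemma \ref{lem:main}. For the base step, fix $p\in R$ and invoke the proper embedding theorem for once-punctured compact Riemann surfaces (Kutzschebauch--L\"ow--Wold, recovered in the paper via Corollary \ref{cor:main}) to obtain a proper holomorphic embedding $\phi:R\setminus\{p\}\hookrightarrow\C^2$. Choose a small closed disc $D_\varnothing\subset R$ around $p$ with real-analytic boundary, set $M_0:=R\setminus\mathring D_\varnothing$, and let $f_0:=\phi|_{M_0}$; this is a $\Cscr^1$ embedding of a compact bordered Riemann surface into $\C^2$, holomorphic on an ambient neighbourhood of $M_0$.

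At the inductive step, assume pairwise disjoint closed discs $\{D_\sigma:|\sigma|=n\}$ with real-analytic boundaries of diameter at most $2^{-n}$ in a fixed background metric on $R$, and a holomorphic embedding $f_n:M_n:=R\setminus\bigcup_{|\sigma|=n}\mathring D_\sigma\hra\C^2$. Since $f_n$ extends holomorphically to an ambient neighbourhood of $M_n$, inside each $\mathring D_\sigma$ I choose two disjoint closed discs $D_{\sigma 0},D_{\sigma 1}$ of diameter at most $2^{-n-1}$, placed so close to $bD_\sigma$ that the restriction of this ambient extension is a $\Cscr^1$ embedding $\tilde f_n:M_{n+1}\hra\C^2$ agreeing with $f_n$ on $M_n$ and sending each pants region $\mathring D_\sigma\setminus(D_{\sigma 0}\cup D_{\sigma 1})$ into any prescribed neighbourhood of $f_n(bD_\sigma)$. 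Each component of $\mathring M_{n+1}\setminus M_n$ is such a pants region whose closure in $\mathring M_{n+1}$ fails to contain $bD_{\sigma 0}\cup bD_{\sigma 1}\subset bM_{n+1}$, hence is not relatively compact; consequently $K:=M_n$ has no holes in $\mathring M_{n+1}$. Lemma \ref{lem:main} then applies with $M=M_{n+1}$, $f=\tilde f_n$, $K=M_n$, $E=\varnothing$, a single interpolation pair $(\alpha_n,\beta_n)$ with $\alpha_n\in\mathring M_{n+1}\setminus M_n$ chosen freely, a radius $r_{n+1}$ with $r_n\nearrow\infty$, a polynomially convex $L$ specified below, and accuracy $\epsilon_{n+1}$, yielding $f_{n+1}:M_{n+1}\hra\C^2$ with $|f_{n+1}-f_n|<\epsilon_{n+1}$ on $M_n$, $f_{n+1}(bM_{n+1})\cap r_{n+1}\overline\B=\varnothing$, and $f_{n+1}(\alpha_n)=\beta_n$.

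With $\epsilon_n$ chosen summable and small enough at each stage to preserve injectivity of the limit on $M_{n-1}$, the sequence $(f_n)$ converges uniformly on compact subsets of $R\setminus C=\bigcup_n M_n$ to a holomorphic injective immersion $F$, where $C:=\bigcap_n\bigcup_{|\sigma|=n}D_\sigma$ is a Cantor set by the diameter and disjointness conditions. For almost properness, given $\rho>0$ pick $n$ with $r_n-\sum_{k>n}\epsilon_k>\rho$; then $F(bM_m)\cap\rho\overline\B=\varnothing$ for all $m\ge n$, so a connected component of $F^{-1}(\rho\overline\B)$ cannot cross any $bM_m$ with $m\ge n$, and were it not contained in $M_n$ it would sit inside a nested sequence of shrinking discs $D_\sigma$ and hence reduce to a point of $C$, a contradiction; so it lies in the compact set $M_n\subset R\setminus C$. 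Combined with the interpolation at each stage this also gives $B\subset F(R\setminus C)$. For the proper case with $B$ closed and discrete, reorder so that $|\beta_n|\to\infty$ and additionally invoke condition (b) of Lemma \ref{lem:main} with $L:=r_n\overline\B$; by placing $D_{\sigma 0},D_{\sigma 1}$ even closer to $bD_\sigma$ the extension $\tilde f_n$ already sends the pants regions outside $L$, and the lemma propagates this to $f_{n+1}(M_{n+1}\setminus\mathring M_n)\cap r_n\overline\B=\varnothing$; in the limit $F^{-1}(r_n\overline\B)\subset M_n$, compact in $R\setminus C$, so $F$ is proper. The hard part I expect is not any single invocation of Lemma \ref{lem:main} but the simultaneous sequential calibration of (i) $\epsilon_n$ for limit injectivity, (ii) the placement of $D_{\sigma 0},D_{\sigma 1}$ so close to $bD_\sigma$ that $\tilde f_n$ is an embedding and, in the proper case, automatically respects the ball condition before the lemma is invoked, and (iii) the choice of $\alpha_n$ so that the cumulative interpolations (preserved by condition (d)) are compatible with hitting $\beta_n$ at a depth where $|\beta_n|$ matches the current $r_n$; once this bookkeeping is set up, the rest is a routine Mittag-Leffler limit.
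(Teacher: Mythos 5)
Your overall architecture (a nested family of discs defining a Cantor set, an inductive application of Lemma~\ref{lem:main} with escalating escape radii $r_n$ and calibrated interpolation of the $\beta_n$'s, limiting to an almost proper map) is essentially the paper's, with a binary rather than quaternary disc splitting — that difference is cosmetic. However, there are two genuine gaps.

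\textbf{The base step is not available.} You invoke a proper holomorphic embedding $\phi:R\setminus\{p\}\hookrightarrow\C^2$ for an arbitrary $p$ in an arbitrary compact Riemann surface $R$, citing Kutzschebauch--L{\o}w--Wold and Corollary~\ref{cor:main}. But both of those take as a hypothesis a holomorphic embedding $f:M\hookrightarrow\C^2$ of the compact bordered Riemann surface $M$; they do not produce one. Whether every such $M$ — in particular $M=R\setminus\mathring D$ for a small disc $D$ around an arbitrary point — embeds holomorphically in $\C^2$ is exactly what is \emph{not} known; see Remark~\ref{rem:openproblem}, which emphasises that one cannot in general concentrate the punctures near a prescribed point. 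The paper avoids this by choosing the initial disc $\Omega_0$ (rather than the point) so that $R\setminus\mathring\Omega_0$ is one of the domains known to embed, via the immersion-into-$\CP^2$ argument of Remark~\ref{rem:openproblem}. Your fix is the same: do not fix $p$ in advance; choose $D_\varnothing$ so that $R\setminus\mathring D_\varnothing$ has a holomorphic embedding into $\C^2$, and accept that the Cantor set lands in that particular disc (the theorem does not ask you to prescribe its location).

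\textbf{The extension step across the pants fails.} You want $\tilde f_n:M_{n+1}\hookrightarrow\C^2$ to be the restriction of the ambient holomorphic extension of $f_n$. That extension is defined only on some unspecified neighbourhood $U$ of $M_n$ in $R$ — the proof of Lemma~\ref{lem:main} gives a holomorphic map on \emph{a} neighbourhood but no control on its thickness. At the same time you require $\diam D_{\sigma 0},\diam D_{\sigma 1}\le 2^{-n-1}$ (half the diameter of $D_\sigma$), so the pair-of-pants region $\mathring D_\sigma\setminus(D_{\sigma 0}\cup D_{\sigma 1})$ fills most of $\mathring D_\sigma$ and contains points (near the centre of $D_\sigma$) at distance comparable to $\diam D_\sigma$ from $bD_\sigma$. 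There is no reason these lie in $U$. Your phrase ``placed so close to $bD_\sigma$'' cannot save this: making the new discs sit near $bD_\sigma$ does not shrink the pants region, while making the pants region thin forces at least one of $D_{\sigma 0},D_{\sigma 1}$ to have diameter comparable to that of $D_\sigma$, defeating the Cantor-set shrinking. The paper resolves this with Mergelyan's theorem (\cite[Theorem~16]{FornaessForstnericWold2020}): extend $f_n$ smoothly across the separating arcs $E_0$, $E_0^1$, $E_0^2$ to be chosen, apply Mergelyan on the admissible set $M_n\cup E_0\cup\cdots$ to get a holomorphic embedding on $M_n\cup\overline\Upsilon_0\cup\cdots$, and only then fix the removed strips $\Upsilon$ so thin that the escape condition $|f|>r_n$ and the approximation hold; then invoke Lemma~\ref{lem:main}. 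You need that Mergelyan step; the ambient extension alone is not enough.

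With these two corrections made, the rest of your argument — the ``no holes'' verification for $K=M_n$ in $\mathring M_{n+1}$, the treatment of the polynomially convex $L$ in the proper case, the limiting argument, and the component-chasing proof of almost properness — does follow the paper's proofs of Theorems~\ref{th:dense} and~\ref{th:Cantor} and is sound.
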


%
%

The Cantor sets which arise in the proof of Theorem \ref{th:Cantor}
are small modifications of the standard Cantor set in the plane, and they have 
almost full measure in a surrounding domain. 
The last statement in Theorem  \ref{th:Cantor} 
generalizes a recent result by Di Salvo and Wold
\cite[Theorem 1.1]{DiSalvoWold2022}, who constructed a Cantor set of large measure in 
$\CP^1$ whose complement admits a proper holomorphic embedding in $\C^2$. 
The first examples of Cantor sets in $\CP^1$ whose complements embed properly
in $\C^2$ were given by Orevkov \cite{Orevkov2008}, and 
Di Salvo \cite{DiSalvo2022} showed that Orevkov's construction 
also yields examples having Hausdorff dimension zero.

%
%
So far, we have been talking about (almost) proper injective holomorphic immersions 
in $\C^2$. However, Lemma \ref{lem:main} can also be applied to construct 
complete injectively immersed holomorphic curves in more general domains
in $\C^2$, at the cost of losing control of their conformal structure and in some
case of almost properness. 

%
%
%
To motivate this line of developments, we recall that 
Yang \cite{Yang1977AMS,Yang1977JDG} 
asked in 1977 whether there exist complete bounded complex submanifolds of a 
complex Euclidean space $\C^n$ of dimension $>1$. 
The Yang problem has been a focus of interest in the last decades; 
we refer to the recent survey \cite{Alarcon2022Yang}. It is an open problem 
whether for every compact bordered Riemann surface $M$ as in Lemma \ref{lem:main}
there is a complete holomorphic embedding $\mathring M\hra \C^2$ with bounded image; 
see \cite[Problem 1.5]{AlarconGlobevnik2017}. In fact, given such a Riemann surface 
$M$ other than the closed disc, all known complete holomorphic embeddings 
$\mathring M\hra\C^2$ are proper in $\C^2$ (see \cite[Corollary 1.2]{ForstnericWold2009}) 
or else the complex structure of the embedded surface may change.  
By using Lemma \ref{lem:main}, we construct complete embedded complex curves
%
%
with a given smooth structure in any pseudoconvex Runge domain of $\C^2$ 
as in the following theorem. 
In this case one clearly cannot control the complex structure of the examples. 

%
%
%
%
\begin{theorem}\label{th:pseudoconvex}
Let $D\subset\C^2$ be a pseudoconvex Runge domain and $B$ be a countable 
subset of $D$. On every open Riemann surface $S$ there are a domain $M$,  
which is 
%
%
%
diffeotopic to $S$, and a complete, almost proper, injective holomorphic 
immersion $F: M\hra D$ such that $B\subset F(M)$. If in addition the set $B$ 
is closed in $D$ and discrete, then $F: M\hra D$ can be chosen to be 
a complete proper holomorphic embedding.
\end{theorem}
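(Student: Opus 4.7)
The plan is to prove Theorem \ref{th:pseudoconvex} by an inductive exhaustion of $S$ driven by repeated applications of Lemma \ref{lem:main}. For the preliminary setup I would fix (a) an exhaustion $L_1\Subset L_2\Subset\cdots$ of $D$ by compacta that are polynomially convex in $\C^2$ (available because $D$ is pseudoconvex and Runge, so $\Ocal(\C^2)$-convex compacta in $D$ are cofinal); (b) an exhaustion $S_1\Subset S_2\Subset\cdots$ of $S$ by compact smoothly bordered subsurfaces, with $S_1$ chosen to be a disc; and (c) an enumeration $B=\{b_j:j\in\N\}$ together with a schedule $j\mapsto n(j)\in\N$ such that $b_j\notin L_{n(j)}$ (possible since every point of $D$ lies outside all sufficiently small $L_n$). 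The base case is any holomorphic embedding $F_1:N_1\hra D$ of a small closed disc $N_1\subset S$ into $L_1$.

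The inductive step produces $F_n:N_n\hra D$, with $N_n\subset S$ compact bordered and diffeotopic to $S_n$, out of $F_{n-1}:N_{n-1}\hra D$ as follows. First I would extend $F_{n-1}$ to a $\Cscr^1$ embedding $\tilde f:\wt N\hra D$, holomorphic on $\mathring N_{n-1}$, where $\wt N\supset N_{n-1}$ is a compact bordered Riemann surface diffeotopic to $S_n$ and $\tilde f(\wt N\setminus \mathring N_{n-1})\subset D\setminus L_n$ is chosen close to $bD$. Then I would apply Lemma \ref{lem:main} to $\tilde f$ with $K=N_{n-1}$, $L=L_n$, a finite interpolation set $A=\{\alpha_j\}\subset\wt N\setminus N_{n-1}$ placed densely along each connected component of $\wt N\setminus \mathring N_{n-1}$, corresponding targets $\{\beta_j\}\subset D\setminus L_n$ including those $b_j\in B$ scheduled for step $n$, a closed discrete set $E\subset \mathring{\wt N}\setminus N_{n-1}$ to be sent outside a large ball for completeness, and a large radius $r$. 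Using clause (d) I would also require agreement to high order with $\tilde f$ at the previously fixed preimages $\alpha^{(k)}\in N_{n-1}$ of $b_k$ for $k$ scheduled earlier; since those earlier targets are not part of the step-$n$ interpolation set, clause (d) is applicable and preserves the previous hits. The output is a holomorphic embedding $F:\wt N\hra\C^2$ with $F|_{N_{n-1}}$ $2^{-n}$-close to $\tilde f$, $F(\alpha_j)=\beta_j$, $F(\wt N\setminus \mathring N_{n-1})\cap L_n=\varnothing$, and $F(b\wt N\cup E)\cap r\overline{\B}=\varnothing$.

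The main obstacle is that $F$ lands in $\C^2$, not in $D$; indeed $F$ sends $b\wt N$ to infinity. I would resolve this by passing to a subsurface. Since $F|_{N_{n-1}}$ is uniformly close to $\tilde f(N_{n-1})\subset D$ and $F$ takes prescribed values $\beta_j\in D\setminus L_n$ at the dense net $A$ threading each annular component of $\wt N\setminus N_{n-1}$, the open set $U:=F^{-1}(D)\cap\wt N$ is a neighborhood of $N_{n-1}\cup A$ that extends nontrivially into each collar. A topological argument exploiting the annular shape of the collars, together with a quantitative coupling of $|A|$ with the approximation quality $\epsilon$ in the lemma, lets me extract a compact bordered subsurface $N_n\subset U$ diffeotopic to $S_n$ with $F(bN_n)\subset D\setminus L_n$, and I set $F_n:=F|_{N_n}$. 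This quantitative coupling is the delicate point of the proof.

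Taking $M:=\bigcup_n\mathring N_n\subset S$, each $N_n$ being diffeotopic to $S_n$ makes $M$ diffeotopic to $S$. The summable approximations $\sup_{N_{n-1}}|F_n-F_{n-1}|<2^{-n}$ give a holomorphic limit $F:M\to D$ that is an injective immersion, as a uniform limit of embeddings on each compactum. The exclusion $F_n(N_n\setminus\mathring N_{n-1})\cap L_n=\varnothing$ yields almost properness $F:M\hra D$, from which completeness follows automatically and is reinforced by the divergence of $F(E)$. The scheduled hits together with clause (d) yield $B\subset F(M)$. For the closed-discrete case, strengthening the construction so that every $b_j$ is hit at a unique preimage frozen by clause (d) at every subsequent step promotes almost-properness to properness and the injective immersion to a proper embedding.
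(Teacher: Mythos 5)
Your outline has the right skeleton --- inductive exhaustion of $S$, repeated applications of Lemma~\ref{lem:main}, a schedule for hitting the points of $B$, and extraction of a bordered subsurface $N_n$ inside $F^{-1}(D)$ at each step --- and this is indeed the architecture of the paper's proof. However, there is a genuine gap in the completeness argument that cannot be repaired within your scheme.

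You write that almost properness of $F:M\hookrightarrow D$ ``yields almost properness \ldots from which completeness follows automatically,'' with the discrete set $E$ ``reinforcing'' it. This is false when $D$ is a bounded domain (for example, the ball, which is the central case of the Yang problem motivating this theorem). Completeness is measured against the Euclidean metric of $\C^2$, and a map that is proper, even properly embedded, into a bounded $D$ can certainly be incomplete: the inclusion $\D\ni\zeta\mapsto(\zeta,0)\in\B$ is a proper holomorphic embedding into the ball but induces the Euclidean metric on $\D$, which is not complete. The remark in the introduction that ``every almost proper immersion $\varphi:Z\to\R^n$ is complete'' refers to almost properness into the \emph{ambient Euclidean space}, not into a bounded subdomain. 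Likewise, sending the image of a discrete set $E$ to infinity in $\C^2$ contributes nothing to completeness, because a proper path in $M$ can avoid the countably many points of $E$ entirely while its image has finite length; and in any case, in your own step you then restrict $F$ to $N_n\subset F^{-1}(D)$, so the part of $E$ whose image left $D$ is discarded. The paper supplies the missing ingredient by invoking the Charpentier--Kosi\'{n}ski labyrinth lemma \cite[Lemma~2.4]{CharpentierKosinski2020}: at step $j$ one fixes a polynomially convex compact set $\Gamma\subset D_{d+1}\setminus\overline D_d$ such that every path from $\overline D_d$ to $D\setminus D_{d+1}$ avoiding $\Gamma$ has Euclidean length $>1$, folds $\Gamma$ into the polynomially convex avoidance set $L$ fed to Lemma~\ref{lem:main}, and records the resulting length lower bound as condition (7$_j$). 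Without an analogue of $\Gamma$ your construction does not control intrinsic distances at all, and the limit map need not be complete.

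Two further, more minor, points. First, your ``dense net $A$ threading each annular collar'' together with the ``quantitative coupling'' between $|A|$ and $\epsilon$ is offered as the mechanism for carving out $N_n\subset F^{-1}(D)$ with the right topology; this is vague where it most needs to be precise, and the paper shows it is unnecessary: one arc $\eta$ per step plus the maximum principle (applied to $f_j^{-1}(D_d)$, which contains $K_{j-1}\cup\eta$) already yields the required subsurface, after splitting the induction into the two Euler-characteristic cases $0$ and $-1$ to control topology. Second, your claim that ``strengthening the construction so that every $b_j$ is hit at a unique preimage frozen by clause (d) \ldots promotes almost-properness to properness'' in the closed-discrete case needs the scheduling to be synchronized with the exhaustion of $D$: the paper records this as $\beta_j\in\overline D_{m_j+1}\setminus\overline D_{m_j}$ with $m_j\to\infty$ and uses it in condition (6$_j$); without such a synchronization, hitting a fixed discrete set does not by itself force properness.
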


Theorem \ref{th:pseudoconvex} is proved in Section \ref{sec:pseudoconvex}. 
The special case when $D=\C^2$ and $B=\varnothing$, guaranteeing the existence of properly embedded complex curves in $\C^2$ with arbitrary topology, was established 
in 2013 by Alarc\'on and L\'opez \cite[Theorem 4.5]{AlarconLopez2013JGEA}. 
This showed that there is no topological restriction to the  
Forster--Bell--Narasimhan conjecture. 
(For embeddings in $\C\times \C^*$ and $(\C^*)^2$, see 
Ritter \cite{Ritter2013JGEA,Ritter2018JRAM}, L\'arusson and Ritter \cite{LarussonRitter2014},
and Remark \ref{rem:otherdomains}.)
In the special case when $D=\B$ is the open unit ball and $B\subset\B$ is closed and discrete, Theorem \ref{th:pseudoconvex} was proved by Alarc\'on and Globevnik \cite{AlarconGlobevnik2017}. Likewise, when $D=\B$ or $D=\C^2$ and 
$S$ is of finite topology, it was established by the authors 
in \cite{AlarconForstneric2018PAMS}, except for the almost properness condition. 
For arbitrary pseudoconvex Runge domains $D$ in $\C^2$, Theorem \ref{th:pseudoconvex} also generalizes and simplifies the proofs of some hitting results for
(not necessarily complete) properly embedded complex curves in $D$ due to 
Forstneri\v c, Globevnik, and Stens\o ness \cite{ForstnericGlobevnikStensones1996}
and Alarc\'on \cite{Alarcon2020JAM}. Adapting the arguments in \cite{AlarconLopez2013JGEA,AlarconGlobevnik2017,AlarconForstneric2018PAMS} to the use of labyrinths of compact sets in pseudoconvex Runge domains, constructed by Charpentier and Kosi\'{n}ski in \cite{CharpentierKosinski2020}, leads to a proof of Theorem \ref{th:pseudoconvex} in the case when $B$ is closed in $D$ and discrete, 
or $S$ is finitely connected. The proof of Theorem \ref{th:pseudoconvex} that we give here,  
based on  Lemma \ref{lem:main} and using the labyrinths from \cite{CharpentierKosinski2020}, 
is considerably simpler and provides the general case of the theorem.

%
%
In Section \ref{sec:new} we establish the following analogue of Theorem 
\ref{th:pseudoconvex} in which we do not impose any condition 
whatsoever on the given connected domain in $\C^2$; the cost being not to guarantee 
almost properness of the obtained immersion.

%
%
\begin{theorem}\label{th:main}
Let $X\subset\C^2$ be a connected domain and $B$ be a countable 
subset of $X$. Given an open Riemann surface $S$, there are a domain $M\subset S$,
which is 
%
%
diffeotopic to $S$, and a complete injective holomorphic 
immersion $F: M\hra X$ such that $B\subset F(M)$. In particular, $F$ can be chosen 
to have everywhere dense image in $X$. 
\end{theorem}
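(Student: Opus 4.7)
The plan is to mimic the strategy of Theorem \ref{th:pseudoconvex}, constructing $F$ as the uniform-on-compacts limit of holomorphic embeddings $F_n \colon M_n \hra X$ of a normal exhaustion of a domain $M = \bigcup_n M_n \subset S$ diffeotopic to $S$. The essential difference from Theorem \ref{th:pseudoconvex} is that the connected open set $X$ need not be pseudoconvex or Runge, so the global Charpentier--Kosi\'nski labyrinth inside $X$ is not available; we compensate by proceeding in small local steps that exploit only the openness of $X$. I would first fix a normal exhaustion $S_1 \subset S_2 \subset \cdots$ of $S$ by smoothly bounded Runge compact bordered subsurfaces, enumerate $B=\{b_j\}_{j\ge 1}$, and choose a summable sequence $\epsilon_n>0$. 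The base case is a holomorphic embedding $F_0 \colon M_0 \hra X$ of a compact bordered subsurface $M_0\subset\mathring{S}_1$ diffeotopic to $S_1$ into a small open ball in $X$, obtained by affinely rescaling any holomorphic embedding of a compact bordered Riemann surface into $\C^2$.

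\textbf{Inductive step.}
Given $F_n\colon M_n\hra X$ with $\{b_1,\ldots,b_n\}\subset F_n(M_n)$ and $M_n$ diffeotopic to $S_n$, I would produce $F_{n+1}\colon M_{n+1}\hra X$ with $M_n\subset\mathring{M}_{n+1}$, $M_{n+1}$ diffeotopic to $S_{n+1}$, such that $\sup_{M_n}|F_{n+1}-F_n|<\epsilon_n$, $F_{n+1}$ agrees with $F_n$ to prescribed finite order on a fixed finite subset of $M_n$, $b_{n+1}\in F_{n+1}(M_{n+1})$, and every arc in $M_{n+1}\setminus\mathring{M}_n$ from $bM_n$ to $bM_{n+1}$ has image of Euclidean length at least $1$ under $F_{n+1}$. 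To this end, I would use path-connectedness of $X$ to pick an arc $\gamma\subset X$ from a point of $F_n(bM_n)$ through $b_{n+1}$, made long by winding inside a small ball of $X$; attach matching arcs and handles in $S\setminus\mathring{M}_n$ to form an admissible set $\Lambda\subset S$ diffeotopic to $S_{n+1}$; and extend $F_n$ to a piecewise $\Cscr^1$ map $\tilde{f}\colon\Lambda\to X$ that is holomorphic on $\mathring{M}_n$, parametrizes $\gamma$ on the new arc, and sends a chosen $\alpha_{n+1}\in\Lambda\setminus M_n$ to $b_{n+1}$. Mergelyan's theorem on the open Riemann surface $S$ then furnishes a holomorphic map $g\colon V\to\C^2$ on a Runge open neighborhood $V$ of $\Lambda$ in $S$, uniformly close to $\tilde{f}$ on $\Lambda$ and with $g(\alpha_{n+1})=b_{n+1}$; openness of $X$ forces $g(V')\subset X$ on a shrinkage $V'\Subset V$. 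I would then choose $M_{n+1}\Subset V'$ smoothly bounded, diffeotopic to $S_{n+1}$, containing $M_n$ in its interior, and apply Lemma \ref{lem:main} to $g|_{M_{n+1}}$ with $K$ a thickening of $M_n$ (without holes in $\mathring{M}_{n+1}$), $L=\varnothing$, small $r$, interpolation $\alpha_{n+1}\mapsto b_{n+1}$, and sufficiently small $\epsilon$, obtaining a holomorphic embedding $F_{n+1}\colon M_{n+1}\hra\C^2$. A labyrinth-style refinement inside $M_{n+1}\setminus\mathring{M}_n$---subdividing it further and forcing $F_{n+1}$ to trace long arcs across each sub-annulus---secures the length condition for completeness.

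\textbf{Limit, verification, and main obstacle.}
The summability of $\{\epsilon_n\}$ yields uniform convergence of $F_n$ on compacts of $M=\bigcup_n M_n$ to a holomorphic map $F\colon M\to X$. Interpolation on a fixed countable set of distinct point-pairs and on the derivatives $dF_n$, together with Hurwitz's theorem, guarantee that $F$ is an injective holomorphic immersion; the hitting condition gives $B\subset F(M)$; the length condition forces every divergent path in $M$ to have infinite Euclidean image length in $\C^2$, hence $F$ is complete; and $M$ is diffeotopic to $S$ by level-wise topology. The main obstacle, and the main deviation from the proof of Theorem \ref{th:pseudoconvex}, is ensuring that each application of Lemma \ref{lem:main} keeps the image inside $X$: since $X$ is not Runge in $\C^2$, no polynomially convex $L\subset\C^2\setminus X$ is available to confine the image via conclusion (b), and conclusion (c) only controls the behavior on the compact set $K$, not on the collar $M_{n+1}\setminus K$. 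The workaround is to apply Lemma \ref{lem:main} in the trivial regime $L=\varnothing$ and $r$ small, in which its proof reduces to a small global holomorphic perturbation of $g$ (which already maps $M_{n+1}$ into $X$) with prescribed interpolation, so that $F_{n+1}(M_{n+1})$ stays in any preassigned Euclidean neighborhood of $g(M_{n+1})\subset X$ and therefore inside $X$.
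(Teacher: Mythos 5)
Your proposal correctly identifies the inductive skeleton and correctly diagnoses the essential obstacle (nothing like Runge-ness or pseudoconvexity is available for $X$), but the workaround you propose for that obstacle has a genuine gap, and it is precisely at the point where the paper's argument does something you have not anticipated.

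The paper's key observation is that the image $f_{j-1}(\Sigma)\subset X$ of the current compact bordered surface, after a small correction, is \emph{polynomially convex in} $\C^2$ --- for a disc this is Wermer's theorem, and in general one perturbs via Stolzenberg's theorem as in the preamble of the proof of Lemma \ref{lem:main}. A compact polynomially convex set admits a neighbourhood basis of smoothly bounded strongly pseudoconvex \emph{Runge} domains, so one may choose $U_1\Subset U_2\Subset X$ of that kind. One then places the Charpentier--Kosi\'nski labyrinth $\Gamma$ in $U_2\setminus\overline U_1$ (entirely inside $X$), and applies Lemma \ref{lem:main} with $L$ built from $\Gamma$ and a \emph{large} $r$, so that $f_j(b\Sigma)$ is pushed past $\overline U_2$ while $f_j(\Sigma\setminus\mathring D_{j-1})$ avoids $\Gamma$. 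The Runge-ness of $U_2$ is then used to cut out a sub-disc $D_j$ with $f_j(D_j)\subset X$ and $f_j(bD_j)\cap\overline U_2=\varnothing$, which is what yields the length increment $\dist_{f_j}(D_{j-1},bD_j)>1$. So the boundary is pushed \emph{far}, but only relative to a compact pair $U_1\Subset U_2$ whose closure sits inside $X$ from the start. You never bring in the polynomial convexity of the embedded image, so none of this machinery is available to you, and your attempt to substitute for it does not close.

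Concretely, your workaround has two problems. First, the length condition is left unsecured: you propose ``a labyrinth-style refinement inside $M_{n+1}\setminus\mathring M_n$ --- subdividing it further and forcing $F_{n+1}$ to trace long arcs across each sub-annulus,'' but Lemma \ref{lem:main}(c) controls $F_{n+1}$ only on $K$, not on the collar $M_{n+1}\setminus K$, so there is no mechanism to ``force'' anything about the image of the collar, and forcing one preassigned arc (your winding $\gamma$) to be long does not make \emph{every} path across the annulus long. Second, the ``trivial regime $L=\varnothing$, $r$ small'' genuinely is trivial: if, as you claim, it yields $F_{n+1}$ that is a small global perturbation of $g$ on all of $M_{n+1}$, then $F_{n+1}$ inherits no boundary-avoidance property and no labyrinth-avoidance property, so the estimate $\dist_{F_{n+1}}(M_n,bM_{n+1})>1$ has no source. (Also, that uniform-on-$M$ smallness is a property of the \emph{proof}, not of the \emph{statement} of Lemma \ref{lem:main}, and would need to be argued; condition (c) gives control only on $K$.) You are therefore caught in exactly the dichotomy the polynomial-convexity step resolves: small perturbations keep the image in $X$ but give no completeness, while large perturbations give completeness but may leave $X$. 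The fix is to make ``large'' mean ``past $\overline U_2$'' for a Runge pseudoconvex $U_2\Subset X$, and for that you need the polynomial convexity of $f_{j-1}(\Sigma)$ and the consequent existence of such $U_1\Subset U_2\Subset X$.

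Everything else in your scheme --- the exhaustion, enumeration of $B$, attaching arcs, Mergelyan on the Runge neighbourhood of $\Lambda$, interpolation at $\alpha_{n+1}$, and the Hurwitz-type passage to the limit --- is sound and matches the paper. The missing ingredient is Wermer/Stolzenberg polynomial convexity of the embedded compact surface, which is what converts the non-pseudoconvex global domain $X$ into a workable local Runge situation.
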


All similar results in the literature pertain to special domains in $\C^2$;
see the discussion below Theorem \ref{th:pseudoconvex} and the survey \cite{Alarcon2022Yang}. On the other hand, omitting the injectivity condition 
in dimension two, it was shown by Forstneri\v c and Winkelman 
\cite{Winkelmann2005,ForstnericWinkelmann2005MRL} that every connected 
complex manifold $X$ with $\dim X >1$ admits an immersed holomorphic disc 
$\D=\{\zeta\in \C: |\zeta|<1\}\to X$ with dense image; if $\dim X>2$ then the immersion 
can be chosen injective. Recently the analogous result was obtained by the authors 
for any bordered Riemann surface and for some other classes of open Riemann surfaces \cite[Theorem 10.1]{AlarconForstneric2023Oka1}.

Let $d_{\rm H}$ denote the Hausdorff distance between subsets of Euclidean spaces.
The following corollary, which follows by inspecting the proof of Theorem \ref{th:main}, 
shows that every embedded holomorphic disc is arbitrarily close to a
complete one in the Hausdorff distance.

%
%
\begin{corollary}\label{co:main1}
Given a $\Cscr^1$ embedding $G:\overline\D\hra \C^2$ which is holomorphic on $\D$, 
a compact set $K\subset\D$, and a number $\epsilon>0$, there is a complete injective 
holomorphic immersion $F:\D\to \C^2$ such that $|F-G|<\epsilon$ on $K$ and 
$d_{\rm H}(\overline{F(\D)},G(\overline \D))<\epsilon$.
\end{corollary}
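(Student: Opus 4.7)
The plan is to specialize the proof of Theorem \ref{th:main}, taking $S=\D$ and choosing both the target domain and the countable subset so as to encode the Hausdorff closeness condition. Concretely, set
\[
X:=\bigl\{z\in\C^2\colon \dist(z,G(\overline\D))<\epsilon/2\bigr\},
\]
a connected open domain containing $G(\overline\D)$, and let $B=\{b_j\}_{j\in\N}$ be a dense countable subset of $G(\overline\D)$. It suffices to produce a complete injective holomorphic immersion $F:\D\to X$ with $|F-G|<\epsilon$ on $K$ and $B\subset F(\D)$: indeed, $\overline{F(\D)}\subset\overline X\subset G(\overline\D)^\epsilon$ settles one Hausdorff inequality, while $G(\overline\D)\subset \overline B\subset \overline{F(\D)}$ gives the other.

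Following the recursive scheme in the proof of Theorem \ref{th:main}, I would choose an exhaustion $K\subset M_0\Subset M_1\Subset\cdots\Subset \D$ of $\D$ by closed topological discs with smooth boundary, set $F_0:=G|_{M_0}$, and pick numbers $\epsilon_j>0$ with $\sum_{j\ge 1}\epsilon_j<\epsilon$. At step $j\ge 1$, I would apply Lemma \ref{lem:main} (combined with a labyrinth-of-compacta argument inside $X$, in the spirit of the proof of Theorem \ref{th:pseudoconvex} adapted to the general domain setting of Theorem \ref{th:main}) to obtain a holomorphic embedding $F_j:M_j\hra X$ such that
\begin{enumerate}[\rm (i)]
\item $\sup_{M_{j-1}}|F_j-F_{j-1}|<\epsilon_j$,
\item $F_j(\alpha_j)=b_j$ for some prescribed $\alpha_j\in \mathring M_j\setminus M_{j-1}$, and
\item the geodesic distance, in the pullback of the Euclidean metric under $F_j$, from $M_{j-1}$ to $bM_j$ is at least $1$.
\end{enumerate}
With the $\epsilon_j$ taken sufficiently small compared to the separation of $F_{j-1}(M_{j-1})$ from itself, the sequence $(F_j)$ converges uniformly on compacta of $\D$ to an injective holomorphic limit $F:\D\to\overline X$, and (iii) summed over $j$ forces every divergent path in $\D$ to have infinite Euclidean length, so $F$ is complete. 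The bound $|F-G|\le\sum_{j\ge 1}\epsilon_j<\epsilon$ holds on $K\subset M_0$, and (ii) yields $B\subset F(\D)$, which establishes both required estimates on $d_{\rm H}$ and the approximation condition.

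The main obstacle — and the reason one must inspect the proof of Theorem \ref{th:main} rather than quote its statement — is to keep every intermediate embedding $F_j$ inside the prescribed neighbourhood $X$ of $G(\overline\D)$ while simultaneously enforcing the hitting and length-growth conditions. Lemma \ref{lem:main}, as stated, pushes boundary components arbitrarily far in Euclidean norm and would exit $X$; the remedy, already used in the proof of Theorem \ref{th:main}, is to employ Lemma \ref{lem:main} to route $bM_j$ around successive levels of a labyrinth of compact sets in $X$ whose structure forces intrinsic length to blow up without the image leaving $X$. Once this ingredient is set up with $F_0=G|_{M_0}$ as initial datum, the remaining convergence and injectivity estimates are entirely routine.
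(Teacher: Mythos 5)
Your opening reductions are sound and essentially match the paper's: taking $X$ a small tubular neighbourhood of $G(\overline\D)$, choosing a countable dense subset $B$ of $G(\overline\D)$, and observing that $B\subset F(\D)\subset X$ yields the required Hausdorff estimate is exactly the right idea. (The paper first replaces $G$ by a Mergelyan approximation $f_0$ holomorphic on a neighbourhood of $\overline\D$ before forming the tubular neighbourhood, but that is a minor variation.)

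However, there is a genuine gap in the inductive scheme. You propose to pre-fix an exhaustion $K\subset M_0\Subset M_1\Subset\cdots\Subset\D$ of $\D$ and to produce, at each step $j$, a holomorphic embedding $F_j:M_j\hra X$. This is incompatible with Lemma \ref{lem:main}: condition (a) of the lemma pushes $F_j(bM_j)$ out of an arbitrarily large ball, hence outside the bounded domain $X$, so no application of Lemma \ref{lem:main} on $M_j$ can produce a map of $M_j$ into $X$. Your remedy (``route $bM_j$ around the labyrinth without leaving $X$'') misdescribes what the labyrinth does in the proof of Theorem \ref{th:main}: the labyrinth $\Gamma\subset U_2\setminus\overline U_1$ only forces intrinsic-length growth for paths avoiding it, while containment in $X$ is achieved differently --- Lemma \ref{lem:main} is applied on a slightly larger disc $\Sigma\supset D_{j-1}$, producing $f_j:\Sigma\to\C^2$ whose image in general leaves $X$, and then one \emph{cuts down} to an a-posteriori chosen subdisc $D_j\subset\mathring\Sigma$ (with $D_{j-1}\cup\{\alpha_j\}\subset\mathring D_j$) on which $f_j(D_j)\subset X$ and $f_j(bD_j)\cap\overline U_2=\varnothing$. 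The discs $D_j$ therefore cannot be prescribed in advance; they depend on the constructed maps, and $\Sigma$ must also be chosen close to $D_{j-1}$ for the Mergelyan extension of $f_{j-1}$ to stay inside $U_1\subset X$. There is also a second, related issue you leave unaddressed: once you cannot pre-fix the exhaustion, the iteration yields a complete immersion $h:M\to X$ on some domain $M\Subset 2\D$ that need not equal $\D$. The paper handles this by precomposing with a biholomorphism $\phi:\D\to M$, and to obtain $|h\circ\phi-f_0|<\epsilon/2$ on $K$ one must ensure $M$ is sufficiently close to $\D$ (so that $\phi$ is close to the identity), which is the actual reason one must inspect, rather than merely quote, the proof of Theorem \ref{th:main}. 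Your proposal does not supply either the a-posteriori choice of domains or the final change of variable.
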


We do not know whether the injective immersion $F$ in Corollary \ref{co:main1} can be 
chosen to extend continuously to $\overline\D$ or to satisfy $|F-G|<\epsilon$ on $\D$. 
In particular, it remains an open question whether $F$ can be chosen such 
that $F(\D)$ is bounded by a Jordan curve. 
All these tasks can be carried out if one allows the map $F$ to have double points; see
\cite{AlarconDrinovecForstnericLopez2015PLMS,AlarconDrinovecForstnericLopez2019TAMS,Vrhovnik2023}.

Insisting on the almost properness condition, we also establish the following 
result which is obtained by a slight modification of the proof of Theorem \ref{th:main}. 
Again, we do not impose any condition whatsoever on the given connected domain 
$X$ in $\C^2$.

%
%
\begin{theorem}\label{th:main2}
Let $X\subset\C^2$ be a connected domain and $B$ be a countable 
subset of $X$. On every open Riemann surface $S$ there exists a connected, relatively compact 
domain $M$ such that $M$ has the same topological genus as $S$ and there is a complete almost proper, injective holomorphic immersion 
$F:M\to X$ with $B\subset F(M)$. In particular, $F$ can be chosen to have everywhere dense image in $X$.
\end{theorem}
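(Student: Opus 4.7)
The argument is a minor variation of the inductive construction used for Theorem \ref{th:main}; the new ingredient is an additional clause at each step forcing almost properness of the limit. Since $M\subset S$ must be relatively compact and have the same topological genus as $S$, we may assume $S$ has finite topological genus $g$. Fix a compact bordered Riemann surface $\overline M\subset S$ of genus $g$ with $\Cscr^s$ boundary, set $M=\mathring M$, and start from any $\Cscr^1$ embedding $f_0:\overline M\hookrightarrow X$ holomorphic on $M$ (a small rescaled copy of any embedding $\overline M\hookrightarrow\C^2$ fitting inside a ball of $X$). Enumerate $B=\{b_j\}_{j\ge 1}$, fix a discrete sequence $\{\alpha_j\}\subset M$ accumulating on $bM$, and choose compact exhaustions $K_1\Subset K_2\Subset\cdots$ of $M$ by sets without holes, with $\alpha_j\in M\setminus K_j$, together with $L_1\Subset L_2\Subset\cdots$ of $X$ adapted to the enumeration of $B$ so that $b_j\in L_n$ if and only if $j\le n$.

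\emph{Induction.} We construct $\Cscr^1$ embeddings $f_n:\overline M\hookrightarrow X$ holomorphic on $M$ satisfying
\begin{enumerate}
\item[(1)] $\sup_{K_n}|f_n-f_{n-1}|<\epsilon_n$ with $\sum_n\epsilon_n<\infty$;
\item[(2)] $f_n$ agrees with $f_{n-1}$ to prescribed finite order at a fixed finite set in $M$;
\item[(3)] $f_n(\alpha_j)=b_j$ for $j\le n$;
\item[(4)] $f_n(M\setminus\mathring K_n)\cap L_n\subset\{b_1,\ldots,b_n\}$;
\item[(5)] every arc in the annulus $K_n\setminus K_{n-1}$ joining its two boundary curves has Euclidean length at least $1$ under $f_n$.
\end{enumerate}
At each step we apply Lemma \ref{lem:main} to $f_{n-1}$ with $K=K_n$, interpolation data $\alpha_j\mapsto b_j$ for $j\le n$ together with the jet conditions from (2), a polynomially convex compact $L\subset\C^2$ obtained as a tight polynomial hull of $f_{n-1}(K_n)\cup L_n$ with the finite set $\{b_1,\ldots,b_n\}$ removed, a discrete set $E$ placed in $K_n\setminus K_{n-1}$ with the ball radius $r$ large enough to stretch the Euclidean lengths across that annulus and realize (5), and a tolerance $\epsilon_n$ small enough to guarantee $f_n(\overline M)\subset X$.

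\emph{Passage to the limit.} The sequence $f_n$ converges uniformly on compact subsets of $M$ to a holomorphic map $F:M\to X$; condition (2) combined with the standard general-position argument makes $F$ an injective immersion, while (1), (3) give $B\subset F(M)$. Completeness follows by summing (5) along any proper path in $M$. For almost properness, given a compact $L\subset X$ pick $n_0$ with $L\subset L_{n_0}$; then for every $n\ge n_0$ clause (4) forces $F^{-1}(L)\cap (M\setminus K_n)\subset\{\alpha_j:j\le n\}$, so every connected component of $F^{-1}(L)$ is either contained in the compact set $K_{n_0}$ or is a single interpolation point, hence compact. Density of $F(M)$ in $X$ is obtained by choosing $B$ to be a countable dense subset of $X$.

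\emph{Main obstacle.} The delicate point is the selection of the polynomially convex compact $L\subset\C^2$ realizing (4) while leaving enough room for $f_n(M\setminus\mathring K_n)$ to lie inside $X$. Because $X$ carries no pseudoconvexity or Runge hypothesis, the polynomial hull of a compact subset of $X$ need not remain in $X$, and so one cannot ensure $\C^2\setminus L\subset X$ in one shot. To overcome this, each inductive step is broken into finitely many smaller pushes, moving the image of $M\setminus\mathring K_n$ through a connected "corridor" of $X$ by repeated applications of Lemma \ref{lem:main} with tiny tolerances and with $L$ taken as thin polynomially convex neighbourhoods of $f_{n-1}(K_n)\cup L_n$ arranged along the corridor. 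This is where the "slight modification" of the proof of Theorem \ref{th:main} comes in.
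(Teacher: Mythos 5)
Your proposal has a genuine gap in its global structure: you fix $M$ once and for all as the interior of a compact bordered Riemann surface and run an exhaustion $K_1\Subset K_2\Subset\cdots$ of that fixed $M$, but this cannot work for an arbitrary connected domain $X\subset\C^2$. Lemma~\ref{lem:main} pushes $F(bM\cup E)$ out of a Euclidean ball $r\overline\B$; when $X$ is bounded (or shaped so that its complement gets in the way), this necessarily pushes part of the image of the collar $M\setminus K_n$ out of $X$, and nothing in the lemma lets you ensure $f_n(\overline M)\subset X$ for a fixed $\overline M$. The paper's proof avoids this by growing the domain: it builds an increasing chain $D_0\Subset D_1\Subset\cdots$ of compact domains inside $S$ (planar in the reduced case $S=2\D$), applies Lemma~\ref{lem:main} on a larger $\Sigma\supset D_{j-1}$, and then trims $D_j\subset\mathring\Sigma$ to a connected domain whose image does lie in $X$ and whose boundary stays out of both $K_j$ and $\overline U_2$. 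Because $X$ is neither pseudoconvex nor Runge, this trimming may create additional boundary components; the paper says so explicitly ("possibly $D_j$ has more boundary components than $D_{j-1}$"), and this is exactly why the theorem controls only the genus of $M=\bigcup_jD_j$, not its ends, as the remark after the statement emphasizes.

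Your "corridor" fix in the Main Obstacle paragraph does not repair this: Lemma~\ref{lem:main} gives no control over which component of $\C^2\setminus r\overline\B$ the boundary lands in, and no matter how small each push is, the whole fixed annulus $M\setminus\mathring K_n$ must map into $X\setminus L_n$ while joining the (nearly unchanged) inner circles to images far from $L_n$ in $X$; this embedding problem for a fixed annulus has a topological obstruction in general, which disappears precisely when one is allowed to discard parts of the domain — that is, when one lets $D_j$ grow and acquire new boundary components. So your condition (4), while superficially analogous to the paper's (vi$_j$) $f_j(bD_j)\cap K_j=\varnothing$, is both stronger than what Lemma~\ref{lem:main} yields (it controls the whole collar, not just the boundary) and, more importantly, incompatible with keeping $f_n(\overline M)\subset X$ on a fixed surface. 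A secondary issue is the opening reduction "we may assume $S$ has finite genus": the theorem is stated for arbitrary open Riemann surfaces, including those of infinite genus, and the claim that $M$ has the same genus as $S$ forces $M$ to have infinite genus too, so $M$ cannot in general be the interior of a compact bordered Riemann surface. The paper handles the general case by combining the planar argument with the topology-building step from the proof of Theorem~\ref{th:pseudoconvex} (adding handles when the Euler characteristic of $S_j\setminus\mathring S_{j-1}$ is $-1$).
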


Our method of proof does not allow to ensure that the domain $M$ in Theorem \ref{th:main2} is homeomorphic to the given open Riemann surface $S$. In particular, we cannot control its ends set, which could be more complicated than that of $S$.

%
%
\begin{remark}[On completeness]\label{rem:metric}
By a minor modification of the proofs, using that any two metrics on a compact space are comparable, we can ensure that the injective holomorphic immersions $F$ obtained in Theorems \ref{th:dense}, \ref{th:countably-RS}, \ref{th:Cantor}, \ref{th:pseudoconvex}, \ref{th:main}, and \ref{th:main2} are complete with respect to any given Riemannian metric (not necessarily complete or the Euclidean one) in the target domain $\C^2$, $D\subset\C^2$, or $X\subset \C^2$, respectively. 
\end{remark}

%
%
\begin{remark}[On the hypotheses in Lemma \ref{lem:main}] \label{rem:openproblem}
It is not known whether every compact bordered Riemann surface embeds 
holomorphically in $\C^2$. Here is a way to obtain such surfaces. 
Any compact Riemann surface, $R$, admits a holomorphic immersion $f:R\to \CP^2$ 
in the projective plane with finitely many simple double points $f(a_j)=f(b_j)$, 
where $a_j\ne b_j$ for $j=1,\ldots,m$ (see Griffiths and Harris \cite{GriffithsHarris1994}). 
Given a complex line $\Lambda\subset\CP^2$, the punctured Riemann surface  
$R'=R\setminus (f^{-1}(\Lambda) \cup\{b_1,\ldots,b_m\})$ is injectively immersed 
in $\CP^2\setminus \Lambda \cong \C^2$, and hence any compact 
domain in $R'$ is embedded in $\C^2$. 
There is considerable freedom in the above choices, showing that most
domains with smooth boundary in any compact Riemann surface satisfy 
Lemma \ref{lem:main}. However, we are not aware of suitable results in the literature
on controlling the location of double points in an immersed compact 
Riemann surface in $\CP^2$, and it seems an open problem whether one could put all 
punctures in the above construction in an arbitrarily small disc around any given point of $R$.
If this were true, then one could embed the interior of any finite bordered Riemann surface 
properly holomorphically in $\C^2$. 
\end{remark}

\section{Proof of Lemma \ref{lem:main}}\label{sec:mainlemma}

The proof involves four main steps: 
(1) using a holomorphic automorphism of $\C^2$ to satisfy the interpolation conditions in (e),
(2) exposing and sending to infinity a point in each boundary component of $M$,
(3) pushing the boundary $bM$ and the discrete set $E$ out of a given ball 
by a holomorphic automorphism of $\C^2$ (see condition (a)), 
and (4) bringing back the points at infinity.  
The first three steps are obtained by following and augmenting the proofs of 
\cite[Corollary 1.2]{ForstnericWold2009} and \cite[Lemma 2.2]{KutzschebauchLowWold2009}, 
while the last step uses a new idea. For the sake of readability we give a complete exposition, 
beginning with preliminaries.

%
%
Denote the coordinates on $\C^2$ by $z=(z_1,z_2)$, and let $\pi_i:\C^2\to \C$ 
for $i=1,2$ denote the projection $\pi_i(z_1,z_2)=z_i$. 
Let $\D$ be the open unit disc in $\C$ and $\B$ the open unit ball in $\C^2$.

Let $f:M\hra \C^2$ and the sets $B,L\subset\C^2$ be as in 
Lemma \ref{lem:main}, and let $c_1,\ldots, c_{l'} \in K\setminus f^{-1}(B)$ denote 
the points at which we must fulfil the interpolation condition (d).
By Mergelyan theorem (see \cite[Theorem 16]{FornaessForstnericWold2020}) we can 
approximate $f$ in the $\Cscr^1(M)$ topology by a holomorphic map 
$\tilde f:U\to\C^2$ from an open neighbourhood $U\subset R$ of $M$ which
agrees with $f$ to a given finite order $k$ at every point $c_1,\ldots,c_{l'}$. 
Assuming that the approximation is close enough and up to shrinking $U$ around $M$, 
we may assume that $\tilde f:U\hra\C^2$ is a holomorphic embedding satisfying 
$\tilde f(U \setminus \mathring K) \cap L=\varnothing$.  
By a standard transversality argument we can also ensure that 
$B\cap \tilde f(U) = \varnothing$. 
We replace $f$ by $\tilde f$ and drop the tilde. 

Finally, by a small perturbation of the map $f$, keeping the above conditions in place,
we can ensure that the set $L\cup f(M)$ is polynomially convex in $\C^2$. 
Indeed, by Stolzenberg's theorem \cite{Stolzenberg1963AM}
the polynomial hull of $L\cup f(bM)$ is the union of this set with complex
curves having their boundaries in $L\cup f(bM)$, and we can arrange 
that there are no such curves besides $f(M)$. 
Here is an explicit way of doing this. Choose a compact domain 
$M'\subset U$ containing $M$ in its interior such that $M$ is a strong deformation retract
of $M'$. We may assume that the function $f_1=\pi_1\circ f \in \Oscr(U)$ is nonconstant 
on each component of $U$. 
We approximate $f_2=\pi_2\circ f$ on $M$ (with interpolation at the points $c_1,\ldots,c_{l'}$) 
by a smooth function $\hat f_2$ on $M'$ which is holomorphic on $\mathring M'$ 
and does not extend holomorphically across any boundary point of $M'$. 
Set $\hat f=(f_1,\hat f_2)$. Take a point $p\in bM'$ at which $df_1(p)\ne 0$. 
Locally at the image point $q=\hat f(p)\in\C^2$ 
we can represent the complex curve $\Sigma'=\hat f (M')$ with smooth boundary 
$b\Sigma'=\hat f(bM')$ as a graph over 
the first coordinate such that the graphing function is holomorphic on the local projection
of $\Sigma'$ but does not extend holomorphically past the point $q_1=\pi_1(q)=f_1(p)$. 
It follows that $\Sigma'$ is not contained in any complex curve containing 
$q$ in the interior, since such a curve would provide a holomorphic extension of the 
graphing function to a neighbourhood of $q_1$. We claim that $L\cup \Sigma'$ 
is polynomially convex. Indeed, by Stolzenberg \cite{Stolzenberg1963AM}
the set $\wh{L\cup b\Sigma'}\setminus (L\cup b\Sigma')$ 
is a pure one-dimensional closed complex subvariety which is closed in 
$\C^2\setminus (L\cup b\Sigma')$. If this subvariety has an irreducible component
$\Lambda$ which is not contained in $\Sigma'$, then $\overline \Lambda$
must contain a connected component $C$ of $b\Sigma'$,
and the boundary uniqueness theorem 
(see Chirka \cite[Proposition 1, p.\ 258]{Chirka1989}) shows that 
$\Sigma' \cup \Lambda$ is a complex curve in $\C^2$ containing $C$,
contradicting the choice of $\hat f$. This proves the claim. 
Since $M$ is holomorphically convex in $M'$, Rossi's 
local maximum modulus principle  (see Rosay \cite{Rosay2006} for a simple proof) 
implies that $L\cup \hat f(M)$ is also polynomially convex.
Furthermore, for every compact set $K\subset \mathring M$ without holes 
such that $\hat f(M\setminus \mathring K)\cap L=\varnothing$ the set $L\cup \hat f(K)$
is polynomially convex. We now replace $f$ by a map satisfying all the stated conditions.

%
%
\medskip\noindent 
{\em (1) Fulfilling condition (e) in the lemma.} 
We have arranged above that the set $L\cup f(M)$ is polynomially convex in $\C^2$. 
Since $K$ has no holes in $\mathring M$ and $f(M\setminus \mathring K)\cap L=\varnothing$, 
the set 
\begin{equation}\label{eq:Lprime}
	L':= L\cup f(K)
\end{equation} 
is also polynomially convex (see the argument above). Recall that 
\[
	A=\{\alpha_1,\ldots,\alpha_l\}\subset \mathring M\setminus K
	\ \  \text{and}
	\ \ B=\{\beta_1,\ldots,\beta_l\}\subset\C^2\setminus L'.
\]
By the choice of $f$ we have that $B\cap f(U) = \varnothing$. 
By \cite[Proposition 4.15.3]{Forstneric2017E} there is a holomorphic automorphism
$\Phi\in\Aut(\C^2)$ which approximates the identity map on 
$L'$, it agrees with the identity to a given finite order $k$ at the points 
$f(c_1),\ldots,f(c_{l'}) \in f(K)\subset L'$, and it satisfies 
$\Phi(f(\alpha_j))=\beta_j$ for $j=1\ldots,l$. Replacing $f$ by $\Phi\circ f$ 
we may thus assume that $f$ fulfills condition (e) in the lemma
and the other properties remain in place.
We now add to $A$ the given finite set of points in $\mathring M$
at which we shall interpolate the map to a given finite order in the 
subsequent steps of the proof, and we suitably enlarge the set $K \subset \mathring M$ 
so that it has no holes and contains this new bigger set $A$,
while the discrete set $E\subset \mathring M$ remains in $\mathring M\setminus (A\cup K)$. 
This will ensure that the final map $F$ will satisfy condition (e).

%
%
\medskip\noindent 
{\em (2) Exposing boundary points.} 
We follow the exposition in \cite[Sect.\ 4]{ForstnericWold2009}. 
On each boundary curve $C_j\subset bM$ we choose a point $a_j$ 
and attach to $M$ a smooth embedded arc $\gamma_j\subset U$ such that 
$\gamma_j\cap M=\{a_j\}$, the intersection of $bM$ and $\gamma_j$ 
is transverse at $a_j$, and the arcs $\gamma_1,\ldots,\gamma_m$ 
are pairwise disjoint. Let $b_j$ denote the other endpoint of $\gamma_j$.
On the image side, we choose smoothly embedded pairwise disjoint arcs 
$
	\lambda_1,\ldots,\lambda_m\subset \C^2\setminus L',
$ 
where $L'$ is given by \eqref{eq:Lprime}, such that for every $j=1,\ldots, m$ we have that 
$\lambda_j\cap f(M)=f(a_j)$, $\lambda_j$ agrees with $f(\gamma_j)$ near the endpoint $f(a_j)$,
the other endpoint $p_j$ of $\lambda_j$ satisfies
\[
	|\pi_2(p_j)|> \sup\{|\pi_2(z)| : z\in L'\},
\]
the set $f(M) \cup \bigcup_{i=1}^m \lambda_i$ intersects the complex line
\begin{equation}\label{eq:Lambdaj}
	\Lambda_j=\pi_2^{-1}(\pi_2(p_j)) = \C\times \{\pi_2(p_j)\}
\end{equation} 
only at the point $p_j$, and the tangent vector to $\lambda_j$ at $p_j$ has nonvanishing 
second component. (See \cite[Fig.\ 2, p.\ 109]{ForstnericWold2009} where the 
projection $\pi_1$ is used in place of $\pi_2$.) 

We now modify $f$, keeping it fixed on a neighbourhood $U_1\subset U$ 
of $M$ and extending it to a smooth diffeomorphism $\gamma_j\to\lambda_j$ for every 
$j=1,\ldots,m$ such that $f(b_j)=p_j$. Set 
\[
	S=M\cup \bigcup_{j=1}^m \gamma_j \subset R.
\]
Applying Mergelyan theorem (see \cite[Theorem 16]{FornaessForstnericWold2020}), 
we can approximate $f$ as closely as desired in $\Cscr^1(S)$ by a holomorphic map 
$\tilde f:V\to\C^2$ on an open neighbourhood $V\subset R$ 
of $S$ such that $\tilde f$ agrees with $f$ to a given finite order $k$ at the points  
in the finite set $A \subset \mathring M$ defined in the previous step, 
and $\tilde f$ agrees with $f$ to the second order at the endpoints $a_j$ and $b_j$ 
of $\gamma_j$ for $j=1,\ldots,m$. 
If the approximation is close enough and up to shrinking $V$ around $S$, 
the map $\tilde f:V\hra\C^2$ is a holomorphic embedding satisfying 
\begin{equation}\label{eq:tildef-L}
	 \tilde f(V \setminus \mathring K) \cap L =\varnothing. 
\end{equation}
Furthermore, for every $j=1,\ldots,m$ the complex line $\Lambda_j$ \eqref{eq:Lambdaj} 
intersects the embedded complex curve $\tilde f(V)$ only at the point $p_j=\tilde f(b_j)=f(b_j)$ 
and the intersection is transverse.

We have now arrived at the main point of the exposing of points technique.
By \cite[Theorem 2.3]{ForstnericWold2009} there is a conformal diffeomorphism
\begin{equation}\label{eq:tau}
	\tau:M\to \tau(M)\subset V
\end{equation}
such that for every $j=1,\ldots, m$ we have that 
$\tau(a_j)=b_j$, $\tau$ maps a small neighbourhood $U_j\subset M$ of the point $a_j\in bM$ 
in a thin tube around the arc $\gamma_j$, $\tau$ agrees with the identity map to a given
order $k$ at the points of the finite set $A\subset \mathring M$, and $\tau$ is arbitrarily 
$\Cscr^1$ close to the identity map on $M\setminus \bigcup_{j=1}^m U_j$. 
We can choose $\tau$ to have any finite order of smoothness on $M$; 
for technical reasons which will become apparent in the next step 
we shall assume that it is of class $\Cscr^3(M)$. The map 
\begin{equation}\label{eq:h}	
	h=\tilde f\circ \tau:M\hra \C^2 
\end{equation}
is then a $\Cscr^3$ embedding which is holomorphic on $\mathring M$, 
its image $h(M)$ is a compact domain with $\Cscr^3$ boundary in the embedded 
complex curve $\tilde f(V)\subset\C^2$, and $\tau$ can be chosen such that 
for each $j=1,\ldots,m$ the complex line $\Lambda_j$ \eqref{eq:Lambdaj} 
intersects $h(M)$ only at $p_j=f(b_j)$. Assuming as we may that $\tau$ 
is close enough to the identity on $K$, \eqref{eq:tildef-L} implies
\[ 
	  h(M \setminus \mathring K) \cap L=\varnothing.	  
\] 
Let $g$ be a rational shear on $\C^2$ of the form 
\begin{equation}\label{eq:shear-g}
	g(z_1,z_2)=\left(z_1+ 
	\sum_{j=1}^m \frac{\rho\, \E^{\imath\theta_j}}{z_2-\pi_2(p_j)}+P(z_2), z_2\right)
\end{equation}
where $\rho>0$, $\theta_j\in \R$, and $P(z_2)$ is a holomorphic polynomial chosen such that 
the function $\sum_{j=1}^m \frac{\rho\, \E^{\imath\theta_j}}{z_2-\pi_2(p_j)}+P(z_2)$ vanishes
to order $k$ at the point $\pi_2(f(a))$ for every $a\in A$,   
and $P$ vanishes at every point $\pi_2(p_j)$ for $j=1,\ldots,m$. 
By taking the constant $\rho>0$ in \eqref{eq:shear-g} sufficiently small, 
the polynomial $P$ can be chosen such that $|P|$ is as small as desired 
on the compact set $\pi_2(L')$ where $L'$ is given by \eqref{eq:Lprime}. 
This gives a holomorphic embedding 
\begin{equation}\label{eq:gtildef}
	g\circ \tilde f:V\setminus\{b_1,\ldots,b_m\}\hra \C^2
\end{equation}
with simple poles at the points $b_1,\ldots, b_m$. Similarly, 
we have a $\Cscr^3$ embedding 
\begin{equation}\label{eq:gh}
	g\circ h = g\circ \tilde f\circ \tau: M'=M\setminus \{a_1,\ldots, a_m\} \hra \C^2
\end{equation}
which is holomorphic on $\mathring M$,  it approximates the embedding $h$ \eqref{eq:h} 
as closely as desired on $K$ provided that the constant $\rho>0$ in 
\eqref{eq:shear-g} is chosen small enough, it satisfies
\begin{equation}\label{eq:ghL}
	(g\circ h)(M'\setminus \mathring K) \cap L =\varnothing, 
\end{equation}
it agrees with $h$ (and hence with $f$) to order $k$ at the points of $A$, 
and $g\circ h$ sends the points $a_1,\ldots,a_m$ to infinity. 
More precisely, for every $j=1,\ldots,m$ the set
\begin{equation}\label{eq:sigmaj}
	\sigma_j=(g\circ h)(C_j\setminus \{a_j\}) \subset\C^2
\end{equation}
is a properly embedded curve of class $\Cscr^3$, diffeomorphic to $\R$, 
which is asymptotic to a line at every end (see \cite[Lemma 2]{Wold2006MZ} for the details), 
the first coordinate projection $\pi_1:\C^2\to \C$ maps  $\sigma_j$ to a proper curve 
$\tilde \sigma_j=\pi_1(\sigma_j) \subset \C$, and $\pi_1:\sigma_j\to \tilde \sigma_j$
is a diffeomorphism near infinity. Furthermore, the numbers $\theta_j\in \R$ in 
\eqref{eq:shear-g} can be chosen such that the projected curves $\tilde \sigma_j$ 
have different asymptotic directions, and for every sufficiently big number $s>0$ 
the set $\C\setminus \bigl(s\overline \D\, \cup\, \bigcup_{j=1}^m \tilde\sigma_j \bigr)$
has no bounded connected components. 
These choices are independent of the number $\rho>0$, which can be 
chosen arbitrarily small. 

With the curves $\sigma_j$ given by \eqref{eq:sigmaj} and the discrete set
$E\subset \mathring M$ as in the lemma, we define
\begin{equation}\label{eq:Gamma}
	\Gamma=\bigcup_{j=1}^m \sigma_j \subset\C^2\quad \text{and}\quad 
	E' = (g\circ h)(E)\subset\C^2\setminus\Gamma.   
\end{equation}
Since $E$ only clusters on $bM$, the set $E'$
only clusters on $\Gamma$, so $E'\cup\Gamma$
is closed and the projection $\pi_1:E'\cup\Gamma\to\C$ is proper.
Furthermore, since the curves $\sigma_j$ are asymptotic to lines at infinity,
for any $\C$-linear projection $\pi'_1:\C^2\to \C$ sufficiently close to $\pi_1$ 
the projection $\pi'_1:E'\cup\Gamma\to\C$ is still proper.
By a general position argument, $\pi'_1$ can be chosen such that
\begin{equation}\label{eq:injective}
	\pi'_1:E'\cup\Gamma\to\C\ \text{is proper},\  
	\pi'_1:E'\to\C\ \ \text{is injective, and}\  
	\pi'_1(E')\cap \pi'_1(\Gamma)=\varnothing.
\end{equation}
By a linear change of coordinates on $\C^2$ we may assume that this 
holds for $\pi_1$. Furthermore, if $\pi_1$ was not changed much, then
by our conditions on $\Gamma$ there is a number $s_0\ge 0$ such that 
\begin{equation}\label{eq:noholes}
	\text{the domain $\C \setminus (s\overline\D \cup \pi_1(\Gamma))$ 
	has no holes for $s \ge s_0$.}
\end{equation}

%
%
\begin{remark}\label{rem:gap}
The last two conditions in \eqref{eq:injective} are not discussed in 
\cite[proof of Lemma 2.2]{KutzschebauchLowWold2009}. 
Without them, we are unable to complete the proof
of Lemma \ref{lem:Wlem1} (the problem appears in the
construction of a shear $\psi$ in \eqref{eq:psi}). 
Indeed, we do not know how to prove \cite[Lemma 2.2]{KutzschebauchLowWold2009} 
without assuming that the linear projections $\C^2\to \C$ sufficiently close to $\pi_1$ 
are proper on $\Gamma$.
\end{remark}

%
%
\medskip \noindent 
{\em (3) Pushing $E'\cup\Gamma$ out of the ball $r\overline\B$.} 
We shall find an automorphism $\Phi\in\Aut(\C^2)$ sending the set $E'\cup\Gamma$  
out of the given ball $r \overline \B$. This is accomplished by the following lemma based 
on \cite[Lemma 1]{Wold2006MZ} and \cite[Lemma 2.2]{KutzschebauchLowWold2009}.
In light of Remark \ref{rem:gap}, we include a proof.

%
%
\begin{lemma}\label{lem:Wlem1} 
Let $E'$ and $\Gamma$ be as in \eqref{eq:Gamma}, satisfying conditions \eqref{eq:injective} 
and \eqref{eq:noholes} for the projection $\pi_1(z_1,z_2)=z_1$. Given a compact polynomially 
convex set $L\subset \C^2$ with $L\cap (E' \cup \Gamma) =\varnothing$ 
and numbers $r>0$ (big) and $\epsilon>0$ (small), there is an automorphism 
$\Phi\in \Aut(\C^2)$ satisfying the following conditions:
\begin{enumerate}[\rm (i)] 
\item $|\Phi(z)-z| < \epsilon$ for all $z\in L$,
\item $\Phi(E'\cup \Gamma)\subset \C^2\setminus r \overline \B$, and 
\item $\Phi$ agrees with the identity map to a given order $k$ at 
given points $q_1,\ldots, q_l\in L$.
\end{enumerate}
\end{lemma}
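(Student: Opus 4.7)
The plan is to realize $\Phi$ as a composition of two shears on $\C^2$. First, choose $s \ge s_0$ with $s > r$ and $\pi_1(L) \subset s\D$; by the properness of $\pi_1$ on $E' \cup \Gamma$ from \eqref{eq:injective}, the set
\[
\Sigma := (E' \cup \Gamma) \cap \pi_1^{-1}(s\overline{\D})
\]
is compact and disjoint from $L$, while $(E' \cup \Gamma) \setminus \Sigma \subset \{|z_1| > s\} \subset \C^2 \setminus r\overline{\B}$. Any shear of the form $(z_1, z_2) \mapsto (z_1, z_2 + g(z_1))$ preserves $|z_1|$ and therefore keeps this outer part outside $r\overline{\B}$. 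Thus the task reduces to constructing such a shear (preceded by a small preparatory one) that stays within $\epsilon$ of the identity on $L$, matches the identity to order $k$ at each $q_j$, and displaces $\Sigma$ out of $r\overline{\B}$.

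The main shear is built from an entire function $g : \C \to \C$ satisfying $|g| < \epsilon$ on $\pi_1(L)$, $g$ vanishing to order $k$ at each $\pi_1(q_j)$, and $|g(\pi_1(z))| > r + |\pi_2(z)|$ for every $z \in \Sigma$. Since $\pi_1(L)$ may meet $\pi_1(\Gamma)$ a priori, I would first apply a generic preliminary shear $\Phi_0(z_1, z_2) = (z_1 + h(z_2), z_2)$, with $h$ vanishing to order $k$ at each $\pi_2(q_j)$ and with $|h|$ arbitrarily small on $\pi_2(L)$, chosen so that $\pi_1(\Phi_0(L)) \cap \pi_1(\Phi_0(\Sigma)) = \varnothing$ and the conditions \eqref{eq:injective}--\eqref{eq:noholes} survive after a small adjustment of $s$; this is possible because $L$ and $\Sigma$ are disjoint compact subsets of $\C^2$ and because of the many degrees of freedom available in $h$. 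With the $\pi_1$-projections now separated, \eqref{eq:injective} forces $\pi_1(E' \cap \pi_1^{-1}(s\overline{\D}))$ to be a finite set disjoint from the closed curve $\pi_1(\Gamma)$, and the no-holes condition \eqref{eq:noholes} places the combined target set in a Runge configuration in $\C$. A Runge-type approximation combined with Hermite interpolation at the points $\pi_1(q_j)$ then yields the required $g$, and the composition $\Phi := (z_1, z_2 + g(z_1)) \circ \Phi_0$ satisfies (i)--(iii).

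The hard part will be to combine the three competing requirements on $g$---small on $\pi_1(L)$, large on $\pi_1(\Sigma)$, and prescribed Taylor vanishing at $\pi_1(q_j)$---into a single entire function, which is possible precisely because of conditions \eqref{eq:injective} and \eqref{eq:noholes}. Properness of $\pi_1$ on $\Gamma$ makes $\pi_1(\Gamma)$ a closed set with controlled behaviour at infinity; injectivity of $\pi_1$ on $E'$ together with $\pi_1(E') \cap \pi_1(\Gamma) = \varnothing$ reduces the discrete part of $\Sigma$ to finitely many isolated target points; and the no-holes condition is the Runge hypothesis guaranteeing the existence of such entire functions. As Remark~\ref{rem:gap} emphasises, these are exactly the conditions whose absence in \cite{KutzschebauchLowWold2009} is what makes the shear construction problematic in the first place.
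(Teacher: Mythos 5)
Your proposal takes a genuinely different route from the paper: the paper realizes $\Phi$ as a composition $\phi\circ\psi$ where the main displacement $\phi$ is produced by the Anders\'en--Lempert theorem (via Forstneri\v c--L\o w, moving the compact curve piece $\wt\Gamma$ along an isotopy while nearly fixing $L$), and the shear $\psi(z_1,z_2)=(z_1,z_2+\xi(z_1))$ is only a correction at infinity. You try instead to produce $\Phi$ using only shears. This is an attractive, more elementary strategy, but it rests on two strong claims that are asserted without proof, and at least one of them points to a real gap.

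The first is the claim that a preliminary $z_1$-shear $\Phi_0(z_1,z_2)=(z_1+h(z_2),z_2)$, small on $L$, can be chosen so that $\pi_1(\Phi_0(L))$ and $\pi_1(\Phi_0(\Sigma))$ become disjoint. A $z_1$-shear translates each $z_2$-fiber rigidly, so both projections depend on $h$ only through $h\circ\pi_2$, and the disjointness of $L$ and $\Sigma$ in $\C^2$ does not obviously translate into the required separation of their $\pi_1$-images; you invoke ``many degrees of freedom'' where an actual argument is needed. Even granting separation, you then need $\pi_1(\Phi_0(L))\cup\pi_1(\Phi_0(\Sigma))$ to have connected complement in $\C$ for the Runge/Mergelyan step to apply with the competing small/large boundary data, which is a further topological condition that is not supplied by \eqref{eq:noholes} after the shear has been applied.

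The second, and to my mind decisive, gap is the effect of $\Phi_0$ on $\Gamma$ at infinity. You note that the main $z_2$-shear preserves $|z_1|$ and hence keeps the outer part $(E'\cup\Gamma)\setminus\Sigma$ outside $r\overline\B$, but $\Phi_0$ is applied \emph{first}, and $\Phi_0$ does not preserve $|z_1|$. The curves $\sigma_j$ forming $\Gamma$ are asymptotic to straight lines, and on any end with a non-horizontal asymptote $|\pi_2|\to\infty$, so $|h(\pi_2(z))|$ is unbounded there for any non-constant entire $h$. It is therefore far from automatic that $\pi_1\circ\Phi_0=\pi_1+h\circ\pi_2$ remains proper on $\Gamma$, that the images $\pi_1(\Phi_0(\sigma_j))$ retain distinct asymptotic directions, or that \eqref{eq:noholes} survives with an adjusted $s$. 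You assert that these conditions persist, but this needs proof — and precisely these hypotheses are what Remark \ref{rem:gap} singles out as essential. The paper sidesteps the problem entirely by using only a shear of the form $\psi(z_1,z_2)=(z_1,z_2+\xi(z_1))$, which is compatible with the projection $\pi_1$ on which \eqref{eq:injective}--\eqref{eq:noholes} are stated, and by delegating the task of separating the compact part $\wt\Gamma$ from $L$ to the Anders\'en--Lempert theorem, which imposes no compatibility condition with $\pi_1$ at all and works for any isotopy within $\C^2\setminus L'$.
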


\begin{proof}
We shall obtain $\Phi$ as a composition $\Phi=\phi\circ\psi$ of 
two automorphisms, where $\phi$ will do the main job and $\psi$
will be a shear \eqref{eq:psi} taking care of things at infinity. 

Choose a compact polynomially convex set $L'\subset\C^2$ containing $L$ in its interior 
such that $L'\cap (E\cup \Gamma)=\varnothing$ and a number $\epsilon'>0$ to be
specified later. Let $s_0\ge 0$ be as in \eqref{eq:noholes}.
Pick $s\ge s_0$ such that $L' \subset s\D \times \C$ and set 
\begin{equation}\label{eq:tildeE}
	\wt E=E' \cap (s\overline\D\times \C) \quad\ \text{and}\quad
	 \wt \Gamma =\Gamma\cap (s\overline\D\times \C). 
\end{equation}
We move $\wt \Gamma$ out of the ball $r\overline\B$ by an isotopy of embeddings 
of class $\Cscr^3$ within the set $\C^2\setminus L'$. 
Since $\wt \Gamma$ is a union of curves,  the union of $L'$ with the image of 
$\wt \Gamma$ at every stage of the isotopy is polynomially convex
by Stolzenberg \cite{Stolzenberg1966AM}.
Hence, \cite[Theorem 2.1]{ForstnericLow1997} due to Forstneri\v c and L\o w 
furnishes an automorphism $\phi_1 \in\Aut(\C^2)$ satisfying
\begin{enumerate}[\rm (i')]
\item $|\phi_1(z)-z| < \epsilon'$ for all $z\in L'$, and 
\item $\phi_1(\wt \Gamma) \cap r \overline \B=\varnothing$.  
\end{enumerate}
(The proof of \cite[Theorem 2.1]{ForstnericLow1997} relies on the Anders\'en--Lempert
theorem in the version given by Forstneri\v c and Rosay \cite{ForstnericRosay1993}; 
see also \cite[Theorem 4.9.2]{Forstneric2017E}.) 

Since the discrete set $\wt E$ only clusters on $\wt \Gamma$ (see \eqref{eq:tildeE}), 
condition (i') implies that $\wt E=\wt E_1\cup \wt E_2$ where 
$\phi_1(\wt E_1) \cap r \overline \B=\varnothing$ and 
the set $\wt E_2=\wt E\setminus \wt E_1$ is finite. 
The compact set $\wt E_1\cup \wt \Gamma \cup L'$ is polynomially convex
by \cite[Lemma 2.3]{KutzschebauchLowWold2009}. 
Therefore, \cite[Proposition 4.15.3]{Forstneric2017E} furnishes an automorphism
$\phi_2 \in\Aut(\C^2)$ which is arbitrarily close to the identity map on 
$\phi_1(\wt E_1\cup \wt \Gamma \cup L')$ and maps the finite set 
$\phi_1(\wt E_2)$ into $\C^2\setminus  r \overline \B$. 
If the approximation is close enough then the automorphism 
$\phi_2\circ \phi_1\in\Aut(\C^2)$ satisfies the conditions 
\begin{enumerate}[\rm (i'')]
\item $|(\phi_2\circ \phi_1)(z)-z| < \epsilon'$ for all $z\in L'$,  and 
\item $(\phi_2\circ \phi_1)(\wt E\cup \wt \Gamma) \cap r \overline \B=\varnothing$.   
\end{enumerate}
We now correct $\phi_2\circ \phi_1$ so that the above conditions are preserved and 
the interpolation condition (iii) holds. Since $\phi_2\circ \phi_1$ is close to the identity 
on $L'$, its $k$-jet at each point $q_j$ is close to the $k$-jet of the identity map. 
To satisfy (iii) we take $\phi=\phi_3\circ \phi_2\circ \phi_1$ where a suitable 
automorphism $\phi_3 \in\Aut(\C^2)$ is 
obtained by \cite[proof of Theorem 4.9.2, p.\ 140]{Forstneric2017E}, which relies on 
the jet-interpolation theorem for holomorphic automorphisms 
\cite[Corollary 4.15.5, p.\ 174]{Forstneric2017E}.
Note that $\phi_3$ can be chosen arbitrarily close
to the identity map on any given compact set in $\C^2$ if the $k$-jet 
of $\phi_2\circ \phi_1$ at each point $q_j$ is close enough to the $k$-jet of the identity map.
Hence, assuming that $\epsilon'>0$ is chosen small enough, 
the automorphism $\phi=\phi_3\circ \phi_2\circ \phi_1$ satisfies condition (i), condition 
\begin{equation}\label{eq:condition-phi}
	\phi(\wt E \cup \wt \Gamma) \cap r \overline \B=\varnothing,
\end{equation} 
and condition (iii).  Recall the notation \eqref{eq:tildeE} and set 
\[
	E'' = E' \setminus \wt E\quad\text{and}\quad 
	\Gamma'=\Gamma\setminus \wt \Gamma.
\]
The problem now is that $\phi(E'' \cup \Gamma')$ may intersect the ball $r\overline \B$.
These intersections are removed by precomposing $\phi$ with a shear 
$\psi \in \Aut(\C^2)$ of the form 
\begin{equation}\label{eq:psi}
	\psi(z_1,z_2)=(z_1,z_2+\xi(z_1))
\end{equation}
for a suitably chosen entire function $\xi:\C\to\C$ which is close to $0$ 
on the disc $s\overline \D$. The idea is explained in 
\cite[proof of Lemma 2.2]{KutzschebauchLowWold2009} 
(based on \cite[Lemma 2.2]{BuzzardForstneric1997}); however,
some additions to their argument are necessary in light of Remark \ref{rem:gap}.
Note that $\phi^{-1}(r\overline \B)$ is a compact polynomially convex set.
Let $s\ge s_0$ be as above, and pick $s_1> s$ such that 
\begin{equation}\label{eq:s1}
	\pi_1(\phi^{-1}(r\overline \B))\subset s_1\D.
\end{equation}
The shear $\psi$ of the form \eqref{eq:psi} should be chosen such that
\begin{equation}\label{eq:condition-psi}
	\psi(E'\cup \Gamma)\cap \phi^{-1}(r\overline \B) =\varnothing.
\end{equation}
By \eqref{eq:s1} this does not impose any condition on the function $\xi$
on $\C\setminus s_1\D$, while on $s\overline \D$ it suffices 
to take $\xi$ close enough to $0$ in view of \eqref{eq:condition-phi}.
It is explained in \cite[Lemma 1]{Wold2006MZ} how to determine 
$\xi$ on the curves $\pi_1(\Gamma) \cap (s_1\D\setminus s\D)$
such that $\psi(\Gamma)\cap \phi^{-1}(r\overline \B)=\varnothing$. 
To find an entire function $\xi$ on $\C$ such that the above holds, 
one uses Mergelyan theorem (see \cite[Theorem 16]{FornaessForstnericWold2020}) 
and condition \eqref{eq:noholes}. Since the set $E'$ only clusters on $\Gamma$, 
there are at most finitely many points $Q=\{e_1,\ldots,e_i\}\subset E'$ such that 
$\psi(e_j)\in \phi^{-1}(r\overline \B)$ for $j=1,\ldots,i$,
i.e., condition \eqref{eq:condition-psi} fails only at these points. In view of
the last two conditions in \eqref{eq:injective} we can redefine $\xi$ at the points
$\pi_1(e_1),\ldots, \pi_1(e_i)\in s_1\D\setminus s\overline \D$, approximating 
the previously chosen function sufficiently closely on the 
polynomially convex set 
$
	\big[\pi_1((E'\setminus Q)\cup\Gamma) \cap (s_1\overline \D)\big] 
	\cup s\overline \D \subset\C,
$
so that the new shear map $\psi$ satisfies \eqref{eq:condition-psi}.
The interpolation of the identity at the given points $q_1,\ldots,q_l\in L$ 
is achieved by choosing $\xi$ such that it vanishes to order $k$ 
at every point $\pi_1(q_j)\in \pi_1(L)\subset s\D$ for $j=1,\ldots,l$. 
The automorphism $\Phi=\phi\circ\psi\in\Aut(\C^2)$ then satisfies Lemma \ref{lem:Wlem1}. 
\end{proof}

%
%
\smallskip \noindent 
{\em (4) Bringing back the points at infinity.}
Recall that $\tilde f:V\hra\C^2$ is a holomorphic embedding 
satisfying \eqref{eq:tildef-L} and $g\circ \tilde f:V\setminus \{b_1,\ldots,b_m\}\hra \C^2$
is given by \eqref{eq:gtildef}. Furthermore, $\tau:M\to \tau(M)\subset V$ is a conformal 
diffeomorphism in \eqref{eq:tau} and $h=\tilde f\circ\tau$ \eqref{eq:h}. 
The compact set $K\subset \mathring M$ is holomorphically convex in 
$\mathring M$, and in view of \eqref{eq:ghL} the compact set 
\begin{equation}\label{eq:tildeL}
	\wt L := L\cup (g\circ h)(K)\subset\C^2 
\end{equation}
is polynomially convex (for the details, see \cite{Wold2006IJM} or 
\cite[proof of Theorem 4.14.6, p.\ 168]{Forstneric2017E}).  
Let $\Phi\in\Aut(\C^2)$ be given by Lemma \ref{lem:Wlem1} with $L$ replaced 
by $\wt L$, the set $E'=(g\circ h)(E)$, and with interpolation to order $k$ 
(see condition (iii)) on the finite set $f(A)=(g\circ h)(A)\subset (g\circ h)(K)\subset \wt L$. 
Consider the holomorphic embedding 
\[
	\wt F := \Phi\circ g\circ \tilde f:V\setminus \{b_1,\ldots,b_m\}\hra \C^2.
\]
The image of $\wt F$ is an embedded complex curve in $\C^2$ containing 
the image of the embedding 
\[
	\wt F\circ \tau=\Phi\circ g\circ h: M\setminus \{a_1,\ldots,a_m\}\hra \C^2.
\]
By the construction, $\wt F\circ \tau$ satisfies Lemma \ref{lem:main}
except that the points $a_1,\ldots,a_m\in bM$ are sent to infinity. 
We now bring them back to $\C^2$ as follows. 
On every curve $\tau(C_j) \subset \tau(bM)$ we choose a proper closed arc 
$I_j\subsetneq \tau(C_j)$ containing the point $\tau(a_j)=b_j$ in its interior.
Let $v$ be a smooth vector field on $R$ along the set 
$I=\bigcup_{j=1}^m I_j$ which is transverse to $\tau(bM)$ and points to the interior of 
$\tau(M)$. Mergelyan theorem 
allows us to approximate $v$ by a holomorphic vector field 
on a neighbourhood of $\tau(M)$ in $V$, still denoted $v$,
which vanishes to order $k$ at every point of $A$.
(Note that the tangent bundle of $V$ is trivial, so we may think of $v$ as a function.) 
The flow $\psi_t$ of $v$ for small values of $|t|$, with $\psi_0=\Id$, 
exists on a neighbourhood
of $\tau(M)$ in $V$ and consists of biholomorphic maps whose $k$-jet at every point 
of $A$ agrees with the $k$-jet of the identity map. Since $v$ points to the interior of 
$\tau(M)$ along $I$, for small $t>0$ the closed domain $\psi_t(\tau(M))\subset V$ 
(which is conformally diffeomorphic to $M$) does not contain any of the points 
$b_1,\ldots,b_m$, and hence for such $t$ the embedding
\[
	F= \Phi\circ g\circ \tilde f \circ \psi_t \circ \tau : M\hra\C^2
\]
satisfies the conclusion of Lemma \ref{lem:main}.

%
%
\begin{remark} \label{rem:otherdomains}
(A) The same proof shows that Lemma \ref{lem:main} also holds
if the holomorphic map $f:M\to\C^2$ has finitely many branch points in 
$\mathring M$; see \cite[Theorem 1.1]{ForstnericWold2009} for the details. 

\smallskip

(B) The proof of Lemma \ref{lem:main} can be adapted to 
give an analogous result for embeddings of bordered Riemann surfaces in $\C\times \C^*$.
In this case, the compact set $L\subset \C\times \C^*$
should be holomorphically convex in $\C\times \C^*$, and condition (i) in the lemma
should be replaced by asking that $F(E\cup bM)$ lies outside the cylinder
$ 
	\{(z_1,z_2)\in \C^2: |z_1|\le r,\ 1/r\le |z_2|\le r\}
$ 
for a given $r>1$. An inductive application of this lemma yields the analogue of 
Corollary \ref{cor:main} for proper holomorphic embeddings $\mathring M\hra \C\times \C^*$
(see Ritter \cite[Theorem 4]{Ritter2013JGEA}).
\end{remark}


\section{Proof of Theorem \ref{th:dense}}\label{sec:dense}

We begin with some reductions. 
As in Lemma \ref{lem:main}, we assume as we may that $M$ is a closed 
smoothly bounded domain in a compact Riemann surface $R$. 
Fix $K$, $\alpha_j$, $\beta_j$, and $\epsilon$ as in the statement of Theorem \ref{th:dense}. 
Also, fix finitely many points $c_1,\ldots,c_l$ in 
$K\setminus f^{-1}(\{\beta_j: j=1,2,\ldots\})$ for 
the interpolation condition (ii), as well as a number $k\in\N$ for the interpolation order. 
By Mergelyan theorem in the $\Cscr^1$ topology with interpolation at the points $c_1,\ldots,c_l$
(see \cite[Theorem 16]{FornaessForstnericWold2020}), 
we may assume that $f$ is given by a holomorphic embedding $f:U\hra\C^2$ 
on an open neighbourhood $U\subset R$ of $M$.

Choose a smoothly bounded compact domain $K_0\subset \mathring M$ which 
is a strong deformation retract of $M$ such that $K\subset \mathring K_0$.  
By renumbering the points $\alpha_j$ and $\beta_j$, we may assume that 
$\alpha_1,\ldots, \alpha_{j}\in K_0$ and $\alpha_i\notin K_0$ for all $i>j$.
Applying Lemma \ref{lem:main} we find a holomorphic embedding 
$f_0:M\hra \C^2$ which approximates $f$ as closely as desired on $K$, 
it agrees with $f$ to order $k$ at the points $c_1,\ldots,c_l$, and it 
satisfies $f_0(\alpha_i)=\beta_i$ for $i=1,\ldots,j$. We now add the points
$\{\alpha_1,\ldots,\alpha_j\}$ to the finite set $\{c_1,\ldots,c_l\}$ and drop them from 
the list of $\alpha$-points in the theorem. 
Likewise, we drop $\beta_1,\ldots,\beta_j$ from the list of $\beta$-points.

We take $f_0$ as our new starting map and $K_0$ as the new initial compact set
in the theorem. Set $K_{-1}=\varnothing$ and $\epsilon_0=\epsilon/2$.
We may assume without loss of generality that $\beta_j\neq 0$ for all $j=1,2,\ldots$, 
and that there is $r_0>0$ such that
\begin{equation}\label{eq:inff0}
	f_0(M)\cap r_0\overline\B=\varnothing.
\end{equation}
Choose a sequence $0<r_0<r_1<r_2<\cdots$ with $\lim_{j\to\infty}r_j=+\infty$. 
Fix a point $\alpha_0\in K_0\setminus f_0^{-1}(\{\beta_j: j=1,2,\ldots\})$ 
and set $\beta_0=f_0(\alpha_0)$. We shall inductively construct a sequence 
of triples $X_j=\{f_j,K_j,\epsilon_j\}$, $j\in\N$, where
\begin{itemize}
\item $f_j: M\hra\C^2$ is a holomorphic embedding,
\smallskip
\item $K_j\subset\mathring M$ is a smoothly bounded compact domain which is a 
strong deformation retract of $M$, and
\smallskip
\item $\epsilon_j>0$ is a number,
\end{itemize}
such that
\begin{equation}\label{eq:exhaustion}
	\bigcup_{j\in\N} K_j=\mathring M
\end{equation}
and the following conditions hold for all $j\in\N$:
\begin{enumerate}[(1$_j$)]
\item $K_{j-1}\cup\{\alpha_i: i=0,\ldots,j\}\subset\mathring K_j$ and $\{\alpha_i: i>j\}\cap K_j=\varnothing$.
\smallskip
\item $\sup_{x\in K_{j-1}}|f_j(x)-f_{j-1}(x)|<\epsilon_j$.
\smallskip
\item $\epsilon_j<\epsilon_{j-1}/2$ and every holomorphic map $\varphi: \mathring M\to\C^2$ 
with $|\varphi-f_{j-1}|<2\epsilon_j$ everywhere on $K_{j-1}$ is an embedding on $K_{j-2}$.
\smallskip
\item $f_j(\alpha_i)=\beta_i$ for all $i\in\{0,\ldots,j\}$.
\smallskip
\item $f_j$ agrees with $f_{j-1}$ to order $k$ at $c_i$ for all $i\in\{1,\ldots,l\}$.
\smallskip
\item $f_j(M\setminus\mathring K_j)\cap r_j\overline\B=\varnothing$.
\smallskip
\item $f_j(M\setminus\mathring K_{j-1})\cap\min\{r_{j-1},|\beta_j|/2\}\overline\B=\varnothing$.
\end{enumerate}

Assuming the existence of such a sequence, the proof of Theorem \ref{th:dense} is completed as 
follows. Conditions (1$_j$), (2$_j$), (3$_j$), and \eqref{eq:exhaustion} ensure that there 
exists a limit map
\[
	F=\lim_{j\to\infty} f_j: \mathring M\to\C^2
\]
which is an injective holomorphic immersion and satisfies
\begin{equation}\label{eq:<2epsilonj}
	\sup_{x\in K_{j-1}}|F(x)-f_{j-1}(x)|<2\epsilon_j \quad \text{for all}\  j\in\N.
\end{equation}
This implies condition (i) in the theorem; recall that $f_0=f$ and $2\epsilon_1<\epsilon_0<\epsilon$. Conditions (4$_j$) and (5$_j$) ensure that $F(\alpha_i)=\beta_i$ for all $i=1,2,\ldots$ and $F$ agrees with $f$ to order $k$ at $c_i$ for all $i\in\{1,\ldots,l\}$; so, (ii) and (iii) hold as well. Now, \eqref{eq:<2epsilonj} and condition (6$_j$) guarantee that 
\begin{equation}\label{eq:almost-proper}
	\inf_{x\in bK_j}|F(x)|> r_j-2\epsilon_{j+1}>r_j-\epsilon_0\quad \text{for all}\  j\in\N.
\end{equation}
Since the increasing sequence of compact sets $K_j$ exhausts $\mathring M$ 
(see \eqref{eq:exhaustion}) and we have that $\lim_{j\to\infty}r_j=+\infty$, it follows 
that $F: \mathring M\hra\C^2$ is an almost proper map.
%
%

Likewise, \eqref{eq:<2epsilonj} and (7$_j$) give that 
\begin{equation}\label{eq:almost-proper}
	\inf_{x\in K_j\setminus\mathring K_{j-1}}|F(x)|> 
	\min\Big\{r_{j-1}\,,\,\frac{|\beta_j|}2\Big\}-\epsilon_0\quad \text{for all}\  j\in\N.
\end{equation}
If the sequence $\beta_j\in\C^2$ is closed and discrete, we have that 
$\lim_{j\to\infty}|\beta_j|=+\infty$, and hence
\[
	\lim_{j\to\infty}\min\Big\{r_{j-1}\,,\,\frac{|\beta_j|}2\Big\}=+\infty. 
\]
This, \eqref{eq:exhaustion}, \eqref{eq:almost-proper}, and conditions (1$_j$) imply that in this special case the injective immersion $F: \mathring M\hra \C^2$ which we constructed is a proper map, hence an embedding, thereby proving the final assertion 
of the theorem. 

Let us explain the induction. The basis is given by the triple $X_0=\{f_0,K_0,\epsilon_0\}$; observe 
that it meets conditions (1$_0$), (4$_0$), and (6$_0$), see \eqref{eq:inff0}, while (2$_0$), (3$_0$), 
(5$_0$), and (7$_0$) are void. For the inductive step we fix $j\in\N$ and assume that we have a 
triple $X_{j-1}=\{f_{j-1},K_{j-1},\epsilon_{j-1}\}$ enjoying (1$_{j-1}$), (4$_{j-1}$), and 
(6$_{j-1}$). By the Cauchy estimates, there is a number $\epsilon_j>0$ 
so small that (3$_j$) holds true. Recall that $\beta_j\neq 0$ and set 
\[
	L=\min\Big\{r_{j-1}\,,\,\frac{|\beta_j|}2\Big\} \,\overline\B.
\]
By (6$_{j-1}$), we have that
\[
	f_{j-1}(M\setminus \mathring K_{j-1})\cap L=\varnothing.
\] 
Choose a number $r>r_j$ so large that
\begin{equation}\label{eq:r>rj}
	f_{j-1}(M)\subset (r-\epsilon_j)\B.
\end{equation}
Lemma \ref{lem:main} then applies to the embedding $f_{j-1}$, the compact set $K_{j-1}$ (it has no holes since it is a strong deformation retract of $M$), the compact polynomially convex set $L\subset\C^2$, the singletons $\{\alpha_j\}\subset \mathring M\setminus K_{j-1}$ (see (1$_{j-1}$)) and $\{\beta_j\}\subset \C^2\setminus L$, the closed discrete set $E=\{\alpha_i: i>j\}\subset\mathring M$ (note that $E\cap (K_{j-1}\cup\{\alpha_j\})=\varnothing$ by (1$_{j-1}$)), and the numbers $\epsilon_j>0$ and $r>0$, furnishing us with a holomorphic embedding $f_j: M\hra \C^2$ satisfying (2$_j$), (5$_j$), (7$_j$), and the following conditions:
\begin{enumerate}[\rm (a)]
\item $f_j(bM\cup \{\alpha_i: i>j\})\cap r\overline\B=\varnothing$.
\smallskip
\item $f_j(\alpha_j)=\beta_j$ and $f_j(\alpha_i)=f_{j-1}(\alpha_i)$ for all $i\in\{1,\ldots,j-1\}$.
\smallskip
\item $f_j(K_{j-1})\subset r\B$.
\end{enumerate}
The first part of condition (b) is ensured by Lemma \ref{lem:main}-(e), while the second part is granted by Lemma \ref{lem:main}-(d). Condition (c) is implied by (2$_j$) and \eqref{eq:r>rj}.

Note that (b) and (4$_{j-1}$) ensure (4$_j$). Finally, in view of (a), (c), and the fact that $r>r_j$, we can choose a smoothly bounded compact domain $K_j\subset\mathring M$, being a strong deformation retract of $M$, such that (1$_j$) and (6$_j$) hold true. Indeed, by (a) we can first take a smoothly bounded compact domain $K_j'\subset\mathring M$ which is a strong deformation retract of $M$ such that $K_{j-1}\cup\{\alpha_i: i=0,\ldots,j\}=K_{j-1}\cup\{\alpha_j\}\subset\mathring K_j'$ and 
\begin{equation}\label{eq:Kj'}
	f_j(M\setminus\mathring K_j')\cap r\overline\B=\varnothing.
\end{equation} 
If the set $E'=E\cap K_j'=\{\alpha_i: i>j\}\cap K_j'$ is empty, then we simply choose $K_j=K_j'$. 
Otherwise, $E'$ is a finite set (recall that the sequence $\alpha_i\in\mathring M$ only clusters
on $bM$) and we can choose $K_j'$ so that $E'\subset\mathring K_j'\setminus K_{j-1}$; see
(1$_{j-1}$). Let $\Omega_1,\ldots,\Omega_m$ denote the connected components of
$K_j'\setminus \mathring K_{j-1}$; these are smoothly bounded compact annuli since 
$K_{j-1}\subset\mathring K_j'$ is a strong deformation retract of $K_j'$. Fix $i\in\{1,\ldots,m\}$. 
Conditions (a), (c), and \eqref{eq:Kj'} imply that the set 
$\Omega_i'=\Omega_i\cap f_j^{-1}(\C^2\setminus r\overline\B)$ is disjoint from $bK_{j-1}$ 
and it contains $E'\cap\Omega_i$ as well as an open neighborhood of $\Omega_i\cap bK_j'$. 
Since $\Omega_i'$ is open in $\Omega_i$, these properties and the maximum principle show 
that $\Omega_i'$ is path connected, and hence we can choose a smooth Jordan arc 
$\gamma_i\subset \Omega_i'\setminus\{\alpha_j\}$ containing $E'\cap\Omega_i$, having 
an endpoint in $\Omega_i\cap bK_j'$ and otherwise disjoint from $bK_j'$. Set 
$\gamma=\bigcup_{i=1}^m\gamma_i$. Note that 
$K_{j-1}\cup\{\alpha_j\}\subset \mathring K_j'\setminus \gamma$, $K_{j-1}$ is a 
strong deformation retract of $\mathring K_j'\setminus \gamma$, 
$\{\alpha_i: i>j\}\cap K_j'\setminus \gamma=\varnothing$, and 
$f_j(M\setminus (\mathring K_j'\setminus\gamma))\cap r\overline\B=\varnothing$. 
It is clear that every sufficiently large smoothly bounded compact domain 
$K_j\subset \mathring K_j'\setminus\gamma$ which is a strong deformation 
retract of $M$ satisfies conditions (1$_j$) and (6$_j$).
This closes the induction. 

Note that at each step of the induction we are allowed to choose the compact domain 
$K_j\subset\mathring M$ as large as desired under the only restriction imposed by the 
second part of condition (1$_j$). So, since the sequence 
$\alpha_j\in \mathring M$ is closed and discrete, we can proceed in such a way that 
condition \eqref{eq:exhaustion} is satisfied. 
This completes the proof of Theorem \ref{th:dense}.


\section{Proper embeddings of circle domains in $\CP^1$ into $\C^2$}
\label{sec:FW}

Recall that a circle domain in $\CP^1$ is an open domain of the form 
\begin{equation}\label{eq:circle}
	\Omega=\CP^1\setminus \bigcup_{i=0}^\infty D_i
\end{equation} 
where $D_i$ are pairwise disjoint closed round discs.
By the uniformization theorem of He and Schramm \cite{HeSchramm1993}, 
every domain of the form \eqref{eq:circle}, where $D_i$ are pairwise 
disjoint closed topological discs (homeomorphic images of $\overline\D$), 
is conformally equivalent to a circle domain. 
We give a simpler proof of the following result
\cite[Theorem 1.1]{ForstnericWold2013} due to Forstneri\v c and Wold. 

%
%
\begin{theorem} \label{th:FW}
Every circle domain in $\CP^1$ embeds properly holomorphically into $\C^2$. 
\end{theorem}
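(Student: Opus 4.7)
The plan is to construct $F : \Omega \hookrightarrow \C^2$ as the uniform-on-compacta limit of a sequence of holomorphic embeddings $f_n : \mathring M_n \hookrightarrow \C^2$, where
\[
M_n = \CP^1 \setminus \bigcup_{i=0}^n \mathring D_i
\]
is the compact bordered Riemann surface of finite topology obtained by retaining only the first $n+1$ discs in the complement. The base case is any $\Cscr^1$ embedding $f_0 : M_0 \hookrightarrow \C^2$ holomorphic on $\mathring M_0$, which exists trivially because $M_0$ is conformally the closed disc. I fix a normal exhaustion of $\Omega$ by compact sets $K_0 \subset K_1 \subset \cdots$ with $K_n \subset \mathring K_{n+1}$, $\bigcup_n K_n = \Omega$, and $K_n$ without holes in $\Omega$. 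Since every compact subset of $\Omega$ has positive distance in $\CP^1$ to $\overline{\bigcup_i bD_i}$, after reindexing I may assume $K_n \subset \mathring M_n$ for all $n$.

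The heart of the argument is the inductive step from $f_{n-1}$ to $f_n$, producing a holomorphic embedding $f_n : M_n \hookrightarrow \C^2$ such that, for a rapidly decreasing $\epsilon_n > 0$ and rapidly increasing $r_n > 0$,
\begin{enumerate}[\rm (i)]
\item $\sup_{K_{n-1}} |f_n - f_{n-1}| < \epsilon_n$,
\item $f_n(bM_n) \cap r_n\overline\B = \varnothing$, and
\item $f_n(D_i) \cap r_n\overline\B = \varnothing$ for every $i > n$.
\end{enumerate}
To achieve this I appeal to Lemma \ref{lem:mainbis}, the circle-domain analogue of Lemma \ref{lem:main} announced in the introduction: it allows one to push the boundary $bM_n$ together with the countable family $\{D_i\}_{i > n}$ of remaining discs inside $\mathring M_n$ arbitrarily far toward infinity, while approximating a given embedding on $K_{n-1}$. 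The hypotheses match here because the holes of $K_{n-1}$ in $\mathring M_n$ are precisely the interiors $\mathring D_i$ with $i > n$, which is exactly the family the lemma is designed to push out.

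Choosing $\epsilon_n$ so small, via Cauchy estimates on a slightly enlarged neighborhood of $K_{n-1}$, that every holomorphic $\epsilon_n$-perturbation of $f_{n-1}$ on $K_{n-1}$ remains an embedding on $K_{n-2}$, the sequence $f_n$ converges on $\Omega$ uniformly on compact sets to a holomorphic map $F : \Omega \to \C^2$; a standard Hurwitz argument then shows $F$ is an injective immersion. Properness of $F$ follows from (ii) and (iii) combined with the telescoped inequality from (i): any sequence in $\Omega$ that leaves every compact subset of $\Omega$ must, in $\CP^1$, accumulate either on some $bD_i$ or on a point of $\overline{\bigcup_i bD_i}$, and in either case its image under $F$ is eventually outside $r_n\overline\B$ for every $n$.

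The main obstacle is the circle-domain lemma itself: one must push a countable family of pairwise disjoint discs (rather than a closed discrete set of points) simultaneously to infinity by a single holomorphic automorphism of $\C^2$, while keeping a polynomially convex compact set essentially fixed. As indicated in the introduction, Lemma \ref{lem:mainbis} is obtained by fusing the bring-back-from-infinity technique developed in Section \ref{sec:mainlemma} with the discs-to-infinity machinery of \cite[Lemma 3.1]{ForstnericWold2013}. Once this lemma is in place, the induction above is essentially a diagonal-exhaustion argument parallel to the proof of Theorem \ref{th:dense}.
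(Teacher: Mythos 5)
Your proposal follows the paper's overall strategy, but there is a genuine gap in the inductive step that makes it break down as written. When you invoke Lemma~\ref{lem:mainbis} to pass from $f_{n-1}$ to an embedding $f_n$ of $M_n=\CP^1\setminus\bigcup_{i=0}^n\mathring D_i$, the hypothesis of the lemma requires the compact set $K_{n-1}$ to be $\Oscr(\mathring M_n)$-convex (no holes in $\mathring M_n$). Your $K_{n-1}$ is only assumed $\Oscr(\Omega)$-convex, which is strictly weaker when $\Omega\subsetneq\mathring M_n$: if $K_{n-1}$ surrounds a disc $D_i$ with $i>n$ (which it must for arbitrarily many $i$ once the $K_n$ exhaust $\Omega$), then the region enclosed around $D_i$ is a \emph{hole} of $K_{n-1}$ in $\mathring M_n$, since $D_i\subset\mathring M_n$ and this component is relatively compact there. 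Consequently the $\Oscr(\mathring M_n)$-convex hull of $K_{n-1}$ strictly contains $K_{n-1}$ and sticks out of $\Omega$, so the hypothesis of Lemma~\ref{lem:mainbis} fails. Your claim that ``the holes of $K_{n-1}$ in $\mathring M_n$ are precisely the interiors $\mathring D_i$ with $i>n$, which is exactly the family the lemma is designed to push out'' conflates the lemma's \emph{conclusion} (which pushes $bM_n\cup\bigcup_{i>n}D_i$ out of a ball) with its \emph{hypothesis} (which forbids holes of $K$ in $\mathring M_n$); reindexing so that $K_n\subset\mathring M_n$ does not repair this.

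The paper sidesteps the problem by choosing the finite circle domains $M_j$ \emph{adaptively} rather than by naive truncation. After fixing a compact, smoothly bounded, $\Oscr(\Omega)$-convex set $K_{j-1}$, it observes that the $\Oscr(\mathring M_{j-1})$-hull of $K_{j-1}$ equals $K_{j-1}$ together with finitely many smoothly bounded open discs in $\mathring M_{j-1}$, each containing some $D_i$'s from the remaining family; it then removes exactly those finitely many $D_i$'s to form $M_j$, which makes $K_{j-1}$ automatically $\Oscr(\mathring M_j)$-convex. The number of discs removed per stage is therefore dictated by the geometry of the exhaustion, not prescribed in advance as $n+1$. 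Your scheme can be repaired either by adopting this adaptive choice of $M_j$, or by instead choosing $K_{j-1}$ \emph{after} $M_j$ so as to be $\Oscr(\mathring M_j)$-convex (with care taken to ensure $\bigcup_j K_j=\Omega$). The remaining ingredients in your argument — the Cauchy-estimate control of $\epsilon_n$ to preserve injectivity in the limit, and deriving properness from the escape conditions combined with the ``$f_j(M_j\setminus\mathring K_{j-1})$ outside a big ball'' clause of Lemma~\ref{lem:mainbis} (which you should state explicitly rather than leave implicit in the topological remark) — are in line with the paper's proof once the convexity hypothesis is satisfied.
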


We shall use the following analogue of Lemma \ref{lem:main} adapted to this situation.

%
%
\begin{lemma}\label{lem:mainbis}
Let $\Omega$ be a circle domain \eqref{eq:circle} in $\CP^1$, and let 
$k\in \Z_+$. Given a $\Cscr^1$ embedding 
$f:M_k=\CP^1\setminus\bigcup_{i=0}^k \mathring D_i\hra\C^2$ 
which is holomorphic in $\mathring M_k$, 
a compact set $K\subset \Omega$ which is $\Oscr(\mathring M_k)$-convex, 
a compact polynomially convex set $L\subset \C^2$ such that 
$f(M_k\setminus \mathring K) \cap L=\varnothing$, points $\alpha\in \Omega\setminus K$
and $\beta\in\C^2\setminus L$, and a number $r>0$, we can approximate $f$ as closely 
as desired uniformly on $K$ by a holomorphic embedding $F:M_k\hra \C^2$ 
which agrees with $f$ at finitely many given points in $K$ and satisfies 
\[
	F\Big(bM_k \cup \bigcup_{i=k+1}^\infty D_i\Big) \subset \C^2\setminus r\overline \B,
	\quad 
	F(M_k\setminus \mathring K) \cap L=\varnothing,
	\ \ \ \text{and}\ \ \ 
	F(\alpha)=\beta. 
\]
\end{lemma}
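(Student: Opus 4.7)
My plan is to follow the four-step scheme of the proof of Lemma \ref{lem:main}, replacing the closed discrete set $E\subset \mathring M$ by the countable family of pairwise disjoint closed topological discs $\{D_i\}_{i>k}\subset \mathring M_k$, and replacing Lemma \ref{lem:Wlem1} by \cite[Lemma 3.1]{ForstnericWold2013}. First, as in Step (1) of Section \ref{sec:mainlemma}, I would use Mergelyan approximation to extend $f$ to a holomorphic embedding on a neighbourhood of $M_k$ and, arguing as in the opening paragraphs of that section, arrange that $L\cup f(M_k)$ is polynomially convex; then an application of the Andersén--Lempert theorem via \cite[Proposition 4.15.3]{Forstneric2017E} yields a holomorphic automorphism of $\C^2$ which is close to the identity on $L':=L\cup f(K)$, interpolates the identity at the given finite set of points in $K$, and sends $f(\alpha)$ to $\beta$; I would replace $f$ by its composition with this automorphism.

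Next, following Step (2), I would expose a boundary point $a_j$ in each of the $k+1$ boundary circles of $bM_k$ by the procedure of \cite[Theorem 2.3]{ForstnericWold2009} and postcompose with a rational shear $g$ as in \eqref{eq:shear-g}, obtaining an embedding $g\circ h:M_k\setminus\{a_0,\ldots,a_k\}\hra\C^2$ whose boundary consists of proper $\Cscr^3$ curves $\sigma_0,\ldots,\sigma_k$ asymptotic to affine lines in pairwise distinct directions. Since each $D_i$ with $i>k$ lies in the interior of $M_k$ and avoids the exposed points $a_j$, its image $(g\circ h)(D_i)$ is a compact topological disc in $\C^2$, and the family $\{(g\circ h)(D_i)\}_{i>k}$ is pairwise disjoint and clusters only on $\Gamma=\bigcup_{j=0}^k\sigma_j$, so the set
\[
	\Sigma:=\Gamma\cup\bigcup_{i>k}(g\circ h)(D_i)
\]
is closed in $\C^2$ and disjoint from the compact set $\wt L:=L\cup(g\circ h)(K)$, which is polynomially convex because $K$ has no holes in $\mathring M_k$.

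The heart of the matter, and where I expect the main obstacle to lie, is the third step: to produce an automorphism $\Phi\in\Aut(\C^2)$ that is close to the identity on $\wt L$, interpolates the identity to sufficient order at the prescribed finite set inside $\wt L$, and pushes the entire closed set $\Sigma$ out of the ball $r\overline\B$. This is exactly the content of \cite[Lemma 3.1]{ForstnericWold2013}, which rests on a delicate inductive use of the Andersén--Lempert theorem of \cite{ForstnericRosay1993} together with Stolzenberg's theorem \cite{Stolzenberg1966AM} and \cite[Lemma 2.3]{KutzschebauchLowWold2009} to preserve polynomial convexity at each stage as the countably many discs, together with $\Gamma$, are successively pushed outward; unlike in Lemma \ref{lem:Wlem1}, where the only escaping component away from $\Gamma$ consists of points handled by a finite correction plus a shear, here one must simultaneously displace infinitely many topological discs, and this is already the main technical core of \cite{ForstnericWold2013}. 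Once $\Phi$ is in hand, the final fourth step is routine: I would bring the exposed points back by flowing the conformal image $\tau(M_k)$ slightly inward along a holomorphic vector field $v$ transverse to $\tau(bM_k)$ near the points $b_j=\tau(a_j)$, as in Step (4) of the proof of Lemma \ref{lem:main}, and then $F=\Phi\circ g\circ\tilde f\circ\psi_t\circ\tau:M_k\hra\C^2$ would satisfy all conclusions of the lemma.
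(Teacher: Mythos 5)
Your overall scheme is the right one: combine the four-step strategy of Lemma~\ref{lem:main} with \cite{ForstnericWold2013}, replacing the discrete set $E$ by the countable family of discs $\{D_i\}_{i>k}$. However, your Step~(2) is where the proposal departs from what the paper actually does, and this is not a cosmetic difference. You expose a single boundary point $a_j$ on each boundary circle of $M_k$ using the standard procedure of \cite[Theorem~2.3]{ForstnericWold2009}, producing a conformal map $\tau$ of $M_k$ onto a domain with \emph{smooth} boundary, and then ask \cite[Lemma~3.1]{ForstnericWold2013} in Step~(3) to push $\Sigma=\Gamma\cup\bigcup_{i>k}(g\circ h)(D_i)$ out of the ball. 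But that lemma is not a self-contained statement about arbitrary sets of this shape; it is tied to a \emph{modified} exposing, in which $\tau$ maps $M_k$ onto a domain with \emph{piecewise smooth} boundary, and it is the finitely many corner points of $\tau(bM_k)$ (not the smoothly exposed points) that are sent to infinity by the shear. The whole purpose of introducing corners is to guarantee that the projection $\pi_1:\C^2\to\C$, restricted to the image of $M_k$, is \emph{injective near infinity} — this is exactly what makes it possible to define a single shear $\psi(z_1,z_2)=(z_1,z_2+\xi(z_1))$ whose composition $\Phi=\phi\circ\psi$ simultaneously moves all the discs $(g\circ h)(D_i)$ out of $r\overline\B$. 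With the smooth exposing of \cite{ForstnericWold2009} you cannot guarantee this injectivity (the two ends of each $\sigma_j$ are asymptotic to the same line, and worse, the images of the discs clustering along $\Gamma$ are $2$-dimensional and may project onto overlapping regions in a way that a shear in $z_1$ cannot untangle), so the Step~(3) you propose cannot be carried out as stated.

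There is a second, smaller omission that is a direct consequence of the first: once you replace $\tau$ by the piecewise-smooth version, the domain $\psi_t\circ\tau(M_k)$ obtained in Step~(4) still has corners, so the final map is only an embedding of a domain with piecewise smooth boundary. The paper therefore adds a last step — approximating the conformal diffeomorphism $\psi_t\circ\tau$ by a smooth conformal diffeomorphism of $M_k$ onto a nearby smoothly bounded domain in $V$ — in order to recover an honest holomorphic embedding $F:M_k\hra\C^2$ in the sense defined in Section~\ref{sec:intro}. This step is entirely routine once you notice it is needed, but it is absent from your outline because you never introduce the corners to begin with.
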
 

This lemma is obtained by combining \cite[proof of Lemma 3.1]{ForstnericWold2013} 
with the proof of Lemma \ref{lem:main} in Section \ref{sec:mainlemma}.
The only difference in \cite[Lemma 3.1]{ForstnericWold2013} when compared
to the technique used in the earlier papers
\cite{ForstnericWold2009,Wold2006IJM,Wold2006MZ} 
is that the conformal diffeomorphism $\tau:M\to \tau(M)\subset V$
in \eqref{eq:tau} is chosen such that it maps $M$ onto a domain
with piecewise smooth boundary in the ambient Riemann surface $V$.
(In the context of Lemma \ref{lem:mainbis}, we apply this argument 
to the bordered Riemann surface $M=M_k$.)
The finitely many corner points of $\tau(bM)$ are mapped to infinity
by the embedding $\tilde f$; see \eqref{eq:h}. The main point of this
change is to ensure that the first coordinate projection $\pi_1 :\C^2\to\C$,
restricted to the image of $M$, is injective near infinity;
this enables one to find a shear $\psi$ \eqref{eq:psi} in Step (3) of the proof of 
Lemma \ref{lem:main} such that the resulting automorphism $\Phi=\phi\circ\psi\in\Aut(\C^2)$
maps all the discs $D_i$ out of the given ball $r\overline\B$. 
It is clear that the method in step (4) of the proof of Lemma \ref{lem:main}, using
the flow $\psi_t$ of a suitably chosen holomorphic vector field on $V$, still 
applies to a domain with corners, so we can bring the points at infinity back to
$\C^2$. Finally, we approximate the new conformal diffeomorphism 
$\psi_t\circ \tau$ sufficiently closely by a smooth conformal diffeomorphism from $M$ 
onto its image in the ambient surface $V$, 
thereby removing the corners. We leave further details to the reader.

\begin{proof}[Proof of Theorem \ref{th:FW}] 
Let $\B$ denote the unit ball of $\C^2$. Set $M_0=\CP^1\setminus \mathring D_0$; 
this is a closed disc. Choose a holomorphic embedding $f_0:M_0\hra \C^2$ and 
a compact, smoothly bounded, $\Oscr(\Omega)$-convex set 
$K_0\subset \Omega$. Its $\Oscr(\mathring M_0)$-convex hull is the union of $K_0$ 
with at most finitely many smoothly bounded open discs in $\mathring M_0$, each 
containing a disc from the family $\{D_i\}_{i\in\N}$. Hence, there are finitely many discs
$D_{j(1)},\ldots,D_{j(k_1)}$ such that, setting 
$
	M_1=M_0\setminus \bigcup_{i=1}^{k_1}\mathring D_{j(i)},
$ 
the set $K_0$ is $\Oscr(\mathring M_1)$-convex. Let $J_1=\N\setminus \{j(1),j(2),\ldots,j(k_1)\}$.
By Lemma \ref{lem:mainbis} we can approximate the embedding $f_0|_{M_1}$ 
as closely as desired uniformly on $K_0$ 
by a holomorphic embedding $f_1:M_1\hra\C^2$ such that 
\[
	f_1\big(bM_1 \cup \bigcup_{i\in J_1} D_i\big) \subset \C^2\setminus \overline \B.
\]
Since $\mathring M_1 \setminus \bigcup_{i\in J_1} D_i =\Omega$, there is a 
compact, smoothly bounded, $\Oscr(\Omega)$-convex set $K_1\subset \Omega$
with $K_0\subset \mathring K_1$ such that 
$f_1(M_1\setminus \mathring K_1)\subset  \C^2\setminus \overline \B$. 
By the same argument as above, we can find finitely many discs 
$D_{j(k_1+1)},\ldots, D_{j(k_2)}$ from the given family such that, setting 
\[
	M_2=M_1\setminus \bigcup_{i=k_1+1}^{k_2}\mathring D_{j(i)}
	= M_0 \setminus \bigcup_{i=1}^{k_2}\mathring D_{j(i)},
\]
the set $K_1$ is $\Oscr(\mathring M_2)$-convex. Let $J_2=\N\setminus \{j(1),j(2),\ldots,j(k_2)\}$.
By Lemma \ref{lem:mainbis} applied to the embedding
$f_1|_{M_2}$ and the polynomially convex set $L=\overline\B\subset\C^2$ 
we can approximate $f_1$ on $K_1$ by a holomorphic 
embedding $f_2:M_2\hra\C^2$ such that 
\[
	f_2\big(bM_2 \cup \bigcup_{i\in J_2} D_i\big) \subset \C^2\setminus 2\overline \B
	\ \ \ \text{and} \ \ \ 
	f_2(M_2\setminus \mathring K_1) \subset \C^2\setminus \overline \B.
\]
Hence, there is a smoothly bounded $\Oscr(\Omega)$-convex set 
$K_2\subset \Omega$ with $K_1\subset \mathring K_2$ such that 
\[
	f_2(M_2\setminus \mathring K_2)\subset  \C^2\setminus 2\overline \B. 
\]

Continuing inductively, we obtain the following:
\begin{enumerate}[(1)]
\item an increasing sequence $K_0\subset K_1\subset K_2 \subset \cdots$
of compact, smoothly bounded, $\Oscr(\Omega)$-convex domains with 
$K_j\subset \mathring K_{j+1}$ for each $j\in\Z_+$ and $\bigcup_{j=0}^\infty K_j=\Omega$, 
\item a decreasing sequence of circle domains $M_0\supset M_1 \supset \cdots$ 
such that $\bigcap_{i=0}^\infty M_i \supset \overline \Omega$ 
and $K_j$ is $\Oscr(\mathring M_{j+1})$-convex for each $j=0,1,2,\ldots$, and
\item a sequence of holomorphic embeddings $f_j:M_j\hra \C^2$ 
such that for every $j\in\N$, the map $f_j$ approximates $f_{j-1}$ uniformly on $K_{j-1}$ 
as closely as desired and it satisfies 
\begin{equation} \label{eq:fj}
	 f_j\big(bM_j \cup \bigcup_{i\in J_j} D_i\big) \subset \C^2\setminus j\overline \B 	
	\ \ \ \text{and} \ \ \
	f_j(M_j\setminus \mathring K_{j-1}) \subset \C^2\setminus (j-1)\overline \B.
\end{equation}
\end{enumerate}
Here, $J_j\subset \N$ is such that $\mathring M_j \setminus \bigcup_{i\in J_j} D_i =\Omega$.
The second condition in \eqref{eq:fj} gives
\begin{equation}\label{eq:fj1}
	f_j(K_{j} \setminus \mathring K_{j-1}) 
	\subset f_j(M_j\setminus \mathring K_{j-1})
	\subset  \C^2\setminus (j-1)\overline \B
	\quad\text{for all}\ \ j=1,2,\ldots.
\end{equation}

Assuming as we may that the approximation of $f_{j-1}$ by $f_j$ is sufficiently close at 
every step, these conditions clearly imply that the sequence $f_j$ converges uniformly on 
compacts in $\Omega$ to a proper holomorphic embedding 
$f=\lim_{j\to\infty} f_j :\Omega\hra\C^2$.
\end{proof}

%
%
\begin{remark}
To prove Theorem \ref{th:countably-RS} in this special case, 
we modify the above construction so that
for each $j=1,2,\ldots$ we first pick a point $\alpha_j \in \Omega\setminus K_{j-1}$
and then choose the next embedding $f_{j}:M_{j}\hra\C^2$ so that it approximates 
$f_{j-1}$ on $K_{j-1}$, it satisfies 
\[
	f_{j}(\alpha_j)=\beta_j\in B
	\ \ \ \text{and}\ \ \ 
	f_j\big(bM_j \cup \bigcup_{i\in J_j} D_i\big) \subset \C^2\setminus j\overline \B,
\]
and $f_j$ agrees with $f_{j-1}$ at the previously chosen
points $\alpha_1,\ldots,\alpha_{j-1}\in K_{j-1}$ so that $f_j(\alpha_i)=\beta_i\in B$ 
holds for $i=1,\ldots,j$. We then pick the next compact, smoothly bounded, 
$\Oscr(\Omega)$-convex set $K_j\subset \Omega$ such that 
\begin{equation}\label{eq:fjbis}
	K_{j-1}\cup\{\alpha_j\}\subset \mathring K_j\ \ \text{and}\ \  
	f_j(bK_j) \subset \C^2\setminus j\overline\B.
\end{equation}
(However, we cannot fulfil condition \eqref{eq:fj1} due to the
interpolation condition $f_{j}(\alpha_j)=\beta_j$, since there is no assumption on 
the set $B=\{\beta_j\}\subset\C^2$.)
%
%
%
By choosing the set $K_j\subset \Omega$ big enough at every step to ensure
that $\bigcup_{j=1}^\infty K_j=\Omega$, condition \eqref{eq:fjbis} ensures
that the limit holomorphic embedding $f=\lim_{j\to\infty} f_j :\Omega\hra\C^2$
is almost proper. 

The general case of Theorem \ref{th:countably-RS} is proved in the following section.
\end{remark}

%
%
\begin{remark}
A geometric disc in a Riemann surface $R$ is the image of a round
disc in the universal covering space $\wt R\in \{\CP^1,\C, \D\}$ of $R$.
A circle domain in $R$ is a domain all of whose complementary
connected components are closed geometric disks and points (punctures). 
By He and Schramm \cite[Theorem 0.2]{HeSchramm1993}, every open
Riemann surface with finite genus and at most countably many ends
is conformally equivalent to a circle domain $\Omega$ in a compact Riemann surface 
$R$. If Lemma \ref{lem:mainbis} can be proved for such domains without punctures,
it would follow that any such domain embeds properly holomorphically in $\C^2$.
\end{remark}


\section{Proof of Theorem \ref{th:countably-RS}}\label{sec:countably-RS}

We shall need the following generalization of \cite[Lemma 2.2]{ForstnericWold2013}
to domains in an arbitrary compact Riemann surface $R$.

%
%
%
%
\begin{lemma}\label{lem:surrounding}
Assume that $R$ is a compact Riemann surface of genus $\nu$ 
and $\Omega$ is a connected open domain in $R$ of the same genus $\nu$. 
Given a closed set $L$ in $R$ which is a union of connected components of 
$R\setminus \Omega$ and an open set $V \subset R$ containing $L$,
there exist finitely many pairwise disjoint, smoothly bounded closed discs
$\overline{\Delta}_i\subset V$ $(i=1,\ldots,m)$ such that 
\begin{equation}\label{eq:surround}
	L\subset \bigcup_{i=1}^m \Delta_i\quad \text{and}\quad
	\bigcup_{i=1}^m b\Delta_i \subset \Omega.
\end{equation}
\end{lemma}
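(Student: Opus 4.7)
The plan is to reduce the statement to the planar case \cite[Lemma 2.2]{ForstnericWold2013}, which will be applied inside each component of the complement of a well-chosen ``full-genus'' compact subsurface of $\Omega$. I would first use the genus hypothesis to choose a compact connected subsurface $\Sigma\subset\Omega$ of genus $\nu$ with smooth boundary $b\Sigma\subset\Omega$; such a $\Sigma$ exists because the genus of an open surface is realized by some compact subsurface. Writing $R\setminus\mathring\Sigma=\bigsqcup_{i=1}^N W_i$ with connected components $W_i$ of genus $g_i$ and $b_i$ boundary circles, additivity of the Euler characteristic together with $\chi(R)=2-2\nu$ and $\sum_i b_i=b_\Sigma$ yields
\[
\sum_{i=1}^N g_i + b_\Sigma = N.
\]
Since $b_i\ge 1$ for every $i$, this forces $g_i=0$, $b_i=1$, and $N$ finite: each $W_i$ is a closed topological disc in $R$.

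Next, I would enlarge $\Sigma$ so that every $W_i$ meeting $L$ is contained in $V$. Pick an exhaustion $\Sigma_1\subset\Sigma_2\subset\cdots$ of $\Omega$ by compact connected subsurfaces of genus $\nu$; the components of $R\setminus\mathring\Sigma_k$ form a nested decreasing family whose intersection is $R\setminus\Omega$. Any component of $R\setminus\Omega$ not contained in $L$ is disjoint from the closed set $L$, hence bounded away from it, so combined with the compactness of $R\setminus V$ one obtains that for $k$ sufficiently large every component of $R\setminus\mathring\Sigma_k$ meeting $L$ lies inside $V$. Replace $\Sigma$ by such a $\Sigma_k$ and discard those $W_i$ disjoint from $L$.

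On each remaining $W_i\subset V$ the problem becomes planar: conformally identify $W_i$ with a closed disc in $\CP^1$. Since $W_i$ is a component of $R\setminus\mathring\Sigma$, any component $K$ of $R\setminus\Omega$ meeting $W_i$ is entirely contained in $W_i$; hence $L\cap W_i$ is a union of connected components of $\CP^1\setminus(\Omega\cap\mathring W_i)$, and $V\cap\mathring W_i$ is an open neighbourhood of it. Applying \cite[Lemma 2.2]{ForstnericWold2013} produces finitely many pairwise disjoint, smoothly bounded closed discs inside $V\cap\mathring W_i$ that cover $L\cap W_i$ and whose boundaries lie in $\Omega$. Taking the union over all remaining $W_i$ gives the desired finite family, pairwise disjoint because the $\mathring W_i$ are, whose union covers $L$ and all of whose boundaries lie in $\Omega$.

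The main obstacle is the second step: making rigorous the intuition that components of $R\setminus\mathring\Sigma_k$ collapse onto components of $R\setminus\Omega$. The argument requires combining the compactness of $R\setminus V$ with the separation of non-$L$ components of $R\setminus\Omega$ from $L$ by open neighbourhoods, in order to rule out a component of $R\setminus\mathring\Sigma_k$ straddling both $L$ and $R\setminus V$ for every $k$.
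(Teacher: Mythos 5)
Your proof follows the same route as the paper's: exhaust $\Omega$ by compact connected genus-$\nu$ subsurfaces with smooth boundary, use the Euler characteristic to see that the complementary components are closed discs, and argue that at a large enough stage those discs separate $L$ from $R\setminus V$. Two remarks.

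The ``bounded away'' reasoning in your second step does not close the gap you honestly flag. Each individual complementary component of $\Omega$ not contained in $L$ is indeed at positive distance from $L$, but there may be infinitely many such components accumulating onto $L$, so there is no uniform separation, and it does not yet follow that no disc of $R\setminus\mathring\Sigma_k$ can straddle both $L$ and $R\setminus V$ for all $k$. The argument that does close it (and is what the paper uses) is a nested-sequence one: suppose that for every $k$ some disc $W$ of $R\setminus\mathring\Sigma_k$ meets both $L$ and $R\setminus V$. Every such $W$ at level $k+1$ sits inside a disc at level $k$ that again meets both sets, and there are finitely many discs per level, so a K\"onig-lemma-type selection produces a decreasing chain $\overline W^{(k_1)}\supset\overline W^{(k_2)}\supset\cdots$ of ``bad'' discs. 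Their intersection is a nonempty connected component of $R\setminus\Omega$ that meets both $L$ and $R\setminus V$; but a component meeting $L$ must lie in $L$ (since $L$ is a union of components), contradicting $L\subset V$.

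Separately, your final step (invoking the planar Lemma 2.2 of Forstneri\v c and Wold inside each $W_i$) is superfluous: once you have arranged that every complementary disc $W_i$ of $R\setminus\mathring\Sigma$ meeting $L$ is contained in $V$, these $W_i$ themselves already constitute the desired family. They are pairwise disjoint, smoothly bounded, closed discs contained in $V$, their boundaries lie in $b\Sigma\subset\Omega$, and they cover $L$ in their interiors because $L\subset R\setminus\Omega$ is disjoint from $b\Sigma\subset\Omega$.
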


\begin{proof}
Let $K_1 \subset K_2\subset \cdots \subset \bigcup_{j=1}^\infty K_j=\Omega$
be an exhaustion by smoothly bounded compact connected sets
with $K_j\subset \mathring K_{j+1}$ for all $j\in\N$. By choosing $K_1$ big enough,
every set $K_j$ has the same genus $\nu$ as $R$, and hence 
$R\setminus K_j$ is the union of finitely many open discs 
$\mathcal U_j=\{U^j_1,\ldots,U^j_{m(j)}\}$ with pairwise disjoint
closures. We claim that for $j$ large enough there are discs 
$\Delta_1,\ldots, \Delta_m \in \mathcal U_j$ satisfying \eqref{eq:surround}. 
Indeed, if this is not the case, there is a decreasing sequence of closed discs 
$U^j_{k(j)}\supset U^{j+1}_{k(j+1)}$ such that $U^j_{k(j)}\cap L \neq\varnothing$ 
and $\overline{U^j_{k(j)}}\cap (R\setminus V)\neq\varnothing$ for each $j$; but then 
$\bigcap_{j=1}^\infty\overline{U^{j}_{k(j)}}$ would be a complementary 
component of $\Omega$ which is contained in $L$ and intersects $R\setminus V$,
a contradiction.  
\end{proof}

Given a compact subset $L\subset R\setminus \Omega$ and open smoothly bounded 
discs $\Delta_i\subset R$ $(i=1,\ldots,m)$ with pairwise disjoint closures satisfying 
\eqref{eq:surround}, we shall say that the set
\[
	\Gamma=\bigcup_{i=1}^m b\Delta_i\subset\Omega
\] 
is a {\em surrounding system} for $L$, or simply that $\Gamma$ surrounds $L$. The set 
\begin{equation}\label{eq:c(Gamma)}
	c(\Gamma)=\Omega\setminus \bigcup_{i=1}^m \overline \Delta_i
\end{equation}
is called the {\em core component} of $\Omega\setminus\Gamma$.
Note that if $\Gamma \subset\Omega$ surrounds $L$ and 
$\delta:[0,1)\to \Omega$ is a path such that $\delta(0)\in c(\Gamma)$ and $\delta$ 
has a limit point in $L$, then $\delta([0,1))\cap\Gamma\neq\varnothing$.

%
%
\begin{proof}[Proof of Theorem \ref{th:countably-RS}]
Let $R$ and $\Omega=R\setminus \bigcup_{i=0}^\infty D_i$ be as in the theorem, 
so $D_i$ are closed pairwise disjoint closed discs. Note that $\Omega$ 
has the same topological genus $\nu$ as $R$. Set 
\begin{equation}\label{eq:Mj}
	M_j= R \setminus \bigcup_{i=0}^{j}\mathring D_i\quad\text{for}\ \ j=0,1,\ldots.
\end{equation}
For all $j\ge 0$ we have that $\Omega\subset\mathring M_j$, 
$M_{j+1}\cup \mathring D_{j+1}=M_j$, $bM_{j+1}=bM_j\cup bD_{j+1}$, and 
$
	\Omega=\bigcap_{j=0}^\infty \mathring M_j.
$
By a {\em surrounding system} $\Gamma\subset \Omega$ for $M_j$, we shall mean a 
surrounding system for $R\setminus \mathring M_j=\bigcup_{i=0}^j D_i$.
Note that if $\delta:[0,1)\to\Omega$ is a path such that $\delta(0)\in c(\Gamma)$ 
(see \eqref{eq:c(Gamma)}) and $\delta([0,1))$ has a limit point in 
$bM_j=\bigcup_{i=0}^j bD_i$, then $\delta([0,1))$ intersects $\Gamma$.

Let $k\in\{0,1,\ldots\}$, $f:M_k\hra \C^2$, $K\subset \Omega$, 
$\epsilon>0$, and $B\subset\C^2$ be as in the statement of the theorem. 
We shall assume without loss of generality that the set $B=\{\beta_1,\beta_2,\ldots\}$ 
is infinite, and for simplicity of notation we assume that $k=0$ 
(the same argument will apply in the general case). 
Set $f_0=f:M_0=R\setminus\mathring M_0\hra \C^2$. 
Fix points $c_1,\ldots,c_l\in \Omega$ at which we wish to interpolate (see condition (ii) in 
the theorem). We may assume without loss of generality that 
$f_0(\{c_1,\ldots,c_l\})\cap B=\varnothing$. 
Using Lemma \ref{lem:surrounding} we find a smooth Jordan curve 
$\Gamma_0\subset\Omega$ surrounding the disc $D_0=R\setminus \mathring M_0$ 
such that $\Gamma_0\cap\{c_1,\ldots,c_l\}=\varnothing$
and $f_0$ vanishes nowhere on $\Gamma_0$; the last condition is easily arranged
by a small deformation. Then, choose a smoothly bounded compact 
connected domain $K_0\subset \Omega$ with genus $\nu$ such that 
$K\cup \{c_1,\ldots, c_l\}\cup \Gamma_0 \subset \mathring K_0$. 
Pick a point $\alpha_0\in\mathring K_0\setminus f_0^{-1}(B)$ 
and set $\beta_0=f_0(\alpha_0)$. We also let 
$K_{-1}=\varnothing$ and $\epsilon_0=\epsilon/2$.

We shall inductively construct a sequence of tuples 
$T_j=\{f_j,K_j,\Gamma_j,\epsilon_j,\alpha_j\}$, $j\in\N$, where
\begin{itemize}
\item $f_j: M_j\hra\C^2$ is a holomorphic embedding,
\smallskip
\item $K_j\subset \Omega$ is a smoothly bounded compact connected 
domain of genus $\nu$,\smallskip
\item $\Gamma_j\subset \mathring K_j$ is a surrounding system for $M_j$,
\smallskip
\item $\epsilon_j>0$ is a number, and
\smallskip
\item $\alpha_j\in \mathring K_j$ is a point,
\end{itemize}
such that
\begin{equation}\label{eq:exhaustion-u}
	\bigcup_{j=0}^\infty K_j=\Omega
\end{equation}
and the following conditions hold for all $j=1,2,\dots$: 
\begin{enumerate}[(1$_j$)]
\item $K_{j-1}\subset\mathring K_j$.
\smallskip
\item $\Gamma_j \cup\{\alpha_j\} \subset \mathring K_j\setminus K_{j-1}$ 
and $K_{j-1} \cup\{\alpha_j\} \subset c(\Gamma_j)$ (see \eqref{eq:c(Gamma)}). 
\smallskip
\item $\epsilon_j<\epsilon_{j-1}/2$ and every holomorphic map $\varphi: \Omega\to\C^2$ 
with $|\varphi-f_{j-1}|<2\epsilon_j$ on $K_{j-1}$ is an embedding on $K_{j-2}$.
(Recall that $K_{-1}=\varnothing$.) 
\smallskip
\item $\sup_{x\in K_{j-1}}|f_j(x)-f_{j-1}(x)|<\epsilon_j$.
\smallskip
\item $f_j(\alpha_i)=\beta_i$ for $i=0,1,\ldots,j$.
\smallskip
\item $f_j$ agrees with $f_{j-1}$ to a given order at $c_i\in K_0$ for $i=1,\ldots,l$.
\smallskip
\item $f_j(\Gamma_j)\cap j\overline\B=\varnothing$.
\end{enumerate}
The basis of the induction is given by 
$T_0=\{f_0,K_0,\Gamma_0,\epsilon_0,\alpha_0\}$. It meets conditions (1$_0$), (5$_0$), 
and (7$_0$), while the remaining conditions are void for $j=0$. 
For the inductive step, assume that we have tuples $T_0,\ldots,T_{j-1}$ satisfying the 
required conditions for some $j\ge 1$, and let us construct $T_j$. 
Choose $\epsilon_j>0$ so small that (3$_j$) holds. 
Next, choose a compact set $K_{j-1}'\subset \mathring M_j$ 
without holes in $\mathring M_j$ such that $K_{j-1}\subset\mathring K_{j-1}'$. 
(The set $K_{j-1}'$ need not be contained in $\Omega$.) 
Pick a point $\alpha_j\in \Omega\setminus K_{j-1}'$
(this set is nonempty since $K_{j-1}'\subset\mathring M_j$ is compact 
while $\Omega$ is not relatively compact in $\mathring M_j$). 
Lemma \ref{lem:main} applied to the embedding $f_{j-1}|_{M_j}:M_j\hra\C^2$, 
the compact set $K_{j-1}'\subset\mathring M_j$, the 
singletons $\{\alpha_j\}\subset\mathring M_j\setminus K_{j-1}'$ and 
$\{\beta_j\}\subset \C^2$, 
and the number $\epsilon_j>0$ furnishes a holomorphic embedding 
$f_j:M_j\hra\C^2$ satisfying (4$_j$)--(6$_j$) and 
\[ 
	f_j(bM_j)\cap j\overline\B=\varnothing.
\] 
Hence, there is an open set $V\subset R$ containing 
$R\setminus \mathring M_j=\bigcup_{i=0}^j D_i$ such that 
\[
	(K_{j-1}\cup \{\alpha_j\}) \cap \overline V=\varnothing
	\quad \text{and} \quad
	|f_j|>j\ \text{on}\ V\cap M_j.
\]
By Lemma \ref{lem:surrounding} there is a surrounding system 
$
	\Gamma_j=\bigcup_{i=0}^j \gamma_i\subset \Omega\cap V
$ 
for $M_j$ such that $\gamma_i=b\Delta_i$, where $\Delta_i\subset V$ is a 
disc containing $D_i$ and the closed discs $\overline \Delta_i$ for $i=0,1,\ldots, j$
are pairwise disjoint. Hence, $\Gamma_j\cap K_{j-1}=\varnothing$, 
$K_{j-1}\cup\{\alpha_j\}\subset c(\Gamma_j)$ (this is the second condition in (2$_j$)), 
and (7$_j$) holds. Finally, choose any smoothly bounded compact connected domain 
$K_j\subset \Omega$ containing $K_{j-1}\cup \{\alpha_j\} \cup \Gamma_j$ in its interior.
Hence, $K_j$ is of genus $\nu$ (the same as the genus of $R$) and conditions (1$_j$) and (2$_j$) hold. 
The induction may now proceed. Note that condition \eqref{eq:exhaustion-u} 
can be fulfilled since we may choose $K_j\subset \Omega$ as large as desired at each step.

As in the proof of Theorem \ref{th:dense}, there is a limit map
$
	F=\lim_{j\to\infty}f_j:\bigcup_{j=0}^\infty K_j=\Omega\hra\C^2
$
which is an injective holomorphic immersion and satisfies conditions (i), (ii), and (iii) in the 
statement of the theorem; note that $B=F(\{\alpha_j: j\in\N\})\subset F(\Omega)$. 
%
%
Finally, conditions (2$_j$)--(4$_j$) and (7$_j$) guarantee that $\inf_{x\in \Gamma_j=b(c(\Gamma_j))}|F(x)|>j-\epsilon$ for all $j\in\N$. Since $c(\Gamma_1)\Subset c(\Gamma_2)\Subset \cdots \subset \bigcup_{j\in\N}c(\Gamma_j)=\Omega$
is an exhaustion of $\Omega$ by connected, smoothly bounded compact domains in view of \eqref{eq:exhaustion-u}, (1$_j$), and (2$_j$), this inequality shows that the map $F:\Omega\to\C^2$ is almost proper. 
\end{proof}


\section{Proof of Theorem \ref{th:Cantor}}\label{sec:Cantor}

We begin by recalling a construction of a Cantor set in a  
domain $\Omega_0\subset\C$. 
In the first step, we choose a smoothly bounded compact convex domain 
$\Delta_0\subset \Omega_0$. Removing from $\Delta_0$ an open 
neighbourhood $\Upsilon_0$ of the vertical straight line segment divides 
$\Delta_0$ in two smoothly bounded compact convex subsets $\Delta_0^1$ 
and $\Delta_0^2$ of the same width. Finally, for $j=1,2$ we remove from $\Delta_0^j$ 
an open neighborhood $\Upsilon_0^j$ of the horizontal straight line segment 
dividing $\Delta_0^j$ in two convex subsets of the same height, making sure that the two
connected components of $\Delta_0^j\setminus \Upsilon_0^j$ are smoothly bounded 
compact convex domains. This gives a compact set 
\begin{equation}\label{eq:Omega1}
	\Omega_1=\Delta_0\setminus 
	(\Upsilon_0\cup\Upsilon_0^1\cup\Upsilon_0^2)
	\subset \mathring \Omega_0
\end{equation}
which is the union of four pairwise disjoint, smoothly bounded compact convex  domains
$\Omega_1^j$, $j=1,\ldots,4$.
In the second step, we repeat the same procedure for each convex compact domain 
$\Omega_1^j$ from the first generation, thereby getting four pairwise disjoint smoothly 
bounded compact convex domains in its interior. This gives a compact set 
$\Omega_2\subset\mathring \Omega_1$ which is the union of sixteen smoothly bounded 
compact convex domains. Continuing inductively, we obtain a decreasing sequence 
of smoothly bounded compact domains
\begin{equation}\label{eq:C}
	\Omega_1\Supset\Omega_2\Supset \Omega_3 \Supset\cdots
\end{equation}
such that for each $i\ge 1$ the domain 
$\Omega_i$ consists of $4^i$ pairwise disjoint smoothly bounded 
compact convex domains. 
The intersection
\begin{equation}\label{eq:C=cap}
	C = \bigcap_{i=1}^\infty \Omega_i\subset \Omega_0
\end{equation}
is then a Cantor set in $\C$. Moreover, choosing the separating neighbourhoods sufficiently 
small at each step of the construction, we may ensure that $\mu(C)>\mu(\Delta_0)-\delta$ 
for any given $\delta>0$, where $\mu$ denotes the $2$-dimensional Lebesgue measure 
on $\C$.

We now explain the proof of Theorem \ref{th:Cantor}. 
Let $R$ be a compact Riemann surface and $B\subset \C^2$ be a countable set. 
Assume that $B=\{\beta_1,\beta_2,\ldots\}$ is infinite and $0\notin B$.
Let $\Omega_0$ be a smoothly bounded compact convex domain in a holomorphic 
coordinate chart on $R$ such that there is a holomorphic embedding 
$f_0: R\setminus\mathring \Omega_0 \hra \C^2$. 
Such a set and embedding $f_0$ always exists; 
we may for instance choose $\Omega$ to be the complement of a small open neighbourhood 
of the curves in a suitable homology basis of $R$. Fix $\epsilon_0>0$, set 
$K_0=R\setminus\mathring \Omega_0$ and $K_{-1}=\varnothing$, 
and assume without loss of generality that there is a number $r_0>0$ satisfying
\begin{equation}\label{eq:inff0-C}
	f_0(K_0)\cap r_0\overline\B=\varnothing.
\end{equation}
(Cf.\ \eqref{eq:inff0}; recall that $\B$ is the unit ball in $\C^2$.) 
Choose a point $\alpha_0\in \mathring K_0\setminus f_0^{-1}(B)$ and 
set $\beta_0=f_0(\alpha_0)$. Also choose any sequence $0<r_0<r_1<r_2<\cdots$ with 
$\lim_{j\to\infty}r_j=+\infty$.

Let $\Delta_0\subset\mathring \Omega_0$ be a smoothly bounded compact convex domain 
so large that $f_0$ extends to a holomorphic embedding 
$f_0: R\setminus \mathring\Delta_0\hra\C^2$ satisfying 
$f_0(R\setminus\mathring\Delta_0)\cap r_0\overline\B=\varnothing$. 
Choose a positive number $\epsilon_1<\epsilon_0/2$. An application of Mergelyan theorem 
in two steps (see \cite[Theorem 16]{FornaessForstnericWold2020})
furnishes a compact set $\Omega_1$ as in \eqref{eq:Omega1}, 
consisting of four pairwise disjoint smoothly bounded compact convex domains, 
and a holomorphic embedding $\tilde f_0: K_1=R\setminus\mathring\Omega_1\hra \C^2$ 
such that
\begin{equation}\label{eq:tildef0}
	\tilde f_0(K_1\setminus \mathring K_0)\cap r_0\overline\B=\varnothing	
	\quad\text{and}\quad\sup_{x\in K_0}|\tilde f_0(x)-f_0(x)|<\epsilon_1/2.
\end{equation}
Indeed, we first extend $f_0$ to a smooth embedding 
$f_0: (R\setminus\mathring\Delta_0)\cup E_0\hra\C^2$, where $E_0$ is the vertical straight 
line segment dividing $\Delta_0$ in two convex subsets of the same width, such that 
$f_0(E_0)\cap r_0\overline\B=\varnothing$. Note that $E_0$ intersects 
$R\setminus \mathring\Delta_0$ only at its endpoints and the intersections are transverse. 
By Mergelyan theorem (see \cite[Theorem 16]{FornaessForstnericWold2020})
we can then approximate $f_0$ in the $\Cscr^1$ topology on 
$(R\setminus\mathring \Delta_0)\cup E_0$ by a holomorphic embedding 
$f_0': (R\setminus\mathring\Delta_0)\cup\overline\Upsilon_0\hra\C^2$, where 
$\Upsilon_0$ is a neighborhood of $E_0$ in $\Delta_0$ as explained above, 
such that 
\[
	f_0'((\Omega_0\setminus\mathring\Delta_0)\cup\overline\Upsilon_0)\cap 
	r_0\overline\B=\varnothing. 
\]
We then repeat the process simultaneously in the two components $\Delta_0^1$
and $\Delta_0^2$ of 
$\Delta_0\setminus\Upsilon_0$: we suitably extend $f_0'$ to $E_0^1\cup E_0^2$, 
where $E_0^j$, $j=1,2$, is the horizontal straight line segment dividing $\Delta_0^j$ in 
two convex subsets of the same height, and apply Mergelyan theorem to approximate $f_0'$
in the $\Cscr^1$ topology on 
$(R\setminus\mathring \Delta_0)\cup\overline\Upsilon_0\cup E_0^1\cup E_0^2$ 
by a holomorphic embedding $\tilde f_0: \Omega_1\hra\C^2$
which satisfies the conditions in \eqref{eq:tildef0} for 
$K_1=R\setminus \mathring \Omega_1$.
Note that $K_0\subset\mathring K_1$ and choose a point 
$\alpha_1\in \mathring K_1\setminus K_0$. Arguing as in the proof of Theorem \ref{th:dense}, 
we may use Lemma \ref{lem:main} to obtain a holomorphic embedding $f_1: K_1\hra\C^2$ 
satisfying the following conditions:
\begin{enumerate}[\rm (a)]
\item $\sup_{x\in K_0}|f_1(x)-\tilde f_0(x)|<\epsilon_1/2$. Hence, 
$\sup_{x\in K_0}|f_1(x)-f_0(x)|<\epsilon_1$ by \eqref{eq:tildef0}.
\smallskip
\item $f_1(\alpha_i)=\beta_i$ for $i=0,1$.
\smallskip
\item $f_1(b K_1)\cap r_1\overline\B=\varnothing$.
\smallskip
\item $f_1(K_1\setminus\mathring K_0)\cap \min\{r_0,|\beta_1|/2\} \overline\B=\varnothing$.
\end{enumerate}

We repeat this procedure inductively, following the recursive construction of a Cantor set $C$ 
in $\Omega$ described above; see \eqref{eq:C} and \eqref{eq:C=cap}. In this way, we  
construct a sequence of tuples $T_j=\{f_j,K_j,\epsilon_j,\alpha_j\}$, $j\in\N$, where
\begin{itemize}
\item $K_j=R\setminus \Omega_j$, where $\Omega_j$ is a domain in $\Omega_0$ 
consisting of $4^j$ pairwise disjoint smoothly bounded compact convex domains,
\smallskip
\item $f_j: K_j\hra\C^2$ is a holomorphic embedding,
\smallskip
\item $\epsilon_j>0$ is a number, and
\smallskip
\item $\alpha_j\in \mathring K_j\setminus K_{j-1}$ is a point,
\end{itemize}
such that $T_j$ satisfies $K_{j-1}\subset\mathring K_j$ and conditions (2$_j$)--(4$_j$), (6$_j$), 
and (7$_j$) in the proof of Theorem \ref{th:dense} for all $j\in\N$ 
(with $\mathring M$ replaced by $K_{j-1}$ in (3$_j$)), and we have that
\[
	C = R \setminus \bigcup_{j\ge 0} K_j =\bigcap_{j\ge 0}\Omega_j
\]
is a Cantor set in $R$. 
As in the proof of Theorem \ref{th:dense}, there is a limit map
\[
	F=\lim_{j\to\infty} f_j : R \setminus C=\bigcup_{j\ge 0} K_j\to\C^2
\]
which is an almost proper injective holomorphic immersion and satisfies 
\[
	B=F(\{\alpha_j: j\ge 1\})\subset F(R\setminus C).
\]
Moreover, the map $F: R\setminus C\to\C^2$ is proper, and hence a proper holomorphic embedding, if the given set $B$ is closed in $\C^2$ and discrete. 
This concludes the proof of Theorem \ref{th:Cantor}.


\section{Proof of Theorem \ref{th:pseudoconvex}}\label{sec:pseudoconvex}

Let $S$ be an open Riemann surface. Fix an exhaustion
\begin{equation}\label{eq:Sj}
	S_0\Subset S_1\Subset S_2\Subset \cdots \Subset \bigcup_{j\ge 0} S_j=S
\end{equation}
of $S$ by connected, smoothly bounded, compact domains without holes in $S$ 
such that $S_0$ is a closed disc and the Euler characteristic of 
$S_j\setminus\mathring S_{j-1}$ equals $0$ or $-1$ for all $j\ge 1$ 
(see \cite[Lemma 4.2]{AlarconLopez2013JGEA}).
Also set $D_{-1}=D_0=\varnothing$ and let
\begin{equation}\label{eq:Dj}
	D_1\Subset D_2\Subset\cdots\Subset \bigcup_{j\ge 1} D_j=D
\end{equation}
be an exhaustion of $D$ by smoothly bounded, polynomially convex, strongly pseudoconvex, 
Runge domains  \cite[Sec.\ 2.3]{Forstneric2017E}. Assume that $B=\{\beta_1,\beta_2,\ldots\}$ is 
infinite, set $m_0=0$, and for each $j\in\N$ denote by $m_j$ the unique integer such that  
\begin{equation}\label{eq:mj}
	\beta_j\in \overline D_{m_j+1}\setminus \overline D_{m_j}.
\end{equation}
Fix $\epsilon_0>0$ and set $K_0=S_0$ and $K_{-1}=\varnothing$. Let $f_0: K_0\hra D$ be a 
holomorphic embedding  with $f_0(K_0)\subset D_1$; recall that $K_0$ is a disc. 
Choose $\alpha_0\in\mathring K_0$ such that $f_0(\alpha_0)\notin B$ and set 
$\beta_0=f_0(\alpha_0)\in D_1$.  We shall construct a sequence 
$X_j=\{K_j,f_j,\epsilon_j,\alpha_j\}$, $j\in\N$, where
\begin{itemize}
\item $K_j\subset S$ is a connected, smoothly bounded, compact domain without holes in $S$,
\smallskip
\item $f_j: K_j\hra D$ is a holomorphic embedding,
\smallskip
\item $\epsilon_j>0$ is a positive number, and
\smallskip
\item $\alpha_j\in \mathring K_j\setminus K_{j-1}$ is a point,
\end{itemize}
such that the following conditions hold for all $j\in\N$:
\begin{enumerate}[(1$_j$)]
\item $K_{j-1}\Subset K_j\subset S_j$ and $K_j$ is 
%
%
diffeotopic to $S_j$.
\smallskip
\item $\sup_{x\in K_{j-1}}|f_j(x)-f_{j-1}(x)|<\epsilon_j$.
\smallskip
\item $\epsilon_j<\epsilon_{j-1}/2$ and every holomorphic map $\varphi: K_{j-1}\to\C^2$ 
with $|\varphi-f_{j-1}|<2\epsilon_j$ everywhere on $K_{j-1}$ is an embedding on $K_{j-2}$.
\smallskip
\item $f_j(\alpha_i)=\beta_i$ for all $i\in\{0,\ldots,j\}$.
\smallskip
\item $f_j(bK_i)\cap \overline D_i=\varnothing$ for all $i\in\{0,\ldots,j\}$.
\smallskip
\item $f_j(K_j\setminus\mathring K_{j-1})\cap \overline D_{j-1}\cap \overline D_{m_j}=\varnothing$.
\smallskip
\item $\length(f_j\circ\gamma)>1$ for every path $\gamma:[0,1]\to K_j$ with $\gamma(0)\in K_{j-1}$ and $\gamma(1)\in bK_j$.
\end{enumerate}

Assume for a moment that such a sequence exists. Conditions \eqref{eq:Sj} and (1$_j$) 
imply that
\begin{equation}\label{eq:M=S}
	M=\bigcup_{j\in\N} K_j\subset S
\end{equation}
is a domain in $S$ that is 
%
%
diffeotopic to $S$. 
In particular, there is a complex structure $J$ on $S$ such that the open Riemann surface 
$(S,J)$ is biholomorphic to $M$. (For details in a similar setting, see 
\cite[proof of Theorem 1.4 (b) and Corollary 1.5]{AlarconDrinovecForstnericLopez2015PLMS}.)
By (1$_j$), (2$_j$), (3$_j$), and the maximum principle, there is a limit map 
\[
	F=\lim_{j\to\infty}f_j: M\to D,
\]
which is an injective holomorphic immersion and satisfies
\begin{equation}\label{eq:2ej}
	\sup_{x\in K_{j-1}}|F(x)-f_{j-1}(x)|<2\epsilon_j,\quad j\in\N.
\end{equation}
We claim that if each $\epsilon_j>0$ is chosen sufficiently small then $F$ satisfies 
the conclusion of Theorem \ref{th:pseudoconvex}. Indeed, if every $\epsilon_j>0$ is small enough 
then \eqref{eq:M=S}, \eqref{eq:2ej}, and conditions (1$_j$) and (7$_j$) guarantee 
that $\length (F\circ\gamma)=+\infty$ for 
every proper path $\gamma: [0,1)\to M$, and hence $F$ is complete. Likewise, \eqref{eq:2ej} 
and conditions (5$_j$) imply that $F(bK_j)\cap \overline D_j=\varnothing$ for all 
$j\in\N$ whenever the $\epsilon_j$'s are sufficiently small, and hence $F: M\to D$ is an 
almost proper map in view of \eqref{eq:Dj}, \eqref{eq:M=S}, and conditions (1$_j$). 
Indeed, if $Q\subset D$ is compact and we take $m\in\N$ so large that $Q\subset D_m$, 
then $F^{-1}(Q)\cap bK_j=\varnothing$ for every $j>m$, hence all components of 
$F^{-1}(Q)$ are compact in $M$. Conditions (4$_j$) give 
that $B=\{\beta_1,\beta_2,\ldots\}\subset F(M)$. Finally, if each $\epsilon_j>0$ 
is chosen sufficiently small then 
$F(K_j\setminus\mathring K_{j-1})\cap \overline D_{j-1}\cap \overline D_{m_j}=\varnothing$ for all 
$j\in\N$ by \eqref{eq:2ej} and conditions (6$_j$), while if the given set $B\subset D$ is closed 
and discrete then $\lim_{j\to\infty}\min\{j-1,m_j\}=+\infty$; see \eqref{eq:Dj} and \eqref{eq:mj}. 
These conditions imply that $F: M\hra D$ is a proper map, and hence a proper holomorphic  
embedding, provided that $B$ is closed in $D$ and discrete. Therefore, $F$ satisfies the 
conclusion of the theorem.

To complete the proof, it remains to explain the induction. The basis is given by the tuple 
$X_0=\{K_0,f_0,\epsilon_0,\alpha_0\}$; it satisfies (1$_0$), (4$_0$), (5$_0$), and (6$_0$), 
while the remaining conditions are void for $j=0$. Fix $j\in\N$ and assume that we have 
a tuple $X_{j-1}=\{K_{j-1},f_{j-1},\epsilon_{j-1},\alpha_{j-1}\}$ fulfilling conditions
(1$_{j-1}$), (4$_{j-1}$), (5$_{j-1}$), and (6$_{j-1}$). We distinguish two cases. 

\medskip\noindent
{\em Case 1: The Euler characteristic of $S_j\setminus\mathring S_{j-1}$ is $0$.} In 
this case, \eqref{eq:Sj} and (1$_{j-1}$) imply that $K_{j-1}\subset \mathring S_j$ and 
$K_{j-1}$ is a 
strong deformation retract of $S_j$. Choose an integer $d> \max\{j,m_j+1\}$ so large that 
$f_{j-1}(K_{j-1})\subset D_d$ and set $d'=\min\{j-1,m_j\}<d$. 
Assume without loss of generality that $\beta_j\notin f_{j-1}(K_{j-1})$. 
By (5$_{j-1}$) and \eqref{eq:mj} we have that 
\begin{equation}\label{eq:DdDd'}
	f_{j-1}(bK_{j-1})\cup\{\beta_j\}\subset D_d\setminus \overline D_{d'}. 
\end{equation}
Pick a point $a\in bK_{j-1}$ and attach to $K_{j-1}$ a smooth embedded arc 
$\eta\subset \mathring S_j$ such that $\eta\cap K_{j-1}=\{a\}$ and the intersection of 
$bK_{j-1}$ and $\eta$ is transverse at $a$. On the image side, 
choose a smoothly embedded arc 
\begin{equation}\label{eq:lambdaDd}
	\lambda\subset D_d\setminus \overline D_{d'}
\end{equation} 
such that $\lambda$ agrees with $f_{j-1}(\eta)$ near the endpoint $f_{j-1}(a)$, 
$\lambda\cap f_{j-1}(K_{j-1})=f_{j-1}(a)$, and the other endpoint of $\lambda$ equals $\beta_j$. 
Let $\alpha_j$ denote the other endpoint of $\eta$ and extend $f_{j-1}$ to a smooth 
diffeomorphism $\eta\to\lambda$ such that $f_{j-1}(\alpha_j)=\beta_j$.
By Mergelyan theorem (see \cite[Theorem 16]{FornaessForstnericWold2020})
we may assume in view of (5$_{j-1}$), \eqref{eq:DdDd'}, and \eqref{eq:lambdaDd} that 
there is a connected, smoothly bounded, compact domain $K\subset S$ without holes such that 
\begin{enumerate}[\rm (i)]
\item $\alpha_j\in K_{j-1}\cup\eta\Subset K\subset \mathring S_j$, 
\smallskip
\item $K_{j-1}$ is a strong deformation retract of $K$, and 
\smallskip
\item $f_{j-1}: K\hra D_d$ is a holomorphic embedding satisfying
\[
	f_{j-1}(\alpha_j)=\beta_j\quad \text{and}\quad 
	f_{j-1}(\overline{K\setminus K_{j-1}})\cap \overline D_{d'} =\varnothing.
\] 
\end{enumerate}
It follows that $K$ is a strong deformation retract of $S_j$ as well. 
By Charpentier and Kosi\'{n}ski \cite[Lemma 2.4]{CharpentierKosinski2020} 
there is a compact polynomially convex set $\Gamma\subset D_{d+1}\setminus \overline D_d$
whose connected components are holomorphically contractible (for example, convex)
such that $\overline D_d\cup\Gamma$ is polynomially convex 
and $\length(\sigma)>1$ for every path $\sigma:[0,1]\to D\setminus \Gamma$ with 
$\sigma(0)\in \overline D_d$ and $\sigma(1)\in D\setminus D_{d+1}$.
It follows that the compact set 
\[
	L=\overline D_{d'}\cup \Gamma= (\overline D_{j-1}\cap \overline D_{m_j})\cup\Gamma
\]
is also polynomially convex, and 
$f_{j-1}(\overline{K\setminus K_{j-1}})\cap L =\varnothing$ by (iii). Lemma \ref{lem:main} furnishes a holomorphic embedding $f_j: K\hra \C^2$ 
satisfying the following conditions:
\begin{enumerate}[\rm (a)]
\item $f_j(bK)\cap \overline D_{d+1}=\varnothing$.
\smallskip
\item $f_j(\overline{K\setminus K_{j-1}})\cap L=\varnothing$.
\smallskip
\item $\sup_{x\in K_{j-1}\cup\eta}|f_j(x)-f_{j-1}(x)|<\epsilon_j$ (note that $K_{j-1}\cup\eta$ 
has no holes in $\mathring K$).
\smallskip
\item $f_j(\alpha_i)=f_{j-1}(\alpha_i)=\beta_i$ for all $i\in\{0,\ldots,j\}$ (see (iii) and (4$_{j-1}$)).\smallskip
\item $f_j(bK_i)\cap\overline D_i=\varnothing$ for all $i\in\{0,\ldots,j-1\}$ (see (5$_{j-1}$)).
\end{enumerate}
Here, $\epsilon_j>0$ is so small that condition (3$_j$) holds and 
\begin{equation}\label{eq:Dd}
	f_j(K_{j-1}\cup\eta)\subset D_d
\end{equation}
(see (iii) and (c)). Note that \eqref{eq:Dd} ensures that the point $\alpha_j\in\eta$ lies in the 
connected component of $f_j^{-1}(D_d)$ containing $K_{j-1}$. This, 
the maximum principle, and conditions (i) and (a) 
guarantee the existence of a connected, smoothly bounded, compact domain 
$K_j\subset \mathring K$ without holes in $S$ satisfying (1$_j$),
\begin{equation}\label{eq:Kj}
	\alpha_j\in \mathring K_j,\quad f_j(bK_j)\cap\overline D_{d+1}
	=\varnothing,\quad \text{and}\quad f_j(K_j)\subset D.
\end{equation}
Condition (c) implies (2$_j$); (d) ensures (4$_j$); (e), \eqref{eq:Kj}, and $d>j$ give (5$_j$); 
(b) implies (6$_j$); and (b), \eqref{eq:Dd}, \eqref{eq:Kj}, and the properties of $\Gamma$ 
ensure (7$_j$). This closes the induction in this case.

\medskip\noindent
{\em Case 2: The Euler characteristic of $S_j\setminus\mathring S_{j-1}$ equals $-1$.} 
In this case, there is a smooth Jordan arc $E\subset \mathring S_j\setminus \mathring K_{j-1}$, 
transversely attached with its two endpoints to $bK_{j-1}$ and otherwise disjoint from $K_{j-1}$, 
such that $K_{j-1}\cup E$ is a strong deformation retract of $S_j$. Given $\epsilon>0$, an 
application of Mergelyan theorem (see \cite[Theorem 16]{FornaessForstnericWold2020}) 
furnishes a connected, smoothly bounded, compact domain $K\subset \mathring S_j$, 
without holes in $S$, such that $K_{j-1}\cup E\subset \mathring K$ and $K$ is a strong 
deformation retract of $S_j$, and a holomorphic embedding $g: K\hra D$ such that 
$\sup_{x\in K_{j-1}}|g(x)-f_{j-1}(x)|<\epsilon$, 
$g(\alpha_i)=\beta_i$ for all $i\in\{0,\ldots,j-1\}$ (see (4$_{j-1}$)),
$g(bK_i)\cap\overline D_i=\varnothing$ for all $i\in\{0,\ldots,j-1\}$, and 
$g(K\setminus \mathring K_{j-1})\cap \overline D_{j-1}=\varnothing$ (see (5$_{j-1}$)). 
See \cite[p.\ 216, Case 1]{Alarcon2020JAM} for the details in a very similar situation. 
This reduces the proof of the inductive step to Case 1.
This closes the induction and completes the proof of Theorem \ref{th:pseudoconvex}.


\section{Proof of Theorems \ref{th:main} and \ref{th:main2}}\label{sec:new}

%
%
For simplicity of exposition we shall prove these results in the case when the open Riemann surface $S$ is a disc, say, $S=2\D=\{\zeta\in\C: |\zeta|<2\}$. In particular, the domain $M\subset S$ in Theorem \ref{th:main} must be a disc, while the one in Theorem \ref{th:main2} must be a planar domain. The general cases are seen by combining the proof in these special cases with the procedure to prescribe the topology in the proof of Theorem \ref{th:pseudoconvex}; we leave the details to interested readers.

%
%
\begin{proof}[Proof of Theorem \ref{th:main}]
Let $X\subset\C^2$ and $B=\{\beta_j\}_{j\in\N}\subset X$ be as in the statement. 
Let us assume that $S=2\D$. Set $D_0=\overline \D$ and choose a  
holomorphic embedding $f_0:D_0\to X$. Assume that $f_0(0)\notin B$ 
and set $\alpha_0=0$, $\beta_0=f(\alpha_0)$, and $D_{-1}=\varnothing$. Fix a number 
$\epsilon_0>0$. We shall inductively construct a sequence of smoothly bounded closed 
discs $D_j\subset 2\D$, points $\alpha_j\in \mathring D_j$, holomorphic embeddings 
$f_j: D_j\to X$, and numbers $\epsilon_j>0$ satisfying the following conditions for all 
$j\in\N=\{1,2,\ldots\}$:
%
%
\begin{enumerate}[\rm (i$_j$)]
\item $D_{j-1}\subset \mathring D_j$.
\item $|f_j-f_{j-1}|<\epsilon_j$ on $D_{j-1}$.
\item $f_j(\alpha_k)=\beta_k$ for all $k\in\{0,\ldots,j\}$.
\item $\dist_{f_j}(0,bD_j)>j$,
%
%
where $\dist_{f_j}$ denotes the distance function on $D_j$ associated to the Riemannian metric induced on $D_j$ by the Euclidean one in $\C^2$ via the immersion $f_j$.
\item $\epsilon_j<\epsilon_{j-1}/2$ and every holomorphic map $\varphi: D_{j-1}\to\C^2$ such that $|\varphi-f_{j-1}|<2\epsilon_j$ on $D_{j-1}$ is an embedding and satisfies $\varphi(D_{j-1})\subset X$ and $\dist_{\varphi}(0,bD_{j-1})>j-1$.
\end{enumerate}
The basis of the induction is provided by the already chosen disc $D_0=\overline\D$, 
point $\alpha_0=0\in\mathring D_0$, holomorphic embedding $f_0:D_0\to X$, and number 
$\epsilon_0>0$. They meet conditions {\rm (i$_0$)}, {\rm (iii$_0$)}, and {\rm (iv$_0$)}, 
while {\rm (ii$_0$)} and {\rm (v$_0$)} are void. 
For the inductive step, fix $j\in\N$ and assume that we have suitable objects $D_k$, 
$\alpha_k$, $f_k$, and $\epsilon_k$ satisfying {\rm (i$_k$)}, {\rm (iii$_k$)}, and {\rm (iv$_k$)} for all $k\in\{0,\ldots,j-1\}$. By {\rm (iv$_{j-1}$)} we can choose a number $\epsilon_j>0$ 
so small that {\rm (v$_j$)} is satisfied.
Reasoning as in Case 1 in the proof of Theorem \ref{th:pseudoconvex}, we may assume that there are a smoothly bounded closed disc $\Sigma\subset 2\D$ and a point $\alpha_j\in \Sigma$ such that $f_{j-1}$ extends to a holomorphic embedding $f_{j-1}:\Sigma\to X$ with
\begin{equation}\label{eq:aj-New}
	\text{$\alpha_j\in \mathring \Sigma\supset D_{j-1}$ \; and \; $f_{j-1}(\alpha_j)=\beta_j$.}
\end{equation}
Moreover, $\Sigma$ can be chosen as close to $D_{j-1}$ as desired.

Since $\Sigma$ is a disc and $f_{j-1}:\Sigma\to\C^2$ is holomorphic, 
the compact set $f_{j-1}(\Sigma)\subset X$ is polynomially convex in $\C^2$ 
%
%
by a theorem of Wermer \cite{Wermer1958}
(see also Stolzenberg \cite{Stolzenberg1966AM} and Alexander \cite{Alexander1971}), 
so it admits a basis of open neighbourhoods which 
are smoothly bounded, strongly pseudoconvex, and Runge in $\C^2$. 
(Indeed, a compact polynomially convex set $K\subset \C^n$ is the zero set of a 
smooth plurisubharmonic exhaustion function $\rho\ge 0$ on $\C^n$ 
\cite[Theorem 1.3.8]{Stout2007}, and every sublevel set $\{\rho<c\}$ for $c>0$ of such 
a function is a pseudoconvex Runge domain in $\C^n$ by 
\cite[Theorem 4.3.4]{Hormander1990}.) 
%
%
Let $U_1\Subset U_2\Subset X$ 
be a pair of such
%
%
relatively compact neighbourhoods of $f_{j-1}(\Sigma)$ in $X$. 
By \cite[Lemma 2.4]{CharpentierKosinski2020} there is a compact polynomially convex set 
$\Gamma\subset U_2\setminus\overline U_1$ whose connected components are 
holomorphically contractible such that $\overline U_1\cup \Gamma$ is polynomially convex 
and 
$\length(\gamma)>1$ for every path 
$\gamma:[0,1]\to X\setminus\Gamma$ with $\gamma(0)\in \overline U_1$ and 
$\gamma(1)\in X\setminus \overline U_2$. 
Since $D_{j-1}$ is a Runge compact 
in $\mathring\Sigma$ and $f_{j-1}(\Sigma)\cap \Gamma=\varnothing$, 
Lemma \ref{lem:main} furnishes a holomorphic embedding 
$f_j:\Sigma\hra \C^2$ satisfying the following conditions:
\begin{enumerate}[\rm (a)]
\item $f_j(b\Sigma)\cap \overline U_2=\varnothing$; 
%
%
recall that $U_2$ is compact.
\item $f_j(\Sigma\setminus\mathring D_{j-1})\cap \Gamma=\varnothing$.
%
%
\item $|f_j-f_{j-1}|<\epsilon_j$ on a smoothly bounded closed disc $\Sigma'$ with $D_{j-1}\cup\{\alpha_j\}\Subset\Sigma'\Subset \Sigma$.
\item $f_j(\alpha_k)=f_{j-1}(\alpha_k)$ for all $k\in\{0,\ldots,j\}$.
\end{enumerate}
Further, 
%
%
%
%
choosing $\epsilon_j>0$ sufficiently small, condition (c) implies that
\begin{equation}\label{eq:fjDj-1-New}
	f_j(D_{j-1})\subset f_j(\Sigma')\subset U_1;
\end{equation} 
recall that $U_1$ is an open neighbourhood of $f_{j-1}(\Sigma)$.
Since the domain 
$U_2$ is Runge in $\C^2$, 
this and conditions (a) and (c) guarantee the existence of a smoothly bounded closed disc 
$D_j\subset\mathring \Sigma\subset 2\D$, 
%
%
with $\Sigma'\subset \mathring D_j$ (this implies (i$_j$)), satisfying 
$f_j(D_j)\subset X$ and $f_j(bD_j)\cap\overline U_2=\varnothing$.
This, \eqref{eq:fjDj-1-New}, (b), and the properties of $\Gamma$ imply 
$\dist_{f_j}(D_{j-1},bD_j)>1$, while (c)=(ii$_j$) and (v$_j$) ensure that 
$\dist_{f_j}(0,bD_{j-1})>j-1$, so (iv$_j$) holds. Finally, (d), (iii$_{j-1}$), 
and \eqref{eq:aj-New} imply (iii$_j$). This closes the induction.

Note that $M=\bigcup_{j\in\N} D_j\subset 2\D$ is an open disc 
%
%
which is 
%
%
diffeotopic to $S=2\D$. By conditions (i$_j$), (ii$_j$), 
and (v$_j$), there is a limit holomorphic map $F=\lim_{j\to\infty}f_j:M\to\C^2$
such that $|F-f_{j-1}|<2\epsilon_j$ for all $j\in\N$. So, by (v$_j$), $F$ has range in $X$ and 
is a 
complete injective immersion. Finally, conditions (iii$_j$) ensure that $B\subset F(M)$.
%
%
\end{proof}

%
%
\begin{proof}[Proof of Corollary \ref{co:main1}]
Let $G$, $K$, and $\epsilon$ be as in the statement. By Mergelyan theorem (see 
\cite[Theorem 16]{FornaessForstnericWold2020}) there is a holomorphic embedding 
$f_0:\overline\D\to\C^2$ with $|f_0-G|<\epsilon/2$ on $\overline \D$, hence 
$d_{\rm H}(f_0(\overline\D),G(\overline \D))<\epsilon/2$. Let $X\subset\C^2$ be an 
$\epsilon/2$-tubular neighbourhood of $f_0(\overline\D)$. It follows that 
$d_{\rm H}(\overline X,G(\overline \D))<\epsilon$. By the proof of Theorem \ref{th:main} there 
are an open disc $M$ with $\D\Subset M\Subset 2\D$ and a 
complete injective 
holomorphic immersion $h:M\to X$ such that $h(M)$ is everywhere dense in $X$, i.e., 
$\overline{h(M)}=\overline X$. We then have that 
$d_{\rm H}(\overline{h(M)},G(\overline \D))<\epsilon$. Furthermore, an inspection 
of the proof of Theorem \ref{th:main} shows that we can choose $M$ so close to $\D$ that 
there is a holomorphic diffeomorphism $\phi:\D\to M$ satisfying $|h\circ\phi-f_0|<\epsilon/2$ 
on the compact set $K\subset\D$. It is clear that $F=h\circ\phi$ satisfies the conclusion 
of the corollary.
\end{proof}

%
%
\begin{proof}[Proof of Theorem \ref{th:main2}]
Let $B=\{\beta_j\}_{j\in\N}\subset X\subset \C^2$ be as in the statement and 
$S=2\D$. Let $D_0$, $f_0$, $\alpha_0$, $\beta_0$, $D_{-1}$, and $\epsilon_0$ be as in the proof of Theorem \ref{th:main}. Choose an exhaustion
\begin{equation}\label{eq:K0-New}
	\varnothing = K_0\Subset K_1\Subset K_2\Subset\cdots\subset\bigcup_{j\in\N}K_j=X
\end{equation}
of $X$ by compact domains. We shall inductively construct an increasing 
sequence of connected, smoothly bounded compact domains $D_j\subset 2\D$, 
as well as sequences of points $\alpha_j\in \mathring D_j$, holomorphic embeddings 
$f_j: D_j\to X$, and numbers $\epsilon_j>0$ satisfying conditions (i$_j$)--(v$_j$) 
in the proof of Theorem \ref{th:main} and also the following one for all $j\in\N=\{1,2,\ldots\}$:
%
%
\begin{enumerate}
\item[\rm (vi$_j$)] $f_j(bD_j)\cap K_j=\varnothing$.
\end{enumerate}
(Unlike in the proof of Theorem \ref{th:main}, $D_j$ need not be a disc.) 
Note that (vi$_0$) holds true. For the inductive step, fix $j\in\N$ and assume that we have 
suitable objects $D_k$, $a_k$, $f_k$, and $\epsilon_k$ satisfying {\rm (i$_k$)}, {\rm (iii$_k$)}, 
{\rm (iv$_k$)}, and (vi$_k$) for all $k\in\{0,\ldots,j-1\}$. 
%
%
Choose $\epsilon_j>0$ so small that 
{\rm (v$_j$)} holds. Reasoning as in the proof of Theorem \ref{th:main} we may assume that 
$f_{j-1}$ extends to a holomorphic embedding $f_{j-1}:\Sigma\to X$ on a connected, 
smoothly bounded compact domain 
$\Sigma \subset 2\D$  such that 
$D_{j-1}\subset \mathring \Sigma$, $D_{j-1}$ is a strong deformation retract of $\Sigma$, 
and there is a point $\alpha_j\in\mathring \Sigma$ with $f_{j-1}(\alpha_j)=\beta_j$. By a small perturbation 
of the map $f_{j-1}$ keeping the above conditions in place, we can ensure in addition that 
$f_{j-1}(\Sigma)$ is polynomially convex in $\C^2$; see the argument in the first part of the 
proof of Lemma \ref{lem:main} based on Stolzenberg's 
theorem \cite{Stolzenberg1963AM}. We can therefore choose a pair of smoothly bounded,
relatively compact, pseudoconvex domains $U_1\Subset U_2\Subset X$ 
%
%
which are Runge in $\C^2$ such that $f_{j-1}(\Sigma)\subset U_1$. 
%
%
We place a suitable labyrinth $\Gamma=\Gamma_j$ in $U_2\setminus \overline U_1$ and
choose a holomorphic embedding $f_j:\Sigma\to\C^2$ as in the proof of 
Theorem \ref{th:main} with condition (a) replaced by
\begin{enumerate}[\rm (a')]
\item $f_j(b\Sigma)\cap (K_j\cup \overline U_2)=\varnothing$.
\end{enumerate} 
In particular, conditions (b)--(d) and \eqref{eq:fjDj-1-New} in the proof of Theorem \ref{th:main} are 
satisfied
%
%
(in this case $\Sigma'$ is a smoothly bounded compact domain in $\mathring \Sigma$ containing $D_{j-1}\cup\{\alpha_j\}$ in its interior and such that $D_{j-1}$ is a strong deformation retract of $\Sigma'$). In view of {\rm (a')} and the mentioned conditions, there is a connected, smoothly
bounded compact domain $D_j\subset\mathring\Sigma\subset 2\D$ such that 
\[
\text{$D_{j-1}\cup\{\alpha_j\}\subset\mathring D_j$, \; $f_j(D_j)\subset X$, \; and \; 
$f_j(bD_j)\cap (K_j\cup \overline U_2) =\varnothing$.} 
\]
(The last condition will be the key to ensure almost properness of the limit map; compare with 
the 
condition $f_j(bD_j)\cap\overline U_2=\varnothing$ in the proof 
of Theorem \ref{th:main}. In general, since the domain $X$ need not be pseudoconvex
%
%
and Runge in $\C^2$, we cannot choose $D_j$ so that $D_{j-1}$ is a strong deformation 
retract of $D_j$; possibly $D_j$ has more boundary components than $D_{j-1}$.) 
Then, conditions (i$_j$)--(vi$_j$) hold true, which closes the induction.

Set $M=\bigcup_{j\in\N}D_j\subset 2\D$, a connected relatively compact domain in 
$\C$. By the reasoning in the proof of Theorem \ref{th:main}, there is a limit map 
$F=\lim_{j\to\infty} f_j:M\to X$ which is a complete injective holomorphic 
immersion such that $B\subset F(M)$. Finally, condition (vi$_j$) ensures that $F$ is an 
almost proper map provided that each number $\epsilon_j>0$ in the inductive process is 
chosen sufficiently small. Indeed, by such a choice (similar to that in (v$_j$)) we can manage 
to grant that $F(bD_j)\cap K_j=\varnothing$ for all $j\in\N$; see (ii$_j$) and (vi$_j$). 
%
%
This, (i$_j$), and \eqref{eq:K0-New} imply the almost properness of $F:M\to X$.
\end{proof}

%
%
%
%
\smallskip
\noindent {\bf Acknowledgements.} Alarc\'on is partially supported by the State Research Agency (AEI) via the grant no.\ PID2020-117868GB-I00 and the ``Maria de Maeztu'' Excellence Unit IMAG, reference CEX2020-001105-M, funded by MCIN/AEI/10.13039/ 501100011033/; and the Junta de Andaluc\'ia grant no. P18-FR-4049; Spain.

Forstneri\v c is supported by the 
European Union (ERC Advanced grant HPDR, 101053085) and grants P1-0291, J1-3005, and N1-0237 from ARRS, Republic of Slovenia.




\end{document}